\newtheorem{theorem}{Theorem}[section]
\newtheorem{lemma}[theorem]{Lemma}
\newtheorem{definition}[theorem]{Definition} 
\newtheorem{remark}[theorem]{Remark}  
\numberwithin{equation}{section}
\def\an    {{A^\natural}}  
\def\anb   {\overline{\an}}
\def\ane   {{A^\natural_*}} 
\def\ab    {\overline a}
\def\be    {\beta}  
\def\ban   {\underline{\an}}
\def\bmu   {\underline{\mu}} 
\def\br    {\underline{r}}
\def\del   {{\partial}}
\def\eps   {\varepsilon}
\def\lam        {\lambda} 
\def\Lamalpha   {\Lambda_\alpha}
\def\lna   {\lambda^\natural} 
\def\lz    {\lambda_0}
\def\mub   {\overline{\mu}}
\def \RR   {\mathbb{R}}
\def\rb    {\overline r}
\def\sgn   {\text{\rm sgn}}  
\def\SS    {{\mathcal S}}
\def\talpha {\tilde\alpha}
\def\vf    {{\varphi^\flat}}  
\def\vfalpha  {{\varphi^\flat_\alpha}}
\def\vfz   {{\varphi^\flat_0}}
\def\vmn   {\varphi^{-\natural}}
\def\vn    {{\varphi^\natural}}
\def\vsalpha {{\varphi^\sharp_\alpha}}  
\def\vsz   {{\varphi^\sharp_0}}
\def \be   {\begin{equation}}
\def \ee   {\end{equation}}
\begin{document}

\title{Existence and qualitative properties of kinetic functions generated by 
 diffusive-dispersive regularizations}

\author{ 
Philippe G. LeFloch\footnote{Laboratoire Jacques-Louis Lions \& Centre National de la Recherche Scientifique, 
Universit\'e Pierre et Marie Curie (Paris 6), 4 Place Jussieu, 75252 Paris, France. E-mail : {\sl pgLeFloch@gmail.com.} 
Blog: {\sl http://PhilippeLeFloch.wordpress.com} 
\newline 
2000\textit{\ AMS Subject Classification.} 35l65, 76N10, 35B45. 
\textit{Key Words and Phrases.} Hyperbolic conservation law, shock wave, 
diffusion, dispersion,  traveling wave, kinetic relation, undercompressive shock. 
}
}
\date{May 14, 2010}
\maketitle

\begin{abstract} We investigate the properties of 
traveling wave solutions to hyperbolic conservation laws augmented with diffusion and dispersion, and 
review the existence and qualitative properties of the associated kinetic functions, 
which characterize the class of admissible shock waves selected by such regularizations. 
\end{abstract} 

\tableofcontents
  

\section{Introduction}
\label{IN}

In this paper, we investigate the properties of
 traveling wave solutions to hyperbolic conservation laws augmented with diffusion and dispersion, and 
reviews the existence and qualitative properties of the associated kinetic functions. Such a function 
 characterizes the class of admissible shock waves, both compressive and undercompressive,
  selected by a given regularization. 
Building on the pioneering papers \cite{Slemrod1,Truskinovsky2,AbeyaratneKnowles2,LeFloch-ARMA}, 
the mathematical research on undercompressive shocks generated by diffusive-dispersive limits 
developed intensively in the last fifteen years. 
For background on this topics and further material, 
we refer the reader to the reviews \cite{LeFloch-Freiburg,LeFloch-book,LeFloch-Oslo} and the extensive literature cited therein. 
The present review restrict attention to  traveling waves and to a class of scalar equations. 

The kinetic relation can be defined as follows. Recall that classical compressive shocks with a given left-hand state $u_-$
(and wave family, when systems of equations are considered) 
form a one-parameter family of solutions, parametrized by their right-hand state $u_+$. By contrast, 
given any left-hand state $u_-$ (and wave family), there typically exists a single undercompressive shock, and 
the kinetic function $\varphi^\flat$ precisely determines the right-hand state 
$$
u_+ = \varphi^\flat(u_-)
$$
 as a function of the left-hand side.

The fundamental questions of interest are the following ones: 
do there exist traveling wave solutions associated with classical and/or with 
nonclassical shock waves~? 
Can one associate a kinetic function to the given model~?
If so, is this kinetic function monotone ? What is the behavior of arbitrarily small shocks~? 
How does the kinetic function depend upon the parameters?
 
Answers to these questions were obtained first for the cubic flux function, 
by deriving {\sl explicit} formulas for the kinetic function in 
 Shearer et al. \cite{JMS} and 
Hayes and LeFloch \cite{HayesLeFloch-scalar}. 
General flux-functions and general regularization were covered
 by Bedjaoui and LeFloch in the series of papers \cite{BedjaouiLeFloch1}--\cite{BedjaouiLeFloch5}. 
 
 More generally, the existence and properties of traveling waves for 
the nonlinear elasticity and the Euler equations 
are known in both the hyperbolic \cite{SchulzeShearer,BedjaouiLeFloch2} 
and the hyperbolic-elliptic regimes \cite{Truskinovsky2,ShearerYang,Benzoni,BedjaouiLeFloch3}.
For all other models, only partial results on traveling waves are available. 
 
The existence of nonclassical traveling wave solutions for the thin liquid film model is proven 
by Bertozzi and Shearer in \cite{BertozziShearer}. 
For this model, no qualitative information on the properties of these traveling waves is known, 
and, in particular, the existence of the kinetic relation has not been rigorously established yet.
The kinetic function was recently determined numerically in LeFloch and Mohamadian \cite{LeFlochMohamadian}. 
For the $3\times 3$ Euler equations, we refer to \cite{BedjaouiLeFloch5}.

Finally, we also recall that the Van de Waals model admits {\sl two} inflection points and leads 
to {\sl multiple} traveling wave solutions. Although the physical significance
of the ``second'' inflection point is questionable, 
given that this model 
is extensively used in the applications it is important to investigate whether additional features
arise. Indeed, it is established in
\cite{BCCL} that {\sl non-monotone nonclassical} traveling wave profiles exist, and that 
a single kinetic function is not sufficient to single out the physically relevant solutions. 

An outline of this paper is as follows. In Section~2, we briefly discuss the case of the diffusion model, 
while the rest of the paper is concerned with the diffusion-dispersion model. 
We then begin with the case of a cubic flux-function for which explicit formulas can be derived. 
The main results are stated in Section~4 for general flux-functions having one inflection point. 
Sections~5 and 6 are concerned with the derivations of key properties of the traveling waves and kinetic function, 
corresponding to a fixed shock speed and to a fixed diffusion over dispersion ratio, respectively.  

\section{Traveling waves associated with the nonlinear diffusion model}

Consider the scalar conservation law 
\be
\del_t u + \del_x f(u) = 0, \quad  u=u(x,t) \in \RR, 
\label{1.1} 
\ee
where $f:\RR \to \RR$ is a smooth mapping. We begin, in this section, with the {\sl 
nonlinear diffusion model} 
\be
\del_t u + \del_x f(u) = \eps \, \bigl(b(u) \, u_x \bigr)_x, 
\quad u = u^\eps(x,t) \in \RR, 
\label{1.2} 
\ee
where $\eps>0$ is a small parameter. The {\sl diffusion function\/}  
$b: \RR \to \RR_+$ is assumed to be smooth and bounded below: 
\be
b(u) \geq \bar b >0, 
\label{1.3} 
\ee
so that the equation \eqref{1.2} is {\sl uniformly parabolic}. 
We are going to establish that the shock set associated with 
the traveling wave solutions of \eqref{1.2} coincides with the one 
described by Oleinik entropy inequalities (see \eqref{1.9}, below).

Recall that a {\sl traveling wave} of \eqref{1.2} is a solution depending 
only upon the variable 
\be
y := {x - \lam \, t \over \eps}  
\label{1.4} 
\ee
for some constant speed $\lam$. 
Note that, after rescaling, 
the corresponding {\sl trajectory} $y \mapsto u(y)$ is independent of the parameter 
$\eps$.  
Fixing the left-hand state $u_-$ we search for traveling waves of 
\eqref{1.2} {\sl connecting $u_-$ to some state $u_+$}, 
that is, solutions $y \mapsto u(y)$ of the ordinary differential equation 
\be 
- \lam \, u_y + f(u)_y = \bigl( b(u) \, u_y \bigr)_y   
\label{1.5} 
\ee
satisfying the boundary conditions
\be 
\lim_{y \to -\infty} u(y) = u_-, 
\quad 
\lim_{y \to +\infty} u(y) = u_+, 
\quad 
\lim_{|y| \to +\infty} u_y(y) =  0. 
\label{1.6}   
\ee 

In view of \eqref{1.6} the equation \eqref{1.5} can be integrated once: 
\be
b(u(y)) \, u_y(y) = -\lam \, (u(y) - u_-) + f(u(y)) - f(u_-), \quad 
y \in \RR. 
\label{1.7}  
\ee
The Rankine-Hugoniot condition 
\be
- \lam \, (u_+ - u_-) + f(u_+) - f(u_-) = 0 
\label{1.8}  
\ee
follows by letting $y \to +\infty$ in \eqref{1.7}.  
The equation \eqref{1.7} is an ordinary differential equation 
(O.D.E) on the real line. 
The qualitative behavior of the solutions is easily determined, 
as follows.

\begin{theorem}[Diffusive traveling waves] 
\label{Theorem1.1} 
Consider the scalar conservation law \eqref{1.1} with general flux-function $f$
together with the diffusive model \eqref{1.2}. 
Fix a left-hand state $u_-$ and a right-hand state $u_+ \neq u_-$.  
Then, there exists a traveling wave of $\eqref{1.7}$ associated with 
the nonlinear diffusion model \eqref{1.2} if and only if $u_-$ and $u_+$
satisfy {\sl Oleinik entropy inequalities in the strict sense}, that 
is: 
\be
{f(v) - f(u_-) \over v - u_-} >  
{f(u_+) - f(u_-) \over u_+ - u_-} 
\quad 
\text{ for all } v \text{ lying strictly between $u_-$ and $u_+$.} 
\label{1.9} 
\ee
\end{theorem}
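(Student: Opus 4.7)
The plan is to reduce the problem to a scalar autonomous ODE and do a phase-line analysis. Using the Rankine--Hugoniot condition \eqref{1.8}, set
$$
g(u) := f(u) - f(u_-) - \lambda\,(u - u_-),
$$
so that $g(u_-) = g(u_+) = 0$ and \eqref{1.7} becomes the autonomous scalar ODE $u_y = g(u)/b(u)$. Since $b(u) \geq \bar b > 0$ by \eqref{1.3}, the sign of $u_y$ agrees with the sign of $g(u)$, and the equilibria are exactly the zeros of $g$. Also, the strict Oleinik condition \eqref{1.9} is equivalent to the statement that $g(v)$ has the sign of $u_+ - u_-$ for every $v$ strictly between $u_-$ and $u_+$; I would record this equivalence as a first observation, obtained by rewriting \eqref{1.9} as $f(v) - f(u_-) > \lambda\,(v - u_-)$ and distinguishing the cases $u_- < u_+$ and $u_+ < u_-$.

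For the sufficiency direction, assume \eqref{1.9} holds; I treat only $u_- < u_+$, the other case being symmetric. Then $g > 0$ on the open interval $(u_-, u_+)$, and $u_\pm$ are the only equilibria of the ODE in $[u_-, u_+]$. Picking any $u_0 \in (u_-, u_+)$ and solving $u_y = g(u)/b(u)$ with $u(0) = u_0$, the solution is strictly increasing, remains in $(u_-, u_+)$ by uniqueness (it cannot cross the equilibria $u_\pm$), and hence extends to all $y \in \RR$. The monotone bounded limits $\lim_{y \to \pm \infty} u(y)$ exist and must be equilibria, and the only available choices are $u_-$ at $-\infty$ and $u_+$ at $+\infty$. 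The boundary condition on $u_y$ in \eqref{1.6} then follows from \eqref{1.7} and the vanishing of $g$ at $u_\pm$.

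For the necessity direction, suppose a traveling wave connecting $u_-$ to $u_+$ exists. Again take $u_- < u_+$. By \eqref{1.6} and $u_- \neq u_+$, the trajectory $u(y)$ must enter the open interval $(u_-, u_+)$. Once there, at any point $y_0$ where $u(y_0) = v \in (u_-, u_+)$, the value $u(y_0)$ cannot be an equilibrium, for otherwise the constant solution $\equiv v$ would coincide with our trajectory by uniqueness and could not reach $u_\pm$. Hence $g(v) \neq 0$ for every $v \in (u_-, u_+)$ that is attained by the orbit. Because $u(y) \to u_-$ as $y \to -\infty$ and $u(y) \to u_+$ as $y \to +\infty$, the orbit must sweep out the entire interval $(u_-, u_+)$ by the intermediate value theorem, so $g$ is nonvanishing on $(u_-, u_+)$. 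By continuity $g$ has constant sign there, and the sign must be positive, since otherwise $u_y < 0$ on the orbit and the trajectory would move away from $u_+$. This yields \eqref{1.9}.

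The argument is essentially routine phase-line analysis; the only subtle point, which I would emphasize, is translating \eqref{1.9} cleanly into the sign condition on $g$ and making sure the direction of monotonicity is consistent with the ordering of $u_-$ and $u_+$ in both cases.
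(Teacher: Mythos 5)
Your proposal is correct and follows essentially the same route as the paper: the paper's (very brief) proof also rewrites \eqref{1.7} as the scalar autonomous ODE \eqref{1.10}, observes that the strict Oleinik inequalities are exactly the condition that the right-hand side keeps a constant sign on the interval between $u_-$ and $u_+$, and concludes that the solution exists and connects the two states monotonically. You have simply filled in the phase-line details (global existence, convergence to equilibria, and the uniqueness argument for necessity) that the paper leaves implicit.
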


\begin{proof} 
All the trajectories of 
interest are bounded, i.e., cannot escape to infinity. 
Namely, the shock profile satisfies the equation
\be
u' = {u-u_- \over b(u)} \, \left(
     {f(u) - f(u_-) \over u - u_-} 
   - {f(u_+) - f(u_-) \over u_+ - u_-}\right). 
\label{1.10}
\ee
It is not difficult to see that the solution exists and connects 
monotonically $u_-$ to $u_+$ provided Oleinik entropy inequalities hold
and the right-hand side of \eqref{1.10} keeps (strictly) a constant sign 
(except at the end point $y= \pm \infty$ where it vanishes).  
\end{proof}

We define the {\sl shock set\/} associated with the nonlinear diffusion model 
as 
$$
S(u_-) := \bigl\{ u_+ \, / \, \text{ there exists a solution of } \eqref{1.6}--\eqref{1.8} \bigr\}. 
$$ 
From Theorem~\ref{Theorem1.1} one can deduce the following.

\begin{theorem}[Shock set based on diffusive limits] 
\label{Theorem1.2}  
Consider the scalar conservation law \eqref{1.1} when 
the flux $f$ is convex, concave-convex, or convex-concave (see \eqref{3.2}, below).   
Then, for any $u_-$, the shock set $S(u_-)$ associated with the 
nonlinear diffusion model \eqref{1.2} and \eqref{1.3} is independent of 
the diffusion function $b$, and the closure of $S(u_-)$ 
coincides with the shock set characterized 
by Oleinik entropy inequalities (or, equivalently, Lax shock inequalities). 
\end{theorem}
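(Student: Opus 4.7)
The plan is to read off both assertions directly from Theorem~\ref{Theorem1.1}. That result already provides the identification
\[
S(u_-) = \bigl\{ u_+ \neq u_- \, : \, (u_-,u_+) \text{ satisfies the strict Oleinik inequalities \eqref{1.9}} \bigr\},
\]
and since the right-hand side refers only to $f$, $u_-$, and $u_+$, the set is manifestly independent of the diffusion function $b$ as long as the uniform parabolicity bound \eqref{1.3} holds (which is exactly the hypothesis of Theorem~\ref{Theorem1.1}). This delivers the independence assertion with no further work.

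For the closure statement, I would proceed in two steps. First, I would show that $\overline{S(u_-)}$ equals the closed Oleinik set, i.e.\ the set of $u_+$ satisfying \eqref{1.9} with $\geq$ in place of $>$. Setting $s(u_+) := (f(u_+) - f(u_-))/(u_+ - u_-)$ and $\sigma(v) := (f(v) - f(u_-))/(v - u_-)$, the strict inequality \eqref{1.9} defines an open subset of $u_+$-space. Any $u_+$ for which $\sigma(v) = s(u_+)$ holds at some interior point $v$ can be approached along a one-parameter family of nearby states for which the strict inequality is restored, by continuity of $s$ and $\sigma$ in their arguments together with the smoothness of $f$; hence the open set is dense in the closed Oleinik set.

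Second, I would verify, under each of the three hypotheses on $f$ (convex, concave-convex, or convex-concave in the sense of \eqref{3.2}), that the closed Oleinik set coincides with the Lax shock set. For convex flux this is a classical consequence of the monotonicity of the secant slope, yielding $f'(u_-) \geq s \geq f'(u_+)$ iff Oleinik holds. For concave-convex and convex-concave flux with one inflection point, I would carry out a direct graphical comparison between the chord joining $(u_-,f(u_-))$ and $(u_+,f(u_+))$ and the graph of $f$ on each monotonicity branch of $f'$; the case analysis reduces to whether $u_-$ and $u_+$ lie on the same branch or are separated by the inflection, and in every configuration the chord-vs-graph inequality of Oleinik matches the characteristic-speed inequality of Lax.

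The step that will require the most care is the boundary of the admissible region in the non-convex cases, where the chord becomes tangent to the graph at an interior point: this is precisely the locus of contact-like shocks separating the classical regime from the undercompressive regime, and one must check both that such tangency configurations lie in the closure of $S(u_-)$ (which follows from a limiting argument applied to the monotone profiles constructed in Theorem~\ref{Theorem1.1}) and that they are also the boundary of the Lax region. With that verification in hand, combining the two steps gives the asserted identification of $\overline{S(u_-)}$ with both the Oleinik and Lax shock sets.
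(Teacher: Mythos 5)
Your proposal is correct and follows the same route the paper intends: the paper offers no written proof of Theorem~\ref{Theorem1.2} beyond the remark that it is deduced from Theorem~\ref{Theorem1.1}, and your three steps (independence of $b$ because the criterion \eqref{1.9} involves only $f$; density of the strict Oleinik set in the non-strict one for the three flux classes; equivalence of Oleinik and Lax for fluxes with at most one inflection) are exactly that deduction, consistent with Remark~\ref{Remark1.3}. One inessential inaccuracy: the strict-inequality set is not open in $u_+$ (for a concave-convex flux with $u_->0$ it is the half-open interval $[\vn(u_-),u_-)$, since the tangency at $u_+=\vn(u_-)$ occurs at the endpoint $v=u_+$ rather than at an interior $v$), but this only strengthens your density claim and does not affect the argument.
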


\begin{remark}
\label{Remark1.3} 
The conclusions of Theorem~\ref{Theorem1.2} do not hold for 
more general flux-functions. This is due to the fact that a strict inequality 
is required in \eqref{1.9} for the existence of the traveling waves.
The set based on traveling waves may be strictly smaller than 
the one based on Oleinik entropy inequalities. 
\end{remark}


\section{Kinetic functions associated with cubic flux-func\-tions}

Investigating traveling wave solutions of diffusive-dispersive regularizations 
of \eqref{1.1} is considerably more involved than what was done in Section~2. 
Besides proving the existence of associated (classical and nonclassical) traveling waves 
our main objective will be to derive the corresponding kinetic functions 
for nonclassical shocks.

To explain the main difficulty and ideas it will
be useful to treat first, in the present section, the specific 
{\sl diffusive-dispersive model with cubic flux}  
\be 
\del_t u + \del_x u^3  
= \eps \, u_{xx} + \delta \, u_{xxx},   
\label{2.1}
\ee
which, formally as $\eps, \delta \to 0$, converges to the 
{\sl conservation law with cubic flux} 
\be 
\del_t u + \del_x u^3 = 0. 
\label{2.2}
\ee 
We are interested in the singular limit $\eps \to 0$ in \eqref{2.1} when the ratio 
\be
\alpha = {\eps \over \sqrt\delta} 
\label{2.3}
\ee
is kept constant. We assume also that the dispersion coefficient $\delta$ 
is positive. Later, in Theorem~\ref{Theorem3.5} below, 
we will see that all traveling waves are classical when ${\delta < 0}$
which motivates us to restrict attention to ${\delta>0}$.

We search for traveling wave solutions of \eqref{2.1} depending on the 
rescaled variable 
\be
y : = \alpha \, {x-\lam \, t \over \eps} = {x-\lam \, t \over \sqrt \delta }. 
\label{2.4}
\ee 
Proceeding along the same lines as those in Section~2 we find that 
a traveling wave $y \mapsto u(y)$ should satisfy   
\be 
- \lam \, u_y + (u^3)_y = \alpha \, u_{yy} + u_{yyy},  
\label{2.5}
\ee
together with the boundary conditions
\be
\aligned 
& \lim_{y\to \pm\infty} u(y) = u_\pm, 
\\
& \lim_{y\to\pm\infty} u_y(y) = \lim_{y\to\pm\infty} u_{yy}(y) = 0, 
\endaligned 
\label{2.6}
\ee 
where $u_- \neq u_+$ and $\lam$ are constants. 
Integrating \eqref{2.5} once we obtain 
\be
\alpha \, u_y(y) + u_{yy}(y) = - \lam \, (u(y) - u_-) + u(y)^3 - u_-^3,
\quad y \in \RR, 
\label{2.7}
\ee
which also implies  
\be
\lam = {u_+^3 - u_-^3 \over u_+ - u_-} =  u_-^2 + u_- \, u_+ + u_+^2.   
\label{2.8}
\ee 

To describe the family of traveling waves 
it is convenient to fix the left-hand state
(with for definiteness $u_- > 0$) 
and to use the speed $\lam$ as a parameter. 
Given $u_-$, there is a range of speeds, 
$$ 
\lam \in (3 \, u_-^2/4, 3 \, u_-^2),  
$$ 
for which the line passing through the point with coordinates $(u_-, u_-^3)$
and with slope $\lam$ intersects the graph of the flux 
$f(u) := u^3$ at three distinct points. 
For the discussion in this section we restrict attention to this 
situation, which is most interesting.  
There exist {\sl three equilibria} at which the 
right-hand side of \eqref{2.7} vanishes. The notation   
$$ 
u_2 < u_1 < u_0 := u_- 
$$ 
will be used, where $u_2$ and $u_1$ are the two distinct roots of the polynomial 
\be 
u^2 + u_0 \, u + u_0^2 = \lam.  
\label{2.9}
\ee
Observe in passing that $u_2 + u_1 + u_0 = 0$.

Consider a trajectory $y \mapsto u(y)$ leaving from $u_-$ at 
$-\infty$. We want to determine which point, among $u_1$ or $u_2$,   
the trajectory will reach at $+\infty$. Clearly, the trajectory 
is associated with a so-called {\sl classical shock} if it reaches $u_1$ 
and with a so-called {\sl nonclassical shock} if it reaches $u_2$.  
Accordingly, we will refer to it as a {\sl classical trajectory}
or as a {\sl nonclassical trajectory}, respectively.


We reformulate \eqref{2.7} as a differential system of two equations,  
\be
{d \over dy} \begin{pmatrix} u \\ v \end{pmatrix} 
= K(u,v), 
\label{2.10}
\ee
where 
\be 
K(u,v) = \begin{pmatrix} v \\ -\alpha \, v + g(u, \lam) -g(u_-, \lam)
\end{pmatrix}, 
\quad 
g(u, \lam) = u^3 - \lam \, u. 
\label{2.11}
\ee
The function $K$ vanishes precisely at the three equilibria  
$(u_0, 0)$, $(u_1, 0)$, and $(u_2, 0)$ of \eqref{2.10}. 
The eigenvalues of the Jacobian matrix of $K(u,v)$ at any point $(u,0)$ are 
$-\alpha/2~\pm~\sqrt{\alpha^2/4~+~g_u'(u, \lam)}$. 
So we set 
\be
\aligned 
\bmu(u) 
& = {1 \over 2} \left(-\alpha - \sqrt{\alpha^2 + 4 \, (3 \, u^2 - \lam)}\right),
\\
\mub(u) 
& = {1 \over 2} \left(-\alpha + \sqrt{\alpha^2 + 4 \, (3 \, u^2 - 
\lam)}\right). 
\endaligned 
\label{2.12}
\ee
At this juncture, we recall the following standard definition and 
result. (See the bibliographical notes for references.)

\begin{definition}[Nature of equilibrium points]
\label{Definition2.1}  
Consider a differential system of the form $\eqref{2.10}$ where $K$ is 
a smooth mapping. Let $(u_*,v_*) \in \RR^2$ be an equilibrium point, 
that is, a root of $K(u_*,v_*)=0$. 
Denote by $\bmu=\bmu(u_*,v_*)$ and $\mub=\mub(u_*,v_*)$ the two 
(real or complex) eigenvalues of the Jacobian matrix of $K$ at 
$(u_*,v_*)$, and suppose that a basis of corresponding 
eigenvectors $\br(u_*,v_*)$ and $\rb(u_*,v_*)$ exists. 
Then, the equilibrium $(u_*,v_*)$ is called 
\begin{enumerate} 
\item a {\sl stable point} if $Re(\bmu)$ and $Re(\mub)$ are both negative,  
\item a {\sl saddle point} if $Re(\bmu)$ and $Re(\mub)$
have opposite sign, 
\item or an {\sl unstable point} if $Re(\bmu)$ and $Re(\mub)$
are both positive. 
\end{enumerate} 
Moreover, a stable or unstable point is called a {\sl node} if   
the eigenvalues are real and a {\sl spiral} if they are complex conjugate. 
\end{definition}

\begin{theorem}[Local behavior of trajectories]
\label{Theorem2.2} 
Consider the differential system $\eqref{2.10}$ under the same assumptions 
as in Definition 2.1. 
If $(u_*,v_*)$ is a saddle point, there are two 
trajectories defined on some interval $(-\infty, y_*)$ and
two trajectories defined on some interval  $(y_*, +\infty)$ 
and converging to $(u_*,v_*)$ at $-\infty$ and $+\infty$, respectively. 
The trajectories are tangent to the eigenvectors $\br(u_*, v_*)$ and $\rb(u_*, v_*)$, 
respectively. 
\end{theorem}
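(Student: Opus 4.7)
The plan is to recognize this as the standard stable/unstable manifold theorem for a hyperbolic saddle equilibrium of a planar smooth vector field. First I would translate so that $(u_*,v_*)=(0,0)$ and write the system as $w' = Aw + N(w)$, where $A = DK(0,0)$ has the real eigenvalues $\bmu < 0 < \mub$ — they must be real in the saddle case, since complex conjugate eigenvalues share the same real part — and $N$ is smooth with $N(0)=0$ and $DN(0)=0$. A linear change of coordinates into the eigenbasis $\br,\rb$ diagonalizes the linear part, yielding
\[
\xi' = \mub\,\xi + N_1(\xi,\eta), \qquad \eta' = \bmu\,\eta + N_2(\xi,\eta).
\]

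For the stable manifold (trajectories converging to the origin as $y\to+\infty$) I would write variation of constants with the boundary condition at $+\infty$ forced into the unstable direction:
\[
\eta(y) = e^{\bmu y}\,\eta_0 + \int_0^y e^{\bmu(y-s)}\,N_2(\xi(s),\eta(s))\,ds,
\]
\[
\xi(y) = - \int_y^{+\infty} e^{\mub(y-s)}\,N_1(\xi(s),\eta(s))\,ds,
\]
the improper integral for $\xi$ being precisely what eliminates the growing linear mode. On the Banach space of continuous maps $[0,+\infty)\to\RR^2$ endowed with the weighted sup-norm $\|(\xi,\eta)\|_\nu := \sup_{y\geq 0} e^{\nu y}(|\xi(y)|+|\eta(y)|)$ for some $0<\nu<\min(|\bmu|,\mub)$, the right-hand side defines a map from a small ball of radius $\rho$ into itself; using $DN(0)=0$, its Lipschitz constant is $O(\rho)$, so it is a strict contraction for $\rho$ small enough. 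The unique fixed point gives, for each sufficiently small $\eta_0$, a trajectory decaying exponentially to $(u_*,v_*)$, with $\xi(y)=O(\eta(y)^2)$ and hence tangent to the stable eigenvector $\rb$. Taking $\eta_0 > 0$ and $\eta_0 < 0$ then produces the two asserted trajectories on some interval $(y_*,+\infty)$.

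The two unstable trajectories are obtained by the symmetric argument on the time-reversed system, or equivalently by integrating the $\eta$-equation improperly from $-\infty$; they live on some $(-\infty,y_*)$ and are tangent to $\br$. The main technical obstacle is the contraction estimate, where one must choose the weight $\nu$ strictly inside the spectral gap, control the improper integral $\int_y^\infty e^{\mub(y-s)}\cdots\,ds$ despite the positive exponent, and exploit the quadratic smallness $DN(0)=0$ to force the Lipschitz constant strictly below one on a sufficiently small invariant ball. The tangency assertion at the equilibrium then follows by comparing the integral formula for $\xi$ with the purely linear decay rate of $\eta$.
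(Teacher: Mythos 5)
The paper does not prove Theorem~2.2 at all: it is explicitly ``recalled'' as a standard result, with references deferred to the bibliographical notes, so there is no argument to compare yours against. Your Lyapunov--Perron outline is the canonical textbook proof of the local stable/unstable manifold theorem for a planar hyperbolic saddle, and it is sound: the observation that saddle eigenvalues are necessarily real, the elimination of the growing mode by the improper integral, the contraction in a weighted sup-norm with weight strictly inside the spectral gap, and the two branches from $\eta_0>0$ and $\eta_0<0$ are all the right ingredients. The only point worth tightening if you were to write it out in full is the tangency step: to get $\xi(y)=o(|\eta(y)|)$ you need both an upper bound $|\xi(y)|\lesssim e^{-2\nu y}$ from the integral formula and a matching lower bound $|\eta(y)|\gtrsim e^{\bmu y}$, which forces the additional constraint $2\nu>|\bmu|$ on the weight (compatible with $\nu<\min(|\bmu|,\mub)$); as stated, ``comparing with the purely linear decay rate of $\eta$'' glosses over the need for that lower bound.
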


Returning to \eqref{2.11} and \eqref{2.12} we conclude that, 
since $g_u'(u, \lam) = 3u^2-\lam$ is positive at both $u=u_2$ and $u=u_0$, 
we have 
$$
\bmu(u_0) < 0 < \mub(u_0), \quad \bmu(u_2) < 0 < \mub(u_2). 
$$ 
Thus both points $u_2$ and $u_0$ are {\sl saddle points.\/} 
On the other hand, since we have ${g_u'(u_1, \lam) < 0}$, the point $u_1$ is 
stable: it is a {\sl node\/} if $\alpha^2 + 4 \, (3 \, u_1^2-\lam) \geq 0$ 
or a {\sl spiral\/} if ${\alpha^2 + 4 \, (3 \, u_1^2 - \lam) < 0}$. 
In summary,  
for the system \eqref{2.10}-\eqref{2.11} 
\be
\aligned 
& \text{$u_2$ and $u_0$ are saddle points and}
\\ 
& \text{$u_1$ is a stable point (either a node or a spiral).} 
\endaligned 
\label{2.13}
\ee


In the present section we check solely that, in some range of 
the parameters $u_0$, $\lam$, and $\alpha$, there exists 
a {\sl nonclassical trajectory\/} connecting the two saddle points $u_0$ 
and $u_2$. {\sl Saddle-saddle connections} are not ``generic'' and, 
as we will show, arise only 
when a special relation (the kinetic relation) holds between 
$u_0$, $\lam$, and $\alpha$ or, equivalently, between 
$u_0$, $u_2$, and $\alpha$; see \eqref{2.15} below. 

For the cubic model \eqref{2.1} an {\sl explicit formula\/} is 
now derived for the nonclassical trajectory. 
Motivated by the fact that the function $g$ in \eqref{2.11} is a cubic, 
we a~priori assume that $v=u_y$ is a {\sl parabola in the variable $u$}. 
Since $v$ must vanish at the two equilibria we write  
\be
v(y) = a \, (u(y) - u_2) \, ( u(y) - u_0), \quad y \in \RR, 
\label{2.14}
\ee
where $a$ is a constant to be determined. 
Substituting \eqref{2.14} into \eqref{2.10}-\eqref{2.11}, we obtain an expression of 
$v_y$: 
$$
\aligned 
v_y & = - \alpha \, v + u^3 - u_0^3 - \lam \, ( u - u_0) \\ 
    & = - \alpha \, v + (u - u_2) \, ( u - u_0) \, ( u + u_0 + u_2) \\ 
    & = v \, \bigl(- \alpha + {1 \over a} \, ( u + u_0 + u_2) \bigr).   
\endaligned 
$$
But, differentiating \eqref{2.14} directly we have also  
$$
\aligned 
v_y & = a \, u_y \, ( 2 \, u - u_0 - u_2) 
\\
    & = a \, v \, ( 2 \, u - u_0 - u_2).  
\endaligned 
$$
The two expressions of $v_y$ above coincide if we choose  
$$
{1 \over a}  = 2 \, a, 
\quad 
- \alpha + {1 \over a} \, (u_0 + u_2) = - a ( u_0 + u_2).  
$$
So, $a = 1/\sqrt{2}$ (since clearly we need $v<0$) 
and the three parameters $u_0$, $u_2$, and $\alpha$ satisfy
the {\sl explicit relation} 
\be
u_2 = - u_0 + {\sqrt{2} \over 3} \, \alpha.
\label{2.15}
\ee 
Since $u_1 = - u_0 - u_2$ we see that the trajectory \eqref{2.14} is 
the saddle-saddle connection we are looking for, only if 
$u_2 < u_1$ as expected, that is, only if 
\be
u_0 > {2 \sqrt{2} \over 3} \, \alpha.
\label{2.16}
\ee

Now, by integrating \eqref{2.14}, it is not difficult to arrive at the following 
{\sl explicit formula for the nonclassical trajectory}:   
\be
\aligned 
u(y) & = {u_0 + u_2 \over 2} - {u_0 - u_2  \over 2} \, 
         \tanh \Big( {u_0 - u_2  \over 2 \sqrt{2} } \, y\Big)
\\
     & = { \alpha \over 3 \sqrt{2} } 
         - \Big(u_- - {\alpha \over 3 \sqrt{2}} \Big) \, 
         \tanh \Big( \Big(u_- - {\alpha \over 3 \sqrt{2}} \Big)  \, {y 
         \over \sqrt{2}} \Big).
\endaligned 
\label{2.17}
\ee
We conclude that, given any left-hand state $u_0  > 2 \sqrt{2}\, 
\alpha/3$, there exists a saddle-saddle connection 
connecting $u_0$ to $- u_0 + \sqrt{2} \, \alpha/3$
which is given by \eqref{2.17}. Later, in Section~4 and followings, 
we will prove that the trajectory just found is actually the 
{\sl only\/} saddle-saddle trajectory leaving from 
${u_0 > 2\sqrt{2} \, \alpha/3}$ and that no such trajectory exists
when $u_0$ is below that threshold.

Now, denote by $\SS_\alpha(u_-)$ the set of all 
right-hand states $u_+$ attainable through a diffusive-dispersive traveling wave
of $\eqref{2.1}$ with $\delta>0$ and $\eps/ \sqrt{\delta} = \alpha$ fixed. 
In the case of the equation \eqref{2.1} the results to be established 
in the following sections can be summarized as follows.  

\begin{theorem}[Kinetic function and shock set for the cubic flux]
\label{Theorem2.3} 
The {\sl kinetic function} associated with the diffusive-dispersive 
model $\eqref{2.1}$ is 
\be
\vfalpha(u_-) 
= \begin{cases}  
 - u_- - \talpha/2,  
   &         u_- \leq - \talpha, 
   \\
   - u_- /2,
   &         |u_-| \leq \talpha, 
   \\ 
 - u_- + \talpha/2,  
   &         u_- \geq \talpha, 
\end{cases}
\label{2.18}
\ee
with $\talpha := 2 \, \alpha \, \sqrt{2} /3$, 
while the corresponding {\sl shock set} is   
\be
\SS_\alpha(u_-) \, = \, \begin{cases} 
(u_-, \talpha/2] \cup \big\{-u_- - \talpha/2 \big\},  
              &   u_-  \leq -\talpha, 
\\ 
[- u_- /2, u_-),     &  - \talpha \leq u_- \leq \talpha, 
\\
\big\{-u_- + \talpha/2 \big\} \cup [- \talpha/2, u_-),  
              &   u_-  \geq \talpha. 
\end{cases} 
\label{2.19}
\ee
\end{theorem}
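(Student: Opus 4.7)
The plan is to extract both the kinetic function and the shock set directly from the phase-plane analysis developed around \eqref{2.10}--\eqref{2.17}, and to exploit the $u\mapsto -u$ symmetry of \eqref{2.1} (which preserves the equation and reverses the $y$-variable in \eqref{2.7}) to reduce to $u_-\geq 0$. For the nonclassical branch of $\vfalpha$ I would appeal directly to the explicit parabolic orbit \eqref{2.14}--\eqref{2.17}: whenever \eqref{2.16} holds, i.e.\ $u_-\geq\talpha$, this orbit furnishes a heteroclinic connection between the two saddles $u_-$ and $-u_-+\talpha/2$, which immediately yields $\vfalpha(u_-)=-u_-+\talpha/2$; by the $u\mapsto-u$ symmetry, the branch $\vfalpha(u_-)=-u_--\talpha/2$ for $u_-\leq-\talpha$ follows.

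For the classical part of $\SS_\alpha(u_-)$ I would fix $u_->0$ and parametrize candidate right states by the shock speed $\lambda$ through \eqref{2.8}. For each admissible $\lambda$ the one-dimensional unstable manifold at the saddle $u_-$, descending into the half-plane $v<0$, follows a single trajectory which either lands on the stable equilibrium $u_1$ (giving a classical shock with $u_+=u_1$) or overshoots $u_1$ and is absorbed by the lower saddle $u_2$. The boundary between these two regimes is precisely the unique $\lambda$ at which \eqref{2.14} is a saddle--saddle connection, namely the value determined by \eqref{2.15}; for $u_-\geq\talpha$ this corresponds to $u_1=-\talpha/2$, which truncates the classical interval to $[-\talpha/2,u_-)$ and leaves the isolated nonclassical point $-u_-+\talpha/2$. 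For $0\leq u_-\leq\talpha$, the parabola \eqref{2.14} no longer represents a genuine saddle--saddle connection (the ordering $u_2<u_1$ required in \eqref{2.16} fails, and the formal endpoint $-u_-+\talpha/2$ coincides with, or lies above, the middle stable equilibrium); no truncation occurs, the classical interval fills the entire Oleinik range $[-u_-/2,u_-)$, and $\vfalpha(u_-)$ is accordingly defined to equal the chord-tangency value $-u_-/2$. Assembling these pieces with the symmetry then produces \eqref{2.18} and \eqref{2.19}.

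The main obstacle is the rigorous global tracking of the unstable separatrix at the saddle $u_-$: showing that as $\lambda$ varies there is a \emph{single} transition value at which the landing point of the separatrix switches from the stable equilibrium $u_1$ to the lower saddle $u_2$, and that for $0<u_-<\talpha$ no such transition (hence no saddle--saddle connection) exists at all. The parabolic ansatz \eqref{2.14} exhibits one saddle--saddle orbit by explicit integration, but does not by itself rule out additional saddle--saddle orbits, nor does it locate the destination of the separatrix for the remaining values of $\lambda$. The monotonicity and uniqueness arguments that settle these points are precisely what is developed in Sections~4 and following for general flux-functions, and I would invoke those general results here rather than attempt an ad hoc proof tailored to the cubic case.
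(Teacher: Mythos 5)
Your proposal is correct and takes essentially the same route as the paper: the paper likewise extracts the nonclassical branch from the explicit parabolic orbit \eqref{2.14}--\eqref{2.17} via the relation \eqref{2.15}, and explicitly defers the uniqueness of the saddle--saddle connection and the determination of the classical range to the general results proved later (Theorems \ref{Theorem4.1}, \ref{Theorem5.1}, and \ref{Theorem5.2}), so that Theorem~\ref{Theorem2.3} is presented as a specialization of those results to the cubic flux. One minor correction: the reduction to $u_-\ge 0$ uses the map $u\mapsto -u$ alone, which already preserves \eqref{2.5} and \eqref{2.7} with the same speed $\lam$ and the same orientation of $y$; composing it with $y\mapsto -y$ would reverse the sign of the diffusive term.
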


In agreement with the general theory  
of the kinetic function,  
\eqref{2.18} is monotone decreasing and lies between the limiting functions 
$\vn(u):= -u/2$ and $\vfz(u) := -u$. Depending on $u_-$ the shock set 
can be either an interval or the union of a point and an interval. 
 

Consider next the {\sl entropy dissipation} associated with the nonclassical shock:   
\be
\aligned 
E(u_-; \alpha, U) 
:= & - \bigl(\vfalpha(u_-)^2 + \vfalpha(u_-) \, 
       u_- + u_-^2\bigr) \, \bigl(U(\vfalpha(u_-)) - U(u_-) \bigr) 
       \\
   & + F(\vfalpha(u_-)) - F(u_-), 
\endaligned 
\label{2.20}
\ee 
where $(U, F)$ is any convex entropy pair of the equation \eqref{2.2}. 
By multiplying \eqref{2.5} by $U'(u(y))$ and 
integrating over $y \in \RR$ we find the equivalent expression  
\be
\aligned 
E(u_-; \alpha, U) 
& = \int_\RR U'(u(y)) \, \big( \alpha \, u_{yy}(y) + u_{yyy}(y) \big) \, dy 
\\ 
& = \int_\RR \bigl( - \alpha \, U''(u) \, u_y^2 
+ U'''(u) \, u_y^3 / 2 \bigr) \, dy. 
\endaligned
\label{2.21}
\ee
So, the sign of the entropy dissipation can also be determined from 
the explicit form \eqref{2.17} of the traveling wave.

\begin{theorem}[Entropy inequalities]
\label{Theorem2.4}  
\begin{enumerate}
\item For the quadratic entropy 
$$
U(z) = z^2/2, \quad z \in \RR,
$$ 
the entropy dissipation
${E(u_-; \alpha, U)}$ is non-positive for all real $u_-$ and all 
${\alpha \geq 0}$. 
\item For all convex entropy $U$ 
the entropy dissipation $E(u_-; \alpha, U)$ is non-positive for all  
$\alpha>0$ and all $|u_-| \leq 2 \sqrt{2} \, \alpha/3$. 
\item Consider $|u_-| >2 \sqrt{2} \, \alpha/3$ and any (convex) entropy 
$U$ whose third derivative is sufficiently small, specifically  
\be
\bigl(|u_-| - \alpha/ (3 \sqrt{2}) \bigr)^2 \, | U'''(z) | 
\leq 2 \alpha \, \sqrt{2} \, U''(z),  \quad z \in \RR.  
\label{2.22}
\ee
Then, the entropy dissipation $E(u_-; \alpha, U)$ is also non-positive.  
\item Finally given any $|u_-| > 2 \sqrt 2 \, \alpha/3$ 
           there exists infinitely many strictly convex 
           entropies for which $E(u_-;\alpha, U)$ is positive. 
\end{enumerate} 
\end{theorem}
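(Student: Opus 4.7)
The plan is to derive a single closed-form endpoint expression for $E(u_-;\alpha,U)$ that depends only on $u_\pm$ and handles three of the four claims, and to use the explicit profile \eqref{2.17} only for the remaining one. Starting from \eqref{2.20} I would first rewrite $E = \int_{u_-}^{u_+} U'(s)(3s^2-\lam)\,ds$, observe that the antiderivative $V(s) := s^3 - \lam s - u_-^3 + \lam u_-$ of $3s^2 - \lam$ vanishes at both $u_-$ and $u_+$ by Rankine--Hugoniot \eqref{2.8}, and integrate by parts to obtain
\[
E(u_-;\alpha,U) \;=\; -\int_{u_-}^{u_+}\! U''(s)\,V(s)\,ds, \qquad V(s) = (s-u_-)(s-u_+)(s+u_-+u_+),
\]
the factorization coming from Vieta applied to the cubic, whose first two roots are $u_\pm$.

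With this formula in hand, part~(1) is actually cleaner from \eqref{2.21}: when $U''\equiv 1$ and $U'''\equiv 0$ only the diffusive term survives, giving $E = -\alpha\int_\RR u_y^2\,dy \leq 0$ immediately. For part~(2) I would use the middle branch of \eqref{2.18} to substitute $u_+ = -u_-/2$, so that $u_-+u_+ = -u_+$ and the cubic $V$ collapses to the perfect-square form $V(s) = (s-u_-)(s-u_+)^2$. On the interval joining $u_+$ and $u_-$ the factor $(s-u_-)$ has a single sign while $(s-u_+)^2 \geq 0$, so $V$ itself has a single sign; combined with $U''\geq 0$ and a check of the orientation of $\int_{u_-}^{u_+}$, this yields $E \leq 0$ for every convex entropy. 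The opposite sign of $u_-$ is handled identically via the $u \mapsto -u$ symmetry of the cubic flux.

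For part~(3) I would return to \eqref{2.21}, rewritten as $E = \int_\RR u_y^2\bigl(-\alpha U''(u) + \tfrac12 U'''(u)\,u_y\bigr)\,dy$, and insert the explicit nonclassical profile \eqref{2.17}. Differentiating produces $u_y = -(\beta^2/\sqrt 2)\,\mathrm{sech}^2(\beta y/\sqrt 2)$ where $\beta := |u_-| - \alpha/(3\sqrt 2)$, so $|u_y| \leq \beta^2/\sqrt 2$ uniformly on $\RR$. The integrand is nonpositive pointwise as soon as $\tfrac12|U'''(u)|\,|u_y| \leq \alpha U''(u)$, and substituting the uniform bound on $|u_y|$ converts this into exactly the hypothesis \eqref{2.22}. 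For part~(4) I would return to the endpoint formula: when $|u_-| > \talpha$ the outer branches of \eqref{2.18} give $u_-+u_+ = (\sgn u_-)\,\talpha/2$, so $V(s)$ has a simple zero at $s_* := -(\sgn u_-)\,\talpha/2$ strictly between $u_\pm$ and changes sign there. Choosing a nonnegative smooth bump $\phi$ supported on the subinterval where $V$ has the sign that makes $\int \phi V\,ds > 0$, and setting $U''(s) := 1 + M\phi(s)$, yields a strictly convex entropy for every $M \geq 0$; for $M$ large enough the bump dominates and $E(u_-;\alpha,U) > 0$, producing an infinite family by varying $M$.

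The step I expect to be most delicate is the perfect-square factorization underlying part~(2): the identity $u_+ = -u_-/2 \Leftrightarrow u_-+u_+ = -u_+$ is an algebraic coincidence specific to the cubic flux at the ``sonic'' kinetic value, and it is exactly what makes admissibility automatic across the entire middle branch. Without this factorization the sign of $\int U''V\,ds$ would depend on the choice of $U$, as part~(4) confirms once one leaves the middle branch; the remaining steps are either a pointwise estimate using the explicit trajectory (part~3) or routine sign bookkeeping on a cubic polynomial (parts~2 and~4).
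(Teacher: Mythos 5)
Your proposal is correct, and for Items 1--3 it follows the paper's route in substance: Item 1 is read off from \eqref{2.21} with $U'''\equiv 0$; Item 3 is the paper's pointwise argument verbatim (the uniform bound $|u_y|\le \bigl(|u_-|-\alpha/(3\sqrt2)\bigr)^2/\sqrt2$ from the parabola \eqref{2.14}, fed into the integrand of \eqref{2.21}); and Item 2 is the same underlying fact as the paper's one-line remark that $\vfalpha$ is classical in this range --- your factorization $V(s)=(s-u_-)(s-u_+)^2$ when $u_+=-u_-/2$ is exactly why the sonic classical shock dissipates every convex entropy, so you have merely unpacked what the paper calls ``obvious.'' The genuine divergence is Item 4. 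The paper evaluates the dissipation on the Kruzkov entropies $U_k(z)=|z-k|$ at $k=-u_-/2$, obtains the explicit positive value $\tfrac34|u_-|\bigl(|u_-|-\talpha\bigr)^2$, extends to nearby $k$ by continuity, and then invokes the representation of smooth convex functions as weighted sums of Kruzkov entropies; you instead exploit the endpoint identity $E=-\int_{u_-}^{u_+}U''V\,ds$ with $V(s)=(s-u_-)(s-u_+)(s+u_-+u_+)$, locate the interior sign change of $V$ at $-(u_-+u_+)=-\sgn(u_-)\,\talpha/2$, and take $U''=1+M\phi$ with $\phi$ a bump supported where $V>0$. Your construction is more self-contained (it directly produces smooth, uniformly strictly convex entropies without the Lipschitz Kruzkov pairs or the final representation step), at the cost of the integration-by-parts bookkeeping, while the paper's version yields a concrete closed-form positive value of the dissipation. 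Both are valid; the two constructions are really dual descriptions of the same phenomenon, namely that $V$ changes sign inside $(u_+,u_-)$ precisely when $|u_-|>\talpha$, and the Kruzkov kink at $k$ is the distributional limit of your bump in $U''$.
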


\begin{proof}  
When $U$ is quadratic (with $U'' \geq 0$ and $U''' \equiv 0$) we already 
observed that Item 1 follows immediately from \eqref{2.21}. 
The statement Item 2 is also obvious since the function $\vf$ reduces to a 
classical value in the range under consideration. 
Under the condition \eqref{2.22} the {\sl integrand\/} of \eqref{2.21} is 
non-positive, as follows from the inequality (see \eqref{2.14}) 
$$
|u_y| \leq {1 \over 4 \sqrt{2}} \, (u_0 - u_2)^2 = {1 \over \sqrt{2}} 
\, \bigl(u_- - \alpha / (3 \sqrt{2})\bigr)^2. 
$$ 
This implies the statement Item 3. 
Finally, to derive Item 4 we use the (Lipschitz continuous) {\sl Kruzkov 
entropy pairs} 
\be
U_k(z) := |z-k|, \quad F_k(z) := \sgn (z-k) (z^3 - k^3), \quad z \in \RR, 
\label{2.23}
\ee
with the choice $k=-u_-/2$. We obtain 
$$ 
E(u_-; \alpha, U_k) 
= {3 \over 4} \, |u_-| \, \bigl( |u_-| - 2 \alpha \, \sqrt{2} 
/3\bigr)^2 >0. 
$$
By continuity, $E(u_-; \alpha, U_k)$ is also strictly positive for all $k$ 
in a small neighborhood of $-u_-/2$. The desired conclusion follows by observing 
that any smooth convex function can be represented by a weighted sum of Kruzkov entropies. 
\end{proof}


\begin{remark}
\label{Remark2.5} 
We collect here the explicit expressions of some functions 
associated with the model \eqref{2.1}.  
From now on we restrict attention to the entropy pair
$$
U(u) = u^2/2, \quad F(u) = 3 \, u^4/4. 
$$
First of all, recall that for the equation \eqref{2.2} 
the following two functions   
\be
\vn(u) = - {u \over 2}, \quad \vfz(u) = -u, \quad u \in \RR. 
\label{2.24}
\ee
determine the admissible range of the kinetic functions.

We define the {\sl critical diffusion-dispersion ratio}  
\be 
A(u_0, u_2) = {3 \over \sqrt{2}} \, (u_0 + u_2)
\label{2.25}
\ee
for $u_0 \geq 0$ and $u_2 \in (-u_0, -u_0/2)$
and 
for $u_0 \leq 0$ and $u_2 \in (-u_0/2, -u_0)$.  
In view of Theorem~\ref{Theorem2.3} (see also \eqref{2.15}), 
a nonclassical trajectory connecting $u_0$ to $u_2$
exists if and only if the parameter $\alpha= \eps/\sqrt{\delta}$  
equals $A(u_0, u_2)$.
The function $A$ increases monotonically in $u_2$ from the value $0$
to the {\sl threshold diffusion-dispersion ratio}  ($u_0 >0$) 
\be
\an(u_0) = {3 \, u_0  \over 2 \, \sqrt{2}}.
\label{2.26}
\ee 
For each fixed state $u_0>0$ there exists a nonclassical trajectory 
leaving from $u_0$ if and only if $\alpha$ is less than $\an(u_0)$.
On the other hand, for each fixed $\alpha$ there exists a nonclassical 
trajectory leaving from $u_0$ if and only if the left-hand state $u_0$ 
is greater than $\an^{-1}(\alpha)$. The function $\an$ is a linear function 
(for $u_0>0$) with range extending therefore from $\ban=0$ to $\anb=+\infty$.  
\end{remark}

\begin{remark}\label{Remark2.6} 
It is straightforward to check that if \eqref{2.1} is replaced with the more 
general equation
\be 
\del_t u + \del_x \bigl( K \, u^3 \bigr)   
= \eps \, u_{xx} + \delta \, C \, u_{xxx},   
\label{2.27}
\ee
where $C $ and $K$ are positive constants, then \eqref{2.26} becomes
\be
\an(u_0) = {3 \, u_0  \over 2 \, \sqrt{2}} \sqrt{K \, C}.
\label{2.28}
\ee 
\end{remark}

\begin{remark}\label{Remark2.7} 
Clearly, there is a one-parameter family of traveling waves 
connecting the same end states: If $u=u(y)$ is a solution of \eqref{2.5} and \eqref{2.6}, then the  
translated function $u=u(y + b)$ ($b \in \RR$) satisfies the same 
conditions. However, one could show that the {\sl trajectory\/} in the phase plane 
connecting two given end states is {\sl unique.\/} 
\end{remark} 


\section{Kinetic functions associated with general flux-func\-tions} 

Consider now the general {\sl diffusive-dispersive conservation law} 
\be 
\del_t u + \del_x f(u) 
= 
\eps \, \bigl(b(u) \, u_x \bigr)_x 
+ \delta \, \bigl(c_1(u) \, (c_2(u) \, u_x)_x\bigr)_x, 
\quad u = u^{\eps, \delta}(x,t), 
\label{3.1}
\ee
where the diffusion coefficient $b(u) > 0$ and dispersion coefficients $c_1(u), c_2(u) > 0$ 
are given smooth functions.  
We assume that $f:\RR \to \RR$ is a {\sl concave-convex\/} function satisfying, by definition, 
\be
\aligned 
& u \, f''(u) > 0  \quad \text{ for all } u \neq 0, \\
& f'''(0) \neq 0, \quad 
     \lim_{|u| \to +\infty} f'(u) = + \infty.  
\endaligned 
\label{3.2}
\ee  
We are interested in the singular limit $\eps \to 0$ when 
$\delta>0$ and the ratio 
$\alpha = \eps /\sqrt\delta$ is kept constant. The 
limiting equation associated with \eqref{3.1}, formally,
is the scalar conservation law 
$$ 
\del_t u + \del_x f(u) = 0, \quad u=u(x,t) \in \RR.
$$  
It can be checked that the entropy inequality 
$$ 
\del_t U(u) + \del_x F(u) \leq 0  
$$ 
holds, provided the entropy pair $(U, F)$ is chosen such that 
\be
U''(u) := {c_2(u) \over c_1(u)}, \quad
F'(u)  := U'(u) \, f'(u), \quad u \in \RR,
\label{3.3}
\ee   
which we assume in the rest of this paper. 
Since $c_1, c_2>0$ the function $U$ is strictly convex. 


Given two states $u_\pm$ and the corresponding propagation speed 
$$ 
\lam = \ab(u_-, u_+): = \begin{cases} 
{f(u_+) - f(u_-) \over u_+ - u_-},  &          u_+ \neq u_-, 
\\
f'(u_-),                            &          u_+ = u_-, 
\end{cases} 
$$ 
we search for traveling wave solutions $u=u(y)$ of \eqref{3.1} 
depending on the rescaled variable 
${y: = (x-\lam \, t)\, \alpha/\eps}$.  
Following the same lines as those in Sections~1 and 2 we find that 
the trajectory satisfies 
\be 
c_1(u) \, ( c_2(u) \, u_y)_y + \alpha \, b(u)u_y 
= -\lam \, (u - u_-) + f(u) - f(u_-), 
\quad u = u(y), 
\label{3.4}
\ee 
and the boundary conditions
$$ 
\lim_{y \to \pm \infty} u(y) = u_\pm, 
\quad 
\lim_{y \to \pm \infty} u_y(y) = 0. 
$$

Setting now 
$$
v = c_2(u) \, u_y,
$$
we rewrite \eqref{3.4} in the general form \eqref{2.10} for the unknowns $u=u(y)$
and $v=v(y)$ ($y \in \RR$), i.e.,
\be
{d \over dy} \begin{pmatrix} u \\ v \end{pmatrix} 
= K(u,v)
\label{3.5}
\ee
with 
\be 
K(u,v) = \begin{pmatrix} {v \over c_2(u)} \\ 
 - \alpha \, {b(u) \over c_1(u) c_2(u)} \, v + {g(u, \lam) - g(u_-, \lam) \over 
 c_1(u)}  
\end{pmatrix}, 
\quad 
g(u, \lam) := f(u) - \lam \, u, 
\label{3.6}
\ee 
while the boundary conditions take the form  
\be
\lim_{y \to \pm\infty} u(y) = u_\pm, 
\quad 
\lim_{y \to \pm\infty} v(y) = 0. 
\label{3.7}
\ee

The function $K$ in \eqref{3.6} vanishes at the {\sl equilibrium points\/} 
$(u,v) \in \RR^2$ satisfying  
\be 
g(u, \lam) = g(u_-, \lam), \quad v=0.  
\label{3.8}
\ee
In view of the assumption \eqref{3.2}, given a left-hand state $u_-$ and a speed 
$\lam$ 
there exist at most three equilibria $u$ satisfying \eqref{3.8} (including $u_-$ itself). 
Considering a trajectory leaving from $u_-$ at $-\infty$, 
we will determine whether this trajectory diverges to 
infinity or else which equilibria (if there is more than one 
equilibria) it actually connects to at $+\infty$. 
Before stating our main result (cf.~Theorem~\ref{Theorem3.3}, below) 
let us derive some fundamental inequalities satisfied by states $u_-$ 
and $u_+$ connected by a traveling wave. 


Consider the {\sl entropy dissipation}  
\be
E(u_-, u_+) := - \ab(u_-, u_+) \, \bigl(U(u_+) - U(u_-)\bigr) + 
                F(u_+) - F(u_-) 
\label{3.9}
\ee
or, equivalently, using \eqref{3.3} and \eqref{3.7} 
\be 
\aligned 
E(u_-, u_+) & = 
\int_{-\infty}^{+\infty} 
U'(u(y)) \, \bigl( - \lam \, u_y(y) + f(u(y))_y \bigr) \, dy
\\
& = 
- \int_{-\infty}^{+\infty} U''(u(y)) \, 
\bigl(-\lam \, (u(y) - u_-) + f(u) - f(u_-)\bigr) \, u_y(y) \, dy
\\
& = - \int_{u_-}^{u_+} 
           \bigl( g(z, \ab(u_-, u_+)) - g(u_-, \ab(u_-, u_+)) \bigr) \, 
           {c_2(z) \over c_1(z)} \,  dz.  
\endaligned 
\label{3.10}
\ee
In view of  
$$
\aligned 
E(u_-, u_+) 
& =\int_{-\infty}^{+\infty} U'(u) \, \bigl(
\alpha \, \bigl(b(u) \, u_y \bigr)_y 
      + \bigl(c_1(u) \, (c_2(u) \, u_y)_y\bigr)_y \bigr) \, dy 
      \\  
& = - \int_{-\infty}^{+\infty} \alpha \, U''(u) \, b(u) \, u_y^2 \, dy, 
\endaligned 
$$
we have immediately the following.

\begin{lemma}[Entropy inequality] 
\label{Lemma3.1}
If there exists a traveling wave of $\eqref{3.4}$ connecting $u_-$ to $u_+$, 
then the corresponding entropy dissipation is non-positive, 
$$
E(u_-, u_+)  \leq  E(u_-, u_-) = 0. 
$$ 
\end{lemma}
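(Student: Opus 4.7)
The plan is to realize the entropy dissipation $E(u_-,u_+)$ as the integral of $U'(u)$ against the traveling-wave equation and then integrate by parts, exploiting the distinguished choice $U''(u)=c_2(u)/c_1(u)$ to wipe out the dispersive contribution entirely, leaving only a manifestly non-positive diffusive piece.

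Concretely, I would start from the third line of display (3.10), which already writes $E(u_-,u_+)$ as $\int U'(u)(-\lambda u_y+f(u)_y)\,dy$. Differentiating the once-integrated traveling-wave equation (3.4) in $y$ gives
\[
-\lambda\,u_y+f(u)_y \;=\; \alpha\,(b(u)u_y)_y \;+\; \bigl(c_1(u)(c_2(u)u_y)_y\bigr)_y,
\]
so that
\[
E(u_-,u_+)=\int_{-\infty}^{+\infty} U'(u)\,\Bigl(\alpha(b(u)u_y)_y+\bigl(c_1(u)(c_2(u)u_y)_y\bigr)_y\Bigr)\,dy.
\]
Integration by parts in $y$ is legitimate because the boundary conditions (3.7) force $u_y$ (and hence $(c_2(u)u_y)_y$, by the ODE and $u\to u_\pm$) to vanish at $\pm\infty$; the boundary terms drop.

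For the diffusive piece this yields $-\int \alpha\,U''(u)\,b(u)\,u_y^2\,dy$ directly. For the dispersive piece, integration by parts produces $-\int U''(u)\,u_y\,c_1(u)(c_2(u)u_y)_y\,dy$; here the key algebraic cancellation is to substitute $U''(u)\,c_1(u)=c_2(u)$, so the integrand becomes $c_2(u)u_y\,(c_2(u)u_y)_y=\tfrac12\bigl((c_2(u)u_y)^2\bigr)_y$, a perfect $y$-derivative whose integral over $\mathbb{R}$ vanishes by the boundary conditions. What remains is
\[
E(u_-,u_+) \;=\; -\int_{-\infty}^{+\infty} \alpha\,U''(u(y))\,b(u(y))\,u_y(y)^2\,dy,
\]
exactly as displayed in the text preceding the statement. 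Since $\alpha>0$, $b>0$, and $U''=c_2/c_1>0$ by hypothesis, the integrand is non-negative, giving $E(u_-,u_+)\le 0$; and trivially $E(u_-,u_-)=0$ from definition (3.9).

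The only mildly delicate point is justifying the vanishing of boundary terms at $\pm\infty$ in the integrations by parts (and the integrability of the resulting quadratic form), but this is standard once one uses the ODE (3.4) together with (3.7) to propagate decay of $u_y$ to decay of $(c_2(u)u_y)_y$. Everything else is routine once the cancellation $U''\,c_1=c_2$ is noticed; that algebraic identity, built into the choice (3.3) of entropy, is the heart of the argument.
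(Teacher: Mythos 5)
Your proposal is correct and follows essentially the same route as the paper, which obtains the identity $E(u_-,u_+)=-\int_{\RR}\alpha\,U''(u)\,b(u)\,u_y^2\,dy$ by integrating $U'(u)$ against the regularization terms and using the choice \eqref{3.3} to make the dispersive contribution an exact derivative. Your added care about the decay of $u_y$ and $(c_2(u)u_y)_y$ at $\pm\infty$ is a reasonable elaboration of a step the paper leaves implicit, but it is not a different argument.
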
 
 
From the graph of the function $f$ we define 
the functions $\vn$ and $\lna$ by  
$$ 
\lna(u) := f'\big(\vn(u)\big) = {f(u) - f\big(\vn(u)\big) \over u -\vn(u)},  
\quad u \neq 0. 
$$
We have $u \, \vn(u)<0$ and by continuity $\vn(0)=0$
and, thanks to \eqref{3.2}, the map $\vn: \RR \to \RR$ is decreasing 
and onto. 
It is invertible and its inverse function is denoted by $\vmn$. 
Observe in passing that, 
$u_-$ being kept fixed, $\lna(u_-)$ is a {\sl lower bound\/} for all shock speeds $\lam$ 
satisfying the Rankine-Hugoniot relation 
$$
- \lam \, (u_+ - u_-) + f(u_+) - f(u_-) = 0 
$$
for some $u_+$.

The properties of the entropy dissipation \eqref{3.9} are determined from the {\sl zero-entropy dissipation} 
function $\vfz$ was introduced.

\begin{lemma}[Entropy dissipation function]
\label{Lemma3.2}
There exists a decreasing function $\vfz :\RR \to \RR$ 
such that for all $u_- >0$ (for instance)  
$$ 
E(u_-, u_+) = 0 \text{ and } u_+ \neq u_- 
\quad \text{ if and only if } \quad u_+ = \vfz(u_-),  
$$
$$ 
E(u_-, u_+) < 0 \quad \text{ if and only if } \quad \vfz(u_-) < u_+ < u_-,    
$$ 
and 
$$
\vmn(u_-) < \vfz(u_-) < \vn(u_-). 
$$ 
\end{lemma}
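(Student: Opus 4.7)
The plan is to fix $u_->0$ (the case $u_-<0$ being symmetric) and to study $\Phi(u_+):=E(u_-,u_+)$ as a function of $u_+\in\RR$, locating its zeros and sign changes. First I would differentiate the closed-form expression
$$
E = -\ab(u_-,u_+)\bigl(U(u_+)-U(u_-)\bigr) + F(u_+)-F(u_-)
$$
with respect to $u_+$, using $F'=U'f'$ from \eqref{3.3} together with the identity $\partial_{u_+}\ab=(f'(u_+)-\ab)/(u_+-u_-)$, to obtain the factorization
$$
\Phi'(u_+) = \bigl(f'(u_+) - \ab(u_-,u_+)\bigr)\left(U'(u_+) - {U(u_+)-U(u_-) \over u_+-u_-}\right).
$$
The strict convexity of $U$ (from $U''=c_2/c_1>0$) forces the second factor to have the sign of $u_+-u_-$ and in particular to be strictly negative for $u_+<u_-$, so on the half-line $\{u_+<u_-\}$ the sign of $\Phi'$ is opposite to that of the first factor.

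Next, exploiting the concave-convex structure \eqref{3.2}, I would show that the equation $f'(u_+)=\ab(u_-,u_+)$ has in $\{u_+\le u_-\}$ exactly the two solutions $u_+=u_-$ and $u_+=\vn(u_-)$ (the latter by the very definition of $\vn$). Uniqueness comes from monotonicity of the auxiliary $N(u_+):=f'(u_+)(u_+-u_-)-(f(u_+)-f(u_-))$ on each of $(-\infty,0)$ and $(0,u_-)$, since $N'(u_+)=f''(u_+)(u_+-u_-)$ has fixed sign on each branch by \eqref{3.2}. A chord-tangent comparison then gives $f'(u_+)<\ab$ on $(\vn(u_-),u_-)$ and $f'(u_+)>\ab$ on $(-\infty,\vn(u_-))$, hence $\Phi'>0$ on $(\vn(u_-),u_-)$ and $\Phi'<0$ on $(-\infty,\vn(u_-))$. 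Combined with $\Phi(u_-)=0$ and $\Phi(u_+)\to+\infty$ as $u_+\to-\infty$ (which uses the coercivity of $f$ implied by \eqref{3.2}), the graph of $\Phi$ strictly decreases from $0$ down to a strictly negative minimum at $u_+=\vn(u_-)$ and then strictly increases to $+\infty$. Consequently there is a unique second zero $\vfz(u_-)<\vn(u_-)$, and $\Phi<0$ precisely on $(\vfz(u_-),u_-)\setminus\{u_-\}$, as claimed.

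To bound $\vfz(u_-)$ from below by $\vmn(u_-)$ I would evaluate $\Phi$ at $u_+=\vmn(u_-)$. By definition of $\vmn$ one has $\ab(u_-,\vmn(u_-))=f'(u_-)$, so the integral form \eqref{3.10} reduces to
$$
E\bigl(u_-,\vmn(u_-)\bigr) = \int_{\vmn(u_-)}^{u_-} P(z)\,{c_2(z)\over c_1(z)}\,dz, \qquad P(z):=f(z)-f(u_-)-f'(u_-)(z-u_-),
$$
and the concave-convex geometry of $f$ (the signs of $P''=f''$ on each branch, together with $P(u_-)=P'(u_-)=0$, $P(\vmn(u_-))=0$, and $P(-\infty)=-\infty$) forces $P>0$ on the open interval $(\vmn(u_-),u_-)$. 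Hence $E(u_-,\vmn(u_-))>0$ and $\vfz(u_-)>\vmn(u_-)$. Finally, monotonicity of $u_-\mapsto\vfz(u_-)$ follows from the implicit function theorem applied to $\Phi(u_+)=0$: a symmetric computation yields
$$
\partial_{u_-}E = \bigl(\ab-f'(u_-)\bigr)\left(U'(u_-)-{U(u_+)-U(u_-)\over u_+-u_-}\right),
$$
which at $u_+=\vfz(u_-)\in(\vmn(u_-),\vn(u_-))$ is the product of a strictly negative factor (since $\ab<f'(u_-)$ throughout $(\vmn(u_-),u_-)$ by the $\ab$-analysis above) and a strictly positive factor (by strict convexity of $U$), so $\partial_{u_-}E<0$; combined with $\partial_{u_+}E=\Phi'(\vfz(u_-))<0$, this yields $d\vfz/du_-<0$.

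The main obstacle I anticipate is the claim that $f'(u_+)=\ab(u_-,u_+)$ admits no solutions beyond $u_+=u_-$ and $u_+=\vn(u_-)$, since the sign of $f'-\ab$ must be tracked across the inflection point $u=0$ where the convexity of $f$ switches. This step is where the concave-convex hypothesis \eqref{3.2} is genuinely used, and it also underpins the identification of $\vn(u_-)$ as the unique interior critical point of $\Phi$ on $(-\infty,u_-)$.
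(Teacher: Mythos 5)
The paper states Lemma~\ref{Lemma3.2} without proof (it is quoted as a known fact from the Bedjaoui--LeFloch series), so there is no in-paper argument to compare against; judged on its own, your proof is correct and follows the standard route. The factorization $\Phi'(u_+)=\bigl(f'(u_+)-\ab(u_-,u_+)\bigr)\bigl(U'(u_+)-\tfrac{U(u_+)-U(u_-)}{u_+-u_-}\bigr)$ is right, the analysis of $N(u_+)=f'(u_+)(u_+-u_-)-(f(u_+)-f(u_-))$ via $N'=f''(u_+)(u_+-u_-)$ correctly identifies $u_-$ and $\vn(u_-)$ as the only critical points of $\Phi$ on $\{u_+\le u_-\}$, the evaluation $E(u_-,\vmn(u_-))>0$ via concavity of $P$ on $(-\infty,0)$ and convexity on $(0,u_-)$ gives the lower bound $\vmn(u_-)<\vfz(u_-)$, and the implicit-function computation of $d\vfz/du_-$ is sound. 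Two small points. First, both ``if and only if'' claims in the lemma require you to dispose of the half-line $u_+>u_-$, which you never mention: there your own factorization gives $f'(u_+)>\ab(u_-,u_+)$ (convexity of $f$ on $(0,\infty)$ with $0<u_-<u_+$) and a positive second factor, hence $\Phi'>0$ and $\Phi>0$ on $(u_-,+\infty)$; add that one line. Second, the assertion $\Phi(u_+)\to+\infty$ as $u_+\to-\infty$ is not needed (and would require a separate coercivity estimate): since you later prove $\Phi(\vmn(u_-))>0$ and $\Phi$ is strictly monotone on $(-\infty,\vn(u_-))$, the existence and uniqueness of the second zero already follow, and the ordering $\vmn(u_-)<\vfz(u_-)<\vn(u_-)$ comes out of the same sign information. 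With these adjustments the argument is complete.
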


In passing, define also the function $\vsz=\vsz(u_-)$ and the speed $\lz=\lz(u_-)$ 
by    
\be
\lz (u_-) = {f(u_-) - f(\vfz(u_-)) \over u_- - \vfz(u_-)}
           = {f(u_-) - f(\vsz(u_-)) \over u_- - \vsz(u_-)}, 
	   \quad u_- \neq 0. 
\label{3.11}
\ee
Combining Lemmas~\ref{Lemma3.1} and \ref{Lemma3.2} together we conclude that, 
if there exists a traveling wave connecting $u_-$ to $u_+$,  
necessarily 
\be  
u_+ 
\text{ belongs to the interval } [\vfz(u_-), u_-]. 
\label{3.12}
\ee
In particular, the states $u_+ >u_-$ and $u_+ < \vmn(u_-)$ 
{\sl cannot\/} be reached by a traveling wave and, therefore, 
it is not restrictive to focus on the case that three equilibria exist.  


Next, for each $u_->0$ we define the {\sl shock set} generated by 
the diffusive-dispersive model \eqref{3.1} by  
$$
\SS_\alpha (u_-) := \Big\{ u_+ \, / \, \text{ there exists a
traveling wave of \eqref{3.4} connecting $u_-$ to $u_+$} \Big\}. 
$$


\begin{theorem}[Kinetic function and shock set for general flux]
\label{Theorem3.3} 
Given a concave-convex flux-function $f$ (see \eqref{3.2}),
consider the diffusive-dispersive model \eqref{3.1} in which 
the ratio 
${\alpha = \eps / \sqrt{\delta} >0}$ is fixed.
Then, there exists a locally Lipschitz continuous and 
decreasing {\sl kinetic function}  
${\vfalpha: \RR \to \RR}$ satisfying 
\be
\aligned 
& \vn(u) \leq \vfalpha(u) < \vfz(u), \quad u < 0, 
\\
& \vfz(u) < \vfalpha(u) \leq \vn(u), \quad u > 0,  
\endaligned 
\label{3.13}
\ee
and such that 
\be
\SS_\alpha(u_-) = 
\begin{cases} 
\bigr[ u_-, \vsalpha(u_-) \bigl)  
\cup \bigl\{\vfalpha(u_-)\bigr\}, \quad u_- < 0, 
\\ 
\bigl\{\vfalpha(u_-)\bigr\} 
\cup \bigl(\vsalpha(u_-), u_-\bigr], \quad u_- > 0. 
\end{cases}
\label{3.14}
\ee
Here, the function $\vsalpha$ is defined from the kinetic function $\vfalpha$ by 
$$
{f(u) - f\big(\vsalpha(u)\big) \over u -\vsalpha(u)} 
=
{f(u) - f\big(\vfalpha(u)\big) \over u -\vfalpha(u)},   \quad u \neq 0, 
$$
with the constraint  
\be
\aligned 
& \vsz(u) < \vsalpha(u) \leq \vn(u),  \quad u < 0, 
\\
& \vn(u) \leq \vsalpha(u) < \vsz(u),  \quad u > 0.  
\endaligned 
\label{3.15}
\ee

Moreover, there exists a function 
$$
\an: \RR \to [0, +\infty), 
$$ 
called the {\sl threshold diffusion-dispersion ratio},  
which is smooth away from $u = 0$, Lipschitz continuous at $u=0$,
increasing in $u > 0$, and decreasing in $u < 0$  
with  
\be
\an(u) \sim C \, |u| \quad \text{ as } u \to 0,  
\label{3.16}
\ee
(where $C>0$ depends upon $f$, $b$, $c_1$, and $c_2$ only)
and such that 
\be
\vfalpha(u) = \vn(u) \quad \text{ when } \, \alpha \geq \an(u).   
\label{3.17}
\ee
Additionally we have 
\be
\vfalpha(u) \to \vfz(u) \quad \text{ as } \, \alpha \to 0 \quad 
\text{ for each } u \in \RR. 
\label{3.18}
\ee
\end{theorem}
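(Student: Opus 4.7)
I would reduce Theorem~\ref{Theorem3.3} to a shooting argument for the planar system \eqref{3.5}--\eqref{3.6} in the phase plane, following the same template used for the cubic case but now exploiting only the qualitative structure \eqref{3.2} of $f$. Fixing $u_->0$ (the case $u_-<0$ being symmetric) and a speed $\lambda\in(\lna(u_-),f'(u_-))$, equation \eqref{3.8} has three roots $u_2<u_1<u_0=u_-$. A direct computation of the Jacobian eigenvalues of $K$ at an equilibrium $(u_*,0)$ yields
\[
\tfrac12\Bigl(-\alpha\tfrac{b(u_*)}{c_1(u_*)c_2(u_*)}\pm\sqrt{\bigl(\alpha\tfrac{b(u_*)}{c_1(u_*)c_2(u_*)}\bigr)^2+4\tfrac{g_u'(u_*,\lambda)}{c_1(u_*)c_2(u_*)}}\Bigr),
\]
and \eqref{3.2} (which gives $g_u'>0$ at $u_0,u_2$ but $g_u'<0$ at $u_1$) then forces $u_0$ and $u_2$ to be \emph{saddle points} while $u_1$ is \emph{stable} (node or spiral). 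Theorem~\ref{Theorem2.2} provides a unique branch $y\mapsto (U,V)(y;\lambda)$ of the unstable manifold of $u_0$ entering $v<0$, and this is the object I would track.

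The next step is to follow this trajectory as $\lambda$ decreases from $f'(u_-)$. For $\lambda$ close to $f'(u_-)$ the equilibrium $u_1$ lies near $u_0$, and a local stable-manifold argument shows that the trajectory is captured by $u_1$; this yields the classical connections forming the interval component of \eqref{3.14}. Since $\partial_\lambda g(u,\lambda)=-u$ has a definite sign in the relevant range, trajectories corresponding to different speeds can be compared monotonically in the phase plane. The critical speed is then defined by
\[
\lf(u_-,\alpha):=\inf\bigl\{\lambda\in(\lna(u_-),f'(u_-))\mid \text{the unstable trajectory from }u_0\text{ is captured by }u_1\bigr\},
\]
and a continuity argument, combined with \eqref{3.12} and Lemma~\ref{Lemma3.2}, forces the trajectory at $\lambda=\lf$ to approach $u_2$, producing the saddle-saddle connection. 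I would set $\vfalpha(u_-):=u_2(\lf(u_-,\alpha))$; the inequalities \eqref{3.13}, the shock set \eqref{3.14}, and the constraint \eqref{3.15} on the companion $\vsalpha$ would then follow from Lemma~\ref{Lemma3.1} together with the characterization of $\vfz$ in Lemma~\ref{Lemma3.2}.

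For the remaining statements the plan is as follows. Smooth dependence of the unstable manifold on $(\lambda,u_-,\alpha)$, combined with transversality of the saddle-saddle connection (which I expect to verify by a Melnikov-type computation based on the dissipation identity \eqref{3.10}), gives smoothness and monotonicity of $\vfalpha$ away from $u=0$. The threshold $\an(u_-)$ would be defined as the value of $\alpha$ at which the saddle-saddle connection degenerates into the tangent classical connection at $\vn(u_-)$, so that \eqref{3.17} is essentially a definition. The near-zero scaling \eqref{3.16} I would obtain by rescaling $u=|u_-|w$, which in the leading order reduces \eqref{3.1} to the cubic model of Remark~\ref{Remark2.6} and reads off the constant $C$ from \eqref{2.28}. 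Finally, \eqref{3.18} follows from \eqref{3.10}, since as $\alpha\to 0$ the dissipation is forced to zero, pushing $u_+\to\vfz(u_-)$. \textbf{The main obstacle} will be closing the shooting argument in the \emph{spiral regime} of $u_1$: there the unstable manifold of $u_0$ may wind around $u_1$ several times before its fate is decided, so ruling out non-monotone crossings of a transverse section separating the $u_2$ basin from the $u_1$ basin requires constructing an invariant region in the phase plane, adapted to the convex entropy $U$ from \eqref{3.3} and to the concave-convex geometry of $f$.
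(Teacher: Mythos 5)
Your proposal follows the general phase-plane template of the paper (equilibria classification, separatrix tracking, entropy bounds from Lemma~\ref{Lemma3.2}, reduction to the cubic model for the small-$u$ asymptotics), but it inverts the shooting parameter: you fix $\alpha$ and shoot in the speed $\lambda$, defining $\lf(u_-,\alpha)$ as the infimum of speeds for which the unstable trajectory from $u_0$ is captured by $u_1$. The paper instead fixes the \emph{end states} $u_0,u_2$ (hence $\lambda$) and shoots in $\alpha$ (Theorem~\ref{Theorem4.1}): it measures the gap $W(\alpha)=V_+(\alpha)-V_-(\alpha)$ between the unstable separatrix of $u_0$ and the stable separatrix of $u_2$ on the section $\{u=u_1\}$, shows $W(0)\ge 0$ from the first integral involving $G$ and $W(\alpha)<0$ for large $\alpha$ from the damping estimate $V_+(\alpha)\le-\kappa\alpha(u_1-u_2)$, and concludes by the intermediate value theorem; uniqueness and monotonicity of the resulting critical ratio $A(u_0,u_2)$ follow from a comparison of the first-order equations \eqref{4.5} in the $(u,v)$ plane. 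The kinetic function is then obtained by \emph{inverting} $u_2\mapsto A(u_0,u_2)$, and the classical part of the shock set \eqref{3.14} is recovered afterwards (Theorems~\ref{Theorem5.1}--\ref{Theorem5.2}) using the already-constructed nonclassical trajectory as a barrier.

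The genuine gap in your version sits exactly where you flag it, and it is not a technicality you can defer: the claim that ``a continuity argument forces the trajectory at $\lambda=\lf$ to approach $u_2$'' is the entire theorem, and in the spiral regime of $u_1$ nothing guarantees that the set of captured speeds is an interval, nor that at its infimum the trajectory lands on the stable manifold of $u_2$ rather than passing below it and escaping to $u\to-\infty$. Constructing the invariant region you allude to is precisely the hard part, and you give no indication of how to do it. The paper's device for avoiding this is worth internalizing: by Lemma~\ref{Lemma4.3} both separatrices are monotone graphs $v_-(u)$ on $[u_1,u_0]$ and $v_+(u)$ on $[u_2,u_1]$ \emph{up to their first arrival at $u=u_1$}, a fact that holds whether $u_1$ is a node or a spiral; the connection is detected by the sign change of $W$ on that section, so the winding of trajectories around $u_1$ never enters the existence argument. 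Moreover, shooting in $\alpha$ is what makes the comparison argument clean, since $\alpha$ enters \eqref{4.5} only through the linear damping term $\alpha\,(b/c_1)\,v$, yielding the strict monotonicity \eqref{4.9i}--\eqref{4.9ii} of $V_\pm$ and hence uniqueness of $A(u_0,u_2)$; this monotonicity is also what produces the threshold function $\an$, the limit \eqref{3.18}, and the monotonicity of $\vfalpha$ (Theorem~\ref{Theorem5.3}) essentially for free, whereas in your setup each of these would require a separate transversality or Melnikov computation that you have not supplied.
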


%

The proof of Theorem~\ref{Theorem3.3} will be the subject of Sections~4 and 5 
below. The kinetic function $\vfalpha:\RR \to\RR$  
completely characterizes the dynamics of the nonclassical shock waves 
associated with \eqref{3.1}. In view of Theorem~\ref{Theorem3.3} one can solve the Riemann problem.  
The kinetic function $\vfalpha$ is 
decreasing and its range is limited by the functions $\vn$ and $\vfz$.  
Therefore we can solve the Riemann problem, uniquely in the class of 
nonclassical entropy solutions selected by the kinetic function 
$\vfalpha$.

The statements \eqref{3.17} and \eqref{3.18} provide us with
important qualitative properties of the nonclassical shocks: 
\begin{enumerate} 
\item The shocks leaving from $u_-$ are always classical 
if the ratio $\alpha$ is chosen to be sufficiently large 
or if $u_-$ is sufficiently small. 
\item The shocks leaving from $u_-$ are always nonclassical 
if the ratio $\alpha$ is chosen to be 
sufficiently small. 
\end{enumerate} 
Furthermore, under a mild assumption on the growth of $f$ at infinity,
one could also establish that
the shock leaving from $u_-$ are always nonclassical 
if the state $u_-$ is sufficiently large. 
(See the bibliographical notes.)


In this rest of this section we introduce some important notation 
and investigate the limiting case when the diffusion is identically zero
($\alpha = 0$). 
We always suppose that $u_- >0$ (for definiteness) and we set 
$$
u_0 = u_-. 
$$ 
The shock speed $\lam$ is regarded as a parameter allowing us to 
describe the set of attainable right-hand states. 
Precisely, given a speed in the interval 
$$
\lam \in \bigl(\lna(u_0), f'(u_0)\bigr), 
$$
there exist exactly three distinct solutions denoted by 
$u_0$, $u_1$, and $u_2$ of the equation \eqref{3.8} with  
\be
u_2 < \vn(u_0) < u_1 < u_0. 
\label{3.19}
\ee 
Recall that no trajectory exists when $\lam$ is chosen 
outside the interval limited by $\lna(u_0)$ and $f'(u_0)$.

From Lemmas~\ref{Lemma3.1} and \ref{Lemma3.2} (see \eqref{3.12}) it follows that a trajectory 
either is {\sl classical\/} if $u_0$ is connected to  
\be
\text{$u_1 \in [\vn(u_0), u_0]$ with $\lam \in \bigl[\lna(u_0), f'(u_0)\bigr]$}
\label{3.20}
\ee
or else is {\sl nonclassical\/} if $u_0$ is connected to  
\be 
\text{$u_2 \in [\vfz(u_0), \vn(u_0))$ 
with $\lam \in \bigl(\lna(u_0), \lz(u_0)\bigr]$.}
\label{3.21}
\ee


For the sake of completeness we cover here both cases of positive 
and negative dispersions. For the statements in 
Lemma~\ref{Lemma3.4} and Theorem~\ref{Theorem3.5} below {\sl only\/} we will set   
$\alpha := \eps / \sqrt{|\delta|}$ and $\eta = \sgn(\delta)= \pm 1$.   
If $(u,v)$ is an equilibrium point, 
the eigenvalues of the Jacobian matrix of the function $K(u,v)$ 
in \eqref{3.6} are found to be 
$$
\mu 
= {1 \over 2} \, 
\Bigg(- \eta \, \alpha \, { b(u) \over c_1(u) c_2(u) } \pm \sqrt{ \alpha^2 
\, {b(u)^2 \over c_1(u)^2 c_2(u)^2} 
+ 4 \eta  \, {f'(u) - \lam \over c_1(u) \, c_2(u)}} \, \Bigg). 
$$
So, we set 
\be
\aligned 
&  
\bmu(u; \lam, \alpha)
= 
{\eta \, \alpha \over 2} \, { b(u) \over c_1(u) c_2(u) } \, 
\Bigg(-1 - \eta \, \sqrt{1 + {4 \eta \over \alpha^2} \, {c_1(u) c_2(u)\over 
b(u)^2} \, (f'(u) - \lam)  } \, \Bigg), \\ 
& 
\mub(u; \lam, \alpha)
= 
{\eta \, \alpha \over 2} \, { b(u) \over c_1(u) c_2(u) } \, 
\Bigg(-1 + \eta \, \sqrt{1 + {4\eta \over \alpha^2} \, {c_1(u) c_2(u) \over 
b(u)^2} \,  (f'(u) - \lam)  } \, \Bigg). 
\endaligned 
\label{3.22}
\ee

\begin{lemma}[Nature of equilibrium points]
\label{Lemma3.4} 
Fix some values $u_-$ and $\lam$ and denote by $(u_*,0)$ any one of the 
three equilibrium points satisfying $\eqref{3.8}$. 
\begin{enumerate} 
\item If $\eta = +1$ and $f'(u_*) - \lam<0$, then $(u_*,0)$ is a {\rm stable point.} 
\item If $\eta \, (f'(u_*) - \lam)>0$, then $(u_*,0)$ is a {\rm saddle point.}  
\item If $\eta = -1$ and $f'(u_*) - \lam>0$, then $(u_*,0)$ is an {\rm unstable point.}   
\end{enumerate} 
Furthermore, in the two cases that $\eta \, ( f'(u_*) - \lam) <0$ we 
have the additional result: When  
$\alpha^2 \, b(u_*)^2 + 4\, \eta \, c_1(u_*) \, c_2(u_*) \, (f'(u_*) - 
\lam) \geq 0$   
the equilibrium is a {\rm node}, and is a {\rm spiral} otherwise. 
\end{lemma}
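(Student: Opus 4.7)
The plan is to proceed directly from the explicit eigenvalue formulas~\eqref{3.22} and perform a case analysis driven by the signs of $\eta$ and of $q := f'(u_*) - \lam$. To streamline the bookkeeping, I would introduce the positive quantity $\beta := \alpha \, b(u_*) / (c_1(u_*) c_2(u_*))$ and the discriminant $\Delta := \alpha^2 \, b(u_*)^2 + 4 \eta \, c_1(u_*) \, c_2(u_*) \, q$, so that after factoring $c_1(u_*)c_2(u_*)$ out of the square root in \eqref{3.22} the eigenvalues become
\begin{equation*}
\bmu, \mub \;=\; \frac{1}{2 \, c_1(u_*) c_2(u_*)} \Bigl( -\eta \, \alpha \, b(u_*) \;\pm\; \sqrt{\Delta} \Bigr).
\end{equation*}
Since $b, c_1, c_2 > 0$, the leading term $-\eta\,\alpha\,b(u_*)$ is negative when $\eta = +1$ and positive when $\eta = -1$, and this provides the skeleton for the classification.

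First I would treat the saddle cases, i.e.\ $\eta \, q > 0$. Here $4\eta\,c_1c_2\,q > 0$, so $\Delta > \alpha^2 b(u_*)^2 > 0$ and the eigenvalues are real; moreover $\sqrt{\Delta} > \alpha\,b(u_*)$, which forces $\bmu$ and $\mub$ to have opposite signs regardless of $\eta$. This gives item~(2). Next, for $\eta = +1$ and $q < 0$, I would observe that $\Delta \leq \alpha^2 b(u_*)^2$, so if $\Delta \geq 0$ both eigenvalues are real with $\sqrt{\Delta} < \alpha\,b(u_*)$, making both eigenvalues negative (stable node), while if $\Delta < 0$ the eigenvalues are complex conjugate with common real part $-\alpha\,b(u_*)/(2c_1c_2) < 0$ (stable spiral); this establishes item~(1) together with the node/spiral dichotomy. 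The case $\eta = -1$, $q > 0$ is entirely symmetric: the leading term flips sign, the square-root magnitude is again bounded by $\alpha\,b(u_*)$, and one reads off two positive real eigenvalues or a complex pair with positive real part, giving item~(3).

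There is no serious obstacle here; the argument is essentially a careful sign bookkeeping for a $2\times 2$ Jacobian with parametrically given entries. The only mildly delicate point is keeping track of which of the $\pm$ branches in \eqref{3.22} is labelled $\bmu$ versus $\mub$ when $\eta = -1$, since $\eta$ appears both outside the square root and as a sign inside the $\pm\,\eta\sqrt{\,\cdot\,}$ factor; but this is handled by verifying that the product $\bmu\,\mub$ equals the determinant of the Jacobian, namely $-\eta(f'(u_*)-\lam)/(c_1(u_*) c_2(u_*))$, whose sign immediately distinguishes the saddle regime ($\bmu\,\mub < 0$) from the stable/unstable node-or-spiral regime ($\bmu\,\mub > 0$). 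The node versus spiral criterion then reduces to the condition $\Delta \geq 0$ in exactly the form stated.
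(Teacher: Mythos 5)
Your proof is correct and follows exactly the route the paper intends: the paper gives no separate argument for Lemma~\ref{Lemma3.4} beyond displaying the eigenvalue formulas \eqref{3.22}, and your sign analysis of $-\eta\,\alpha\,b(u_*) \pm \sqrt{\Delta}$ (together with the determinant check $\bmu\,\mub = -\eta\,(f'(u_*)-\lam)/(c_1(u_*)c_2(u_*))$) is precisely the verification being left to the reader.
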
  


For negative dispersion coefficient $\delta$, that is, when $\eta =-1$, 
we see that both $u_1$ and $u_2$ are {\sl unstable points\/} 
which no trajectory can attain at $+\infty$, while $u_1$ is a stable 
point. So, in this case, we obtain immediately:

\begin{theorem}[Traveling waves for negative dispersion]
\label{Theorem3.5} 
Consider the diffusive-dispersive model \eqref{3.1} where 
the flux satisfies \eqref{3.2}. If $\eps>0$ and $\delta<0$,  
then only classical trajectories exist.  
\end{theorem}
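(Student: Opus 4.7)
The plan is to classify the three equilibria of the planar ODE \eqref{3.5}--\eqref{3.6} using Lemma~\ref{Lemma3.4} with $\eta = -1$, and then to rule out nonclassical connections by observing that the only possible nonclassical right state $u_2$ corresponds to an unstable equilibrium that no trajectory can approach as $y \to +\infty$.

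First I will determine the sign of $f'(u_*) - \lam$ at each of the three equilibria $u_2 < u_1 < u_0$ solving \eqref{3.8}. Introducing $H(u) := f(u) - f(u_0) - \lam(u - u_0)$, which vanishes precisely at these three points, Rolle's theorem gives zeros $a \in (u_2, u_1)$ and $b \in (u_1, u_0)$ of $H'(u) = f'(u) - \lam$. Because $f$ is concave-convex (see \eqref{3.2}), $f'$ is strictly decreasing on $(-\infty, 0)$ and strictly increasing on $(0, +\infty)$, so $a$ and $b$ are the only zeros of $f' - \lam$, and the sign pattern of $f' - \lam$ on $(-\infty, a)$, $(a, b)$, $(b, +\infty)$ is $+, -, +$. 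This yields
$$
f'(u_2) - \lam > 0, \qquad f'(u_1) - \lam < 0, \qquad f'(u_0) - \lam > 0,
$$
consistent with $\lam < f'(u_0)$ from \eqref{3.19}.

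Next I will apply Lemma~\ref{Lemma3.4} with $\eta = -1$: Case 3 identifies both $u_0$ and $u_2$ as unstable equilibria, while Case 2, which in the form $-(f'(u_1) - \lam) > 0$ holds at $u_1$, identifies $u_1$ as a saddle. Suppose now, for contradiction, that there exists a nonclassical traveling wave connecting $u_- = u_0$ to $u_+ = u_2$. By \eqref{3.7} its lift $(u(y), v(y))$ to the phase plane must satisfy $(u(y), v(y)) \to (u_2, 0)$ as $y \to +\infty$. But at $(u_2, 0)$ both eigenvalues in \eqref{3.22} have strictly positive real part, so by the local stable manifold theorem the stable set of $(u_2, 0)$ reduces to that single point, and no non-constant orbit can accumulate on $(u_2, 0)$ as $y \to +\infty$. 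This contradiction forces every traveling wave to terminate at $u_1$, which by \eqref{3.20} is precisely a classical connection.

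The equilibrium classification is routine sign bookkeeping via Rolle's theorem combined with the concave-convex structure of $f$, and I expect no real difficulty there. The only delicate ingredient is the standard dynamical-systems fact that an unstable equilibrium of a smooth planar ODE admits no non-trivial incoming orbit; once that consequence of Hartman--Grobman (or the stable manifold theorem) is quoted, the proof reduces to a direct application of Lemma~\ref{Lemma3.4}.
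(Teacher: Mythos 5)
Your proof is correct and follows essentially the same route as the paper: classify the three equilibria via Lemma~\ref{Lemma3.4} with $\eta=-1$ (so $u_0$ and $u_2$ are unstable, $u_1$ is a saddle) and observe that no trajectory can reach the unstable point $(u_2,0)$ as $y\to+\infty$, leaving only connections to $u_1$. You are in fact somewhat more careful than the paper's one-sentence justification, which contains a slip in labeling the equilibria, but the underlying argument is identical.
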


Some additional analysis (along similar lines) would 
be necessary to establish the existence of these classical 
trajectories and conclude that 
$$
\SS_\alpha (u_-) = \SS(u_-) := 
\begin{cases} 
\bigl[\vn(u_-), u_-\bigr],   &            u_- \geq 0  
\\
\\
\bigl[u_-, \vn(u_-)\bigr],   &            u_-  \leq 0  
\end{cases} 
\text{ when } \delta < 0, 
$$ 
which is the shock set already found in Section~2 when $\delta =0$.


We return to the case of a positive dispersion which is of main interest 
here. (From now on $\eta=+1$.) 
Since $g_u'(u, \lam)$ is positive at both $u=u_2$ and $u=u_0$, 
we have 
$$
\bmu(u_0) < 0 < \mub(u_0), \quad \bmu(u_2) < 0 < \mub(u_2), 
$$ 
and both points $u_2$ and $u_0$ are {\sl saddle.\/} 
On the other hand, since $g_u'(u_1, \lam) < 0$, the equilibrium $u_1$ 
is a stable point which may be a {\sl node\/} or a {\sl spiral.\/} 
These properties are the same as the ones already established 
for the equation with cubic flux.  
The following result is easily checked from the expressions \eqref{3.22}.

\begin{lemma}[Monotonicity properties of eigenvalues]
\label{Lemma3.6} 
In the range of parameters where $\bmu(u, \lam, \alpha)$ and 
$\mub(u; \lam,\alpha)$ remain real-valued, we have 
$$
{\del \bmu \over \del \lam} (u; \lam, \alpha) > 0,
\quad  
{\del \bmu \over \del \alpha} (u; \lam, \alpha) < 0
\quad 
{\del \mub \over \del \lam} (u; \lam, \alpha) < 0, 
$$
and, under the assumption $f'(u) - \lam >0$,  
$$
{\del \mub \over \del \alpha} (u; \lam, \alpha) < 0.
$$
\end{lemma}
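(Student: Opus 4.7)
The plan is to differentiate the explicit formulas \eqref{3.22} directly and to exploit the relation between the ``discriminant'' under the square root and the quantity $f'(u)-\lam$ in order to see the required signs. With $\eta=+1$ fixed, abbreviate
\[
B(u):=\frac{b(u)}{c_1(u)c_2(u)},\qquad D(u;\lam,\alpha):=1+\frac{4}{\alpha^2}\,\frac{c_1(u)c_2(u)}{b(u)^2}\bigl(f'(u)-\lam\bigr),
\]
so that $\bmu=\tfrac{\alpha}{2}B\,(-1-\sqrt D)$ and $\mub=\tfrac{\alpha}{2}B\,(-1+\sqrt D)$ on the region where $D\ge 0$ (reality of eigenvalues). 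Since $u$ is held fixed, only the $\lam$- and $\alpha$-dependence of $D$ matters.

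The $\lam$ derivatives are immediate. A short computation gives $\partial_\lam D=-\tfrac{4}{\alpha^2}\tfrac{c_1c_2}{b^2}$, so that
\[
\partial_\lam\bmu=-\tfrac{\alpha}{2}B\,\partial_\lam\sqrt{D}=\tfrac{1}{\alpha\, b(u)\sqrt D},\qquad \partial_\lam\mub=-\tfrac{1}{\alpha\, b(u)\sqrt D}.
\]
Since $b(u)>0$, $\alpha>0$, and $\sqrt D>0$ in the regime of real eigenvalues, the first and third inequalities follow directly.

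For the $\alpha$ derivatives, the key trick is to rewrite $f'(u)-\lam$ using $D$ itself: from the definition,
\[
f'(u)-\lam=\frac{\alpha^2\,b(u)^2}{4\,c_1(u)c_2(u)}\bigl(D-1\bigr).
\]
Then $\partial_\alpha\sqrt D=-\tfrac{2}{\alpha^{3}\sqrt D}\tfrac{c_1c_2}{b^2}(f'-\lam)\cdot 2=\dots$, and after substituting the above identity the algebra collapses to
\[
\partial_\alpha\bmu=\frac{B}{2}\Bigl(-1-\sqrt D+\frac{D-1}{\sqrt D}\Bigr)=-\frac{B}{2}\,\frac{\sqrt D+1}{\sqrt D},\qquad
\partial_\alpha\mub=\frac{B}{2}\,\frac{1-\sqrt D}{\sqrt D}.
\]
The expression for $\partial_\alpha\bmu$ is manifestly negative (no sign assumption on $f'(u)-\lam$ required), which yields the second inequality. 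For the fourth inequality, the assumption $f'(u)-\lam>0$ is exactly equivalent, by the identity above, to $D>1$, i.e.\ $1-\sqrt D<0$, so $\partial_\alpha\mub<0$ as claimed.

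The only mildly delicate point is the $\alpha$ calculation, where a naive differentiation leaves an expression whose sign depends on $f'(u)-\lam$; the substitution $f'(u)-\lam=\tfrac{\alpha^{2}b^{2}}{4c_1c_2}(D-1)$ is what produces the clean cancellation and exposes the correct sign. Everything else is a direct verification using the elementary inequalities $\sqrt D>0$ and (for the last claim) $\sqrt D>1$.
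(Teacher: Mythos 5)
Your computation is correct and is exactly the direct verification the paper has in mind: Lemma~\ref{Lemma3.6} is stated there with no written proof beyond the remark that it is ``easily checked from the expressions \eqref{3.22}'', and your differentiation of those formulas, together with the substitution $f'(u)-\lam=\tfrac{\alpha^{2}b^{2}}{4c_1c_2}(D-1)$ that makes the sign of $\partial_\alpha\mub$ hinge precisely on $D>1$, supplies that check cleanly (including the observation that $\partial_\alpha\bmu<0$ needs no sign hypothesis). The only point worth noting is that the differentiation implicitly requires $D>0$ rather than merely $D\geq 0$, but that is the natural reading of ``remain real-valued'' here.
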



To the state $u_0$ and the speed $\lam \in \bigl(\lna(u_0), \lz(u_0)\bigr)$ we associate 
the following function of the variable $u$, which 
will play an important role throughout,  
$$
G(u; u_0, \lam) := \int_{u_0}^u 
\bigl( g(z, \lam) - g(u_0, \lam) \bigr) \, {c_2(z) \over c_1(z)} \, dz. 
$$ 
Observe, using \eqref{3.10}, that the functions $G$ and $E$ are closely 
related:  
\be
G(u; u_0, \lam) = - E(u_0, u) \quad \text{ when } \quad \lam =\ab(u_0, u). 
\label{3.23}
\ee
Note also that the derivative $\del_u G(u; u_0, \lam)$ vanishes 
exactly at the equilibria $u_0$, $u_1$, and $u_2$ satisfying \eqref{3.8}.  
Using the function $G$ we rewrite now the main equations \eqref{3.5}-\eqref{3.6} 
in the form  
\be 
c_2(u) \, u_y = v, 
\label{3.24a}
\ee
\be 
c_2(u) \, v_y = - \alpha \, {b(u) \over c_1(u)} \, v + G_u'(u; u_0, \lam),
\label{3.24b}
\ee
which we will often use in the rest of the discussion. 

We collect now some fundamental properties of the function $G$.  

\begin{theorem}[Monotonicity properties of the function $G$]
\label{Theorem3.7} 
Fix some $u_0>0$ and $\lam \in \bigl(\lna(u_0), f'(u_0)\bigr)$
and consider the associated states $u_1$ and $u_2$. 
Then, the function $u \mapsto \tilde G(u):=G(u; u_0, \lam)$ satisfies the 
monotonicity properties 
$$
\aligned 
& \tilde G'(u) < 0,   \quad   u < u_2 \text{ or } u \in (u_1, u_0), 
\\
&  \tilde G'(u) > 0,   \quad  u \in (u_2,u_1) \text{ or } u > u_0.  
\endaligned 
$$ 
Moreover, if $\lam \in \bigl(\lna(u_0), \lz(u_0)\bigr)$ we have 
\be 
\tilde G(u_0) = 0 < \tilde G(u_2) < \tilde G(u_1), 
\label{3.25i}
\ee
while, if $\lam = \lz(u_0)$, 
\be
\tilde G(u_0) = \tilde G(u_2) = 0 < \tilde G(u_1)  
\label{3.25ii}
\ee
and finally, if $\lam \in \bigl(\lz(u_0), f'(u_0)\bigr)$, 
\be
\tilde G(u_2) < 0 = \tilde G(u_0) < \tilde G(u_1).
\label{3.25iii}
\ee
\end{theorem}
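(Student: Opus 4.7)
The plan is to reduce everything to a sign analysis of the single function $h(u) := g(u, \lam) - g(u_0, \lam)$. Since
$$
\tilde G(u) = \int_{u_0}^u h(z) \, \frac{c_2(z)}{c_1(z)} \, dz,
$$
differentiation gives $\tilde G'(u) = h(u) \, c_2(u)/c_1(u)$, and because $c_1, c_2 > 0$ the sign of $\tilde G'$ coincides with that of $h$. The entire monotonicity claim thus reduces to showing that $h$ is negative on $(-\infty, u_2) \cup (u_1, u_0)$ and positive on $(u_2, u_1) \cup (u_0, +\infty)$.

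To establish this sign pattern I would exploit the concave-convex structure: since $g'' = f''$, the function $h$ is strictly concave on $(-\infty, 0)$ and strictly convex on $(0, +\infty)$. By hypothesis $\lam \in (\lna(u_0), f'(u_0))$, so $h$ has exactly the three zeros $u_2 < \vn(u_0) < u_1 < u_0$. The growth assumption $\lim_{|u|\to\infty} f'(u) = +\infty$ forces $h(u) \to +\infty$ as $u \to +\infty$ and $h(u) \to -\infty$ as $u \to -\infty$ (the latter because $g'(u) = f'(u) - \lam$ diverges to $+\infty$, pushing $g$ to $-\infty$ at the left end). Since a strictly convex (respectively concave) function has at most two zeros, each zero of $h$ on its convex or concave branch is transverse, and $h$ therefore alternates sign across each root. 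Combined with the asymptotic signs this produces exactly the pattern claimed.

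For the values at the equilibria, $\tilde G(u_0) = 0$ is immediate and $\tilde G(u_1) = -\int_{u_1}^{u_0} \tilde G'(z) \, dz > 0$ follows at once from $\tilde G' < 0$ on $(u_1, u_0)$. The delicate quantity is $\tilde G(u_2)$, which I would handle via identity \eqref{3.23}: because $u_2$ satisfies $g(u_2, \lam) = g(u_0, \lam)$, one has $\lam = \ab(u_0, u_2)$, so $\tilde G(u_2) = -E(u_0, u_2)$. The three sub-cases for $\lam$ correspond, by monotone dependence of $u_2$ on $\lam$, to $u_2 \in (\vfz(u_0), \vn(u_0))$, $u_2 = \vfz(u_0)$, and $u_2 < \vfz(u_0)$ respectively, and Lemma~\ref{Lemma3.2} then yields $E(u_0, u_2) < 0$, $= 0$, $> 0$ in these three regimes, which gives exactly the announced sign relations for $\tilde G(u_2)$. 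The strict inequality $\tilde G(u_2) < \tilde G(u_1)$ holds uniformly because $\tilde G(u_1) - \tilde G(u_2) = \int_{u_2}^{u_1} \tilde G'(z) \, dz > 0$ by the monotonicity just established. The main obstacle is the third regime $\lam > \lz(u_0)$: Lemma~\ref{Lemma3.2} as stated only covers $u_+ \in (\vfz(u_-), u_-)$, so to conclude $E(u_0, u_2) > 0$ for $u_2 < \vfz(u_0)$ one must differentiate \eqref{3.10} in $u_+$ and verify that $E(u_0, \cdot)$ is strictly monotone past its single interior zero $\vfz(u_0)$, thereby extending Lemma~\ref{Lemma3.2} by sign-continuation.
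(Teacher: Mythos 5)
Your proof is correct and follows essentially the same route as the paper: the sign of $\tilde G'$ is that of $g(u,\lam)-g(u_0,\lam)$ (which the paper reads off geometrically from the graph of $f$, and you justify via the concave-convex structure), and the sign of $\tilde G(u_2)$ is obtained from the identity $\tilde G(u_2)=-E(u_0,u_2)$ together with Lemma~\ref{Lemma3.2}. Your worry about the regime $u_2<\vfz(u_0)$ is unnecessary: since Lemma~\ref{Lemma3.2} states both characterizations as equivalences, a state $u_+<\vfz(u_-)$ can satisfy neither $E=0$ nor $E<0$, so $E(u_0,u_2)>0$ already follows without any sign-continuation argument.
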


%


\begin{proof} The sign of $\tilde G'$ is the same as  
the sign of the function 
$$
g(u, \lam) - g(u_0, \lam)
= (u- u_0) \, \Big({f(u) - f(u_0) \over u - u_0} - \lam\Big).
$$ 
So, the sign of $\tilde G'$ is easy determined geometrically 
from the graph of the function $f$. 
To derive \eqref{3.25i}--\eqref{3.25iii} note that $\tilde G(u_0)= 0$ and (by the monotonicity
properties above) $\tilde G(u_1) > 
\tilde G(u_0)$. 
To complete the argument we only need the sign of $\tilde G(u_2)$. 
But by \eqref{3.23} we have $\tilde G(u_2) = -E(u_0, u_2)$ whose sign is given by Lemma~\ref{Lemma3.2}. 
\end{proof} 


We conclude this section with the special case that the diffusion is 
zero. Note that the shock set below {\sl is not\/} 
the obvious limit from \eqref{3.14}.

\begin{theorem}[Dispersive traveling waves]
\label{Theorem3.8} 
Consider the traveling wave equation $\eqref{3.4}$ 
in the limiting case $\alpha =0$ ({\rm not included in Theorem~\ref{Theorem3.3}})
under the assumption that the flux $f$ satisfies \eqref{3.2}. 
Then, the corresponding {\sl shock set} reduces to   
$$ 
\SS_0(u_-) = \bigl\{\vfz(u_-), u_-\bigr\}, \quad u_- \in \RR. 
$$ 
\end{theorem}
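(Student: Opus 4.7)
\medskip

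\noindent\textbf{Proof proposal.} With $\alpha=0$ the system \eqref{3.24a}--\eqref{3.24b} is conservative. The plan is to extract this conservation law, read off the possible end states from it, and then verify that the nontrivial connection is actually realized. Concretely, I would multiply \eqref{3.24b} with $\alpha=0$ by $u_y = v/c_2(u)$ and integrate, obtaining the first integral
\be
\tfrac{1}{2}\, v(y)^2 \;=\; G(u(y);\, u_-, \lam), \qquad y\in\RR,
\label{energy}
\ee
where the constant of integration vanishes by the boundary conditions $v(-\infty)=0$ and $u(-\infty)=u_-$, since $G(u_-;u_-,\lam)=0$.

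\smallskip

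Next I would deduce the candidate end states. Any traveling wave connecting $u_-$ to some $u_+\neq u_-$ must have $v(+\infty)=0$ and $u(+\infty)=u_+$, so \eqref{energy} forces $G(u_+;u_-,\lam)=0$ with $\lam=\ab(u_-,u_+)$. By identity \eqref{3.23} this is exactly $E(u_-,u_+)=0$, and Lemma~\ref{Lemma3.2} then gives $u_+=\vfz(u_-)$. Adding the trivial constant trajectory $u\equiv u_-$ (which accounts for $u_+=u_-$ in the shock set), this shows $\SS_0(u_-)\subseteq\{\vfz(u_-),\,u_-\}$.

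\smallskip

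For the reverse inclusion I would construct the nonclassical profile explicitly from \eqref{energy}. Choosing $\lam=\lz(u_-)$ so that $u_+=\vfz(u_-)$ is the third equilibrium (labeled $u_2$), Theorem~\ref{Theorem3.7}, equation \eqref{3.25ii}, yields $\tilde G(u_0)=\tilde G(u_2)=0$ and $\tilde G(u)>0$ for $u\in(u_2,u_0)\setminus\{u_1\}$, with $\tilde G(u_1)>0$. Thus the branch
$$
v(u) \;=\; -\sqrt{2\,G(u;u_-,\lz(u_-))}, \qquad u\in[\vfz(u_-),\,u_-],
$$
is well defined, and the planar ODE $u_y=v(u)/c_2(u)$ admits a monotone decreasing solution $u(\cdot)$. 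Since $u_-$ and $\vfz(u_-)$ are both saddle points of the vector field $K$ (their eigenvalues from \eqref{3.22} with $\alpha=0$ are $\pm\sqrt{(f'(u_*)-\lam)/(c_1c_2)}$, real because $f'(u_\pm)>\lam$ in the concave-convex setting), the trajectory spends infinite $y$-time near each endpoint, so it is defined for all $y\in\RR$ and connects $u_-$ at $-\infty$ to $\vfz(u_-)$ at $+\infty$. This gives $\vfz(u_-)\in\SS_0(u_-)$ and completes the argument.

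\smallskip

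I expect the only real subtlety to be step three: verifying that $G\geq 0$ on the entire interval $[\vfz(u_-),u_-]$ so that the square root in \eqref{energy} defines a smooth trajectory without meeting an obstruction at the intermediate equilibrium $u_1$ (where $\tilde G$ is merely a local maximum, not a zero). This is precisely the content of \eqref{3.25ii} in Theorem~\ref{Theorem3.7}, which ensures that $v(u_1)\neq 0$ and the trajectory sweeps monotonically through $u_1$ without stalling.
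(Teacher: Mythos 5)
Your proposal is correct and follows essentially the same route as the paper: derive the first integral $\tfrac{1}{2}v^2=G(u;u_-,\lam)$, use \eqref{3.23} and Lemma~\ref{Lemma3.2} to force $u_+=\vfz(u_-)$, and then reconstruct the connecting orbit from $v=-\sqrt{2G}$ using the positivity of $G$ on $(\vfz(u_-),u_-)$ guaranteed by Theorem~\ref{Theorem3.7}. Your added remark on the saddle structure of the endpoints is a harmless supplement to the paper's change-of-variable argument $dy=c_2(u)\,du/\bar v(u)$.
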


\begin{proof} Suppose that there exists a trajectory 
connecting a state $u_->0$ to a state $u_+ \neq u_-$ for the speed 
$\lam = \ab(u_-, u_+)$ and satisfying (see \eqref{3.24a}-\eqref{3.24b}) 
\be
\aligned 
&  c_2(u) \, u_y = v, \\ 
&  c_1(u) \, v_y = g(u, \lam) - g(u_-, \lam). 
\endaligned 
\label{3.26}
\ee    
Multiplying the second equation in 
\eqref{3.26} by $v/c_1(u) = c_2(u) \, u_y/ c_1(u)$, we find 
$$
{1 \over 2} \, \bigl(v^2\bigr)_y
= 
\bigl( g(u, \lam) - g(u_-, \lam) \bigr) \, {c_2(u) \over c_1(u)} \, u_y
$$
and, after integration over some interval $(-\infty, y]$,  
\be
{1 \over 2} \, v^2(y) = G(u(y); u_-, \lam), \quad y \in \RR. 
\label{3.27}
\ee 

Letting $y \to +\infty$ in \eqref{3.27} and using that $v(y) \to 0$ 
we obtain 
$$ 
G(u_+; u_-, \lam) = 0 
$$ 
which, by \eqref{3.23}, is equivalent to 
$$
E(u_+, u_-) = 0. 
$$
Using Lemma~\ref{Lemma3.2} we conclude that the right-hand state $u_+$ is 
uniquely determined, by the zero-entropy dissipation function: 
\be
u_+ = \vfz(u_-), \quad \lam = \lz(u_-). 
\label{3.28}
\ee 

Then, by assuming \eqref{3.28} and $u_->0$, Theorem~\ref{Theorem3.7} implies that the function 
$u \mapsto G(u; u_-, \lam)$ remains 
strictly positive for all $u$ (strictly) between $u_+$ and $u_-$. 
Since $v<0$ we get from \eqref{3.27} 
\be 
v(y) = - \sqrt{2 \, G(u(y); u_-, \lam)}. 
\label{3.29}
\ee
In other words, we obtain the trajectory
in the $(u,v)$ plane: 
$$ 
v = \bar v(u) = - \sqrt{2 \, G(u; u_-, \lam)}, 
\quad
u \in [u_+, u_-], 
$$
supplemented with the boundary conditions 
$$
\bar v(u_-) = \bar v(u_+) = 0. 
$$ 
Clearly, the function $\bar v$ is well-defined 
and satisfies $\bar v(u) < 0$ for all ${u \in (u_+, u_-)}$. 
Finally, based on the change of variable $y \in [-\infty, +\infty] \mapsto u=u(y)
\in [u_+, u_-]$ given by 
$$ 
dy = {c_2(u) \over \bar v(u)} \, du, 
$$
we immediately recover from the curve $v=\bar v(u)$ the (unique) trajectory 
$$
y \mapsto \bigl(u(y), v(y)\bigr). 
$$
This completes the proof of Theorem~\ref{Theorem3.8}.
\end{proof}


\section{Traveling waves corresponding to a given speed}

We prove in this section that, given $u_0$, $u_2$, and ${\lam = \ab(u_0, u_2)}$ 
in the range (see \eqref{3.21})   
\be 
u_2 \in \bigl[\vfz(u_0), \vn(u_0)\bigr), 
\quad 
\lam \in \bigl(\lna(u_0), \lz(u_0)\bigr],   
\label{4.1}
\ee
a nonclassical connection always exists if the ratio $\alpha$ 
{\sl is chosen appropriately.\/} As we will show in the next section 
this result is the key step in the proof of Theorem~\ref{Theorem3.3}.  
The main existence result proven in the present section is stated as 
follows.

\begin{theorem}[Nonclassical trajectories for a fixed speed]
\label{Theorem4.1} 
Consider two states $u_0>0$ and $u_2<0$ associated with a speed 
$$
\lam= \ab(u_0, u_2) \in \bigl(\lna(u_0), \lz(u_0)\bigr].  
$$ 
Then, there exists a unique value $\alpha \geq 0$ such that 
$u_0$ is connected to $u_2$ by a diffusive-dispersive traveling wave solution. 
\end{theorem}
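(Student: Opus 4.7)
The plan is to reduce Theorem~\ref{Theorem4.1} to a one-parameter shooting argument for a saddle-to-saddle orbit between $(u_0,0)$ and $(u_2,0)$ in the phase plane \eqref{3.5}--\eqref{3.6}. The boundary case $\lam=\lz(u_0)$ coincides with $u_2=\vfz(u_0)$ and $G(u_2;u_0,\lam)=0$, and in that regime Theorem~\ref{Theorem3.8} already furnishes the unique purely dispersive connection at $\alpha=0$. Hence I may assume $\lam\in(\lna(u_0),\lz(u_0))$, so that Theorem~\ref{Theorem3.7} gives $G(u_2;u_0,\lam)>0$ strictly. Multiplying \eqref{3.24b} by $v/c_2(u)$ and using \eqref{3.24a}, the energy function
\begin{equation*}
H(y):=\tfrac{1}{2}v(y)^2-G(u(y);u_0,\lam)
\end{equation*}
satisfies $H'(y)=-\alpha\,b(u)\,v^2/(c_1(u)c_2(u))\le 0$. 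Evaluated on any orbit connecting $(u_0,0)$ to $(u_2,0)$, this yields the balance $\alpha\int_{\RR}b(u)v^2/(c_1c_2)\,dy=G(u_2;u_0,\lam)>0$, which rules out $\alpha=0$ in the present regime and provides an a priori energy bound.

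By Lemma~\ref{Lemma3.4} both $(u_0,0)$ and $(u_2,0)$ are saddle points for every $\alpha\ge 0$, so by Theorem~\ref{Theorem2.2} the branch $W^u_\alpha$ of the unstable manifold at $(u_0,0)$ entering $\{v<0\}$ is well-defined and smooth in $\alpha$. Write $(u_\alpha(y),v_\alpha(y))$ for the corresponding orbit and introduce the shooting quantity
\begin{equation*}
\Phi(\alpha):=\inf_{y\in\RR}u_\alpha(y).
\end{equation*}
Since the infimum, if finite and attained, forces $v=0$, and $(u_2,0)$ is the only admissible equilibrium strictly below $u_0$ on this branch, the condition $\Phi(\alpha)=u_2$ is equivalent to the sought saddle-saddle connection. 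At $\alpha=0$ the orbit is $v=-\sqrt{2G(u;u_0,\lam)}$, and $G$ is strictly positive for every $u<u_0$ in this regime (Theorem~\ref{Theorem3.7} together with $G(u_2)>0$), so the orbit is strictly monotone decreasing and $\Phi(0)=-\infty<u_2$. For $\alpha$ large I invoke a singular-perturbation argument: \eqref{3.24a}--\eqref{3.24b} admit the slow manifold $v\simeq(g(u)-g(u_0))\,c_2(u)/(\alpha b(u))$, on which $u$ obeys the scalar equation $u_y=(g(u)-g(u_0))/(\alpha b(u))$ whose unique orbit leaving $u_0$ decreases monotonically and converges to the stable equilibrium $u_1>u_2$. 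Thus $\Phi(\alpha)\to u_1$ as $\alpha\to\infty$, so in particular $\Phi(\alpha)>u_2$ for all sufficiently large $\alpha$. Continuity of $\Phi$---via smooth dependence of $W^u_\alpha$ on $\alpha$ combined with the a priori energy bound to rule out blow-up---and the intermediate value theorem then produce at least one $\alpha^*\ge 0$ with $\Phi(\alpha^*)=u_2$.

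The main obstacle is the strict monotonicity $\Phi'(\alpha)>0$, needed to upgrade existence to uniqueness. I plan to prove it by differentiating \eqref{3.24a}--\eqref{3.24b} with respect to $\alpha$ and analyzing the linear variational system for $(\partial_\alpha u,\partial_\alpha v)$; this system is forced by the dissipative term $-b(u)v/(c_1(u)c_2(u))$ and is initialized through the $\alpha$-derivative of the unstable eigendirection at $(u_0,0)$, whose sign is controlled by Lemma~\ref{Lemma3.6} since $\mub(u_0;\lam,\alpha)$ is strictly decreasing in $\alpha$. A Gronwall-type comparison on the compact interval $[u_2,u_0]$ should then propagate $\partial_\alpha u_\alpha>0$ up to the minimizing $y$, giving $\Phi'(\alpha)>0$ and hence uniqueness of $\alpha^*$. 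The delicate part of this variational analysis is uniform control in $\alpha$ when the equilibrium $u_1$ transitions from spiral to node (cf.\ Lemma~\ref{Lemma3.4}), which changes the qualitative geometry of $W^u_\alpha$ between $u_1$ and $u_2$; this is where I expect the real technical work of the proof to lie.
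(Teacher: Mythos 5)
Your overall strategy---a one-parameter shooting argument between the two saddles, anchored at $\alpha=0$ by the dispersive orbit and closed by an intermediate-value argument---is the right one and is essentially what the paper does. But your specific shooting functional $\Phi(\alpha):=\inf_y u_\alpha(y)$ has a genuine defect: it is not continuous, so the intermediate value theorem cannot be applied to it as stated. For $\alpha$ below the critical value the unstable orbit of $(u_0,0)$ passes below $(u_2,0)$ and then escapes ($G_u'<0$ for $u<u_2$ drives $v$ further negative), so $\Phi(\alpha)=-\infty$ there, while $\Phi(\alpha^*)=u_2$; thus $\Phi$ jumps from $-\infty$ to $u_2$ from the left. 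Your claim that the energy bound rules out blow-up contradicts your own observation that $\Phi(0)=-\infty$: the inequality $\tfrac12 v^2\le G(u)$ does not confine the orbit because $G(u)\to+\infty$ as $u\to-\infty$. The standard repair---and the paper's actual construction---is a \emph{two-sided} shooting: follow the unstable branch of $(u_0,0)$ only over the compact interval $[u_1,u_0]$ to a value $V_-(\alpha)$ on the section $\{u=u_1\}$, follow the stable branch of $(u_2,0)$ backward over $[u_2,u_1]$ to $V_+(\alpha)$, and apply the IVT to the genuinely continuous gap $W(\alpha)=V_+(\alpha)-V_-(\alpha)$. (Monotonicity of each piece, needed to write them as graphs over $u$, is the content of the paper's Lemma~\ref{Lemma4.3}.) With this functional, the large-$\alpha$ step also becomes elementary: $V_-(\alpha)^2\le 2G(u_1;u_0,\lam)$ stays bounded while $V_+(\alpha)\le-\kappa\,\alpha\,(u_1-u_2)\to-\infty$, so no slow-manifold or singular-perturbation machinery is required.

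The more serious gap is uniqueness, which you explicitly leave as a plan ($\Phi'(\alpha)>0$ via the variational equation plus Gronwall) and correctly identify as the hard part of your route; the spiral/node transition at $u_1$ would indeed make that estimate painful. The paper avoids it entirely with a short comparison argument: if two connections existed for $\alpha<\alpha^*$, Lemma~\ref{Lemma3.6} (monotonicity of $\mub(u_0;\lam,\cdot)$ and $\bmu(u_2;\lam,\cdot)$ in $\alpha$) forces the two orbits, viewed as graphs $v(u)$ and $v^*(u)$, to cross at some $u_3\in(u_2,u_0)$ with $v(u_3)=v^*(u_3)\neq 0$ and $\tfrac{dv^*}{du}(u_3)\ge\tfrac{dv}{du}(u_3)$; subtracting the two copies of the phase-plane equation
$$
v\,\frac{dv}{du}+\alpha\,\frac{b(u)}{c_1(u)}\,v=G_u'(u;u_0,\lam)
$$
at $u_3$ yields $v(u_3)\bigl(\tfrac{dv}{du}(u_3)-\tfrac{dv^*}{du}(u_3)\bigr)=(\alpha^*-\alpha)\,\tfrac{b(u_3)}{c_1(u_3)}\,v(u_3)$, whose two sides have opposite signs. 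You should replace your monotonicity program with this argument (or supply the variational analysis in full); as it stands, the uniqueness half of the theorem is not proved.
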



By Lemma~\ref{Lemma3.4}, $u_0$ is a saddle point and 
we have $\mub(u_0) > 0$ and from Theorem~\ref{Theorem2.2}
 it 
follows that there are two trajectories leaving from $u_0$ at $y = 
-\infty$, both of them satisfying 
\be
\lim_{y\to -\infty} {v(y)\over {u(y)-u_0}}=\mub(u_0; \lam, \alpha)\, c_2(u_0). 
\label{4.2}
\ee
One trajectory approaches $(u_0,0)$ in the quadrant 
$Q_1= \bigl\{u>u_0, \, v>0\bigr\}$, the other in the quadrant 
$Q_2 = \bigl\{u<u_0, \, v<0\bigr\}$. On the other hand, 
$u_2$ is also a saddle point and there exist two
trajectories reaching $u_2$ at $y=+\infty$, both of them satisfying
\be
\lim_{ y \to + \infty} { v(y) \over u(y) - u_2 } 
 = \bmu(u_2; \lam, \alpha) \, c_2(u_2). 
\label{4.3}
\ee
One trajectory approaches $(u_2,0)$ in the quadrant 
$Q_3 = \bigl\{u>u_2, \, v<0\bigr\}$, the other in the quadrant 
$Q_4 = \bigl\{u<u_2, \, v>0 \bigr\}$. 
 

\begin{lemma}
\label{Lemma4.2}
A traveling wave solution connecting $u_0$ to $u_2$ must 
leave the equilibrium $(u_0,0)$ at $y= -\infty$ in the quadrant $Q_2$,
and reach $(u_2,0)$ in the quadrant $Q_3$ at $y= +\infty$. 
\end{lemma}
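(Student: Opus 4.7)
The plan is to rule out $Q_1$ (for the departure from $u_0$) and $Q_4$ (for the arrival at $u_2$) by a first-crossing argument on the line $v = 0$, using the sign of $G_u'(\cdot;u_0,\lam)$ supplied by Theorem~\ref{Theorem3.7}. The preceding discussion has already identified $Q_1, Q_2$ as the two candidate exit quadrants at $y = -\infty$ and $Q_3, Q_4$ as the two candidate entry quadrants at $y = +\infty$, so only one option must be excluded in each case.

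Suppose for contradiction that the trajectory departs $u_0$ into $Q_1$. Then $u > u_0$ and $v > 0$ for all $y$ in some interval $(-\infty, y_0)$, and by \eqref{3.24a} $u$ is strictly increasing there. Let $y^* \in (y_0, +\infty]$ be the first time at which $v(y^*) = 0$; if $y^* < +\infty$, then $v$ passes from strictly positive to zero, hence $v_y(y^*) \leq 0$. But at $v = 0$ equation \eqref{3.24b} reduces to $v_y(y^*) = G_u'(u(y^*); u_0, \lam)/c_2(u(y^*))$, and since the monotonicity in $y$ ensures $u(y^*) > u_0$, Theorem~\ref{Theorem3.7} gives $G_u'(u(y^*); u_0, \lam) > 0$, a contradiction. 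Hence $v > 0$ for every $y$, so $u$ is strictly increasing throughout, and $u(y) > u_0 > u_2$ for all $y$, incompatible with $u(+\infty) = u_2$.

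The exclusion of $Q_4$ is the mirror image under the time reversal $\tau = -y$. If the trajectory reached $u_2$ from $Q_4$, then near $\tau = -\infty$ we would have $u < u_2$ and $v > 0$ with $u$ strictly decreasing in $\tau$; for $u$ to reach $u_0 > u_2$ eventually, the sign of $v$ must change, and at the first zero $\tau^*$ of $v$ one has $dv/d\tau(\tau^*) \leq 0$. Yet the reversed form of \eqref{3.24b} yields $dv/d\tau(\tau^*) = -G_u'(u(\tau^*); u_0, \lam)/c_2(u(\tau^*))$, which is strictly positive because $u(\tau^*) < u_2$ and Theorem~\ref{Theorem3.7} gives $G_u' < 0$ in that region. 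Contradiction.

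Neither step is technically hard; the one delicate point is that the diffusion term $-\alpha b(u) v / (c_1(u) c_2(u))$ in \eqref{3.24b} vanishes at $v = 0$, so that the sign of $v_y$ at an axis-crossing is dictated purely by $G_u'$ and is insensitive to the value of $\alpha \geq 0$, and that the monotonicity of $u$ up to the first crossing keeps $u$ in the region where Theorem~\ref{Theorem3.7} fixes this sign. The borderline case $\lam = \lz(u_0)$ (where $G(u_2) = 0$) requires no separate treatment.
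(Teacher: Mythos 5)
Your proof is correct, and the two halves relate differently to the paper's own argument. For the exclusion of $Q_4$ you are essentially reproducing the paper's reasoning: the paper locates a local minimum $y_1$ of $u$ with $u(y_1)<u_2$, reads off $v(y_1)=0$, $v_y(y_1)\geq 0$ from \eqref{3.24a}--\eqref{3.24b}, and contradicts $G_u'<0$ on $\{u<u_2\}$; your ``last axis crossing in reversed time'' is the same sign computation at essentially the same point. For the exclusion of $Q_1$, however, your route is genuinely different. The paper integrates $v\,v_y$ to obtain the identity $\tfrac{1}{2}v^2(y_0)+\alpha\int_{-\infty}^{y_0} v^2\,\tfrac{b(u)}{c_1(u)c_2(u)}\,dy = G(u(y_0);u_0,\lam)=0$ at the first return to $u=u_0$, deduces $v(y_0)=0$, and then invokes uniqueness for the Cauchy problem at the equilibrium $(u_0,0)$. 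You instead make a pointwise sign argument at the first zero of $v$: since $u$ has increased strictly past $u_0$ there, $G_u'>0$ forces $v_y>0$, which is incompatible with $v$ descending to zero from above. Your version is more elementary (no integration, no appeal to ODE uniqueness) and proves slightly more, namely that the $Q_1$ branch never returns to the $u$-axis at all; the paper's energy identity has the advantage of being the same computation it reuses later (e.g.\ in \eqref{4.4} and in the proof of Theorem~\ref{Theorem5.1}). One point worth stating explicitly in your write-up is that Theorem~\ref{Theorem3.7} is applied with $\lam\in\bigl(\lna(u_0),\lz(u_0)\bigr]$, which lies inside the interval $\bigl(\lna(u_0),f'(u_0)\bigr)$ where its monotonicity statements hold; with that noted, the argument is complete, including the borderline case $\lam=\lz(u_0)$ as you observe.
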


\begin{proof} Consider the trajectory leaving from the quadrant 
$Q_1$, that is, satisfying $u>u_0$ and $v>0$ in a neighborhood of 
the point $(u_0,0)$. By contradiction, suppose it would reach 
the state $u_2$ at $+\infty$. Since $u_2 < u_0$ 
by continuity there would exist $y_0$ such that 
$$
u(y_0) = u_0.
$$
Multiplying \eqref{3.24b} by $u_y = v /c_2(u)$ we find 
$$
\bigl( v^2/ 2 \bigr)_y 
+ 
\alpha \, {b(u)\over c_1(u) \, c_2(u)}  \, v^2
= 
G_u'(u; u_0, \lam) \, u_y.  
$$
Integrating over $(-\infty, y_0]$ we arrive at 
\be 
{v^2(y_0) \over 2}  
+ \alpha \int_{-\infty}^{y_0} v^2 \, {b(u)\over c_1(u) \, c_2(u)}  \, dy 
= G(u(y_0); u_0, \lam) = 0.
\label{4.4}
\ee
Therefore $v(y_0)=0$ and, since $u(y_0) = u_0$, 
a standard uniqueness theorem for the Cauchy problem associated
with \eqref{3.24a}-\eqref{3.24b} implies that $u \equiv u_0$ and $v \equiv
0$ on $\RR$. This contradicts the assumption that the trajectory
would connect to $u_2$ at $+\infty$.


The argument around the equilibrium $(u_2,0)$ is somewhat different. 
Suppose that the trajectory satisfies $u<u_2$ and $v>0$ in a neighborhood of 
the point $(u_2,0)$. There would exist some value $y_1$ achieving a 
{\sl local minimum,\/} that is, such that
$$
u(y_1)<u_2, \quad u_y(y_1)=0, \quad u_{yy}(y_1) \geq 0.
$$
From \eqref{3.24a} we would obtain  
$v(y_1)=0$ and, by differentiation of \eqref{3.24a}, 
$$
v_y(y_1)=u_{yy}(y_1) \,c_2(u(y_1)) \geq 0.
$$ 
Combining the last two relations with \eqref{3.24b} we would obtain 
$$
G_u'(u(y_1); u_0, \lam) \geq 0
$$ 
which is in contradiction with Theorem~\ref{Theorem3.7} since $u(y_1) < u_2$
and ${G_u'(u(y_1); u_0, \lam) < 0}$. 
\end{proof} 

Next, we determine some intervals in which the traveling waves 
are always monotone.

\begin{lemma}
\label{Lemma4.3}
Consider a trajectory $u = u(y)$ leaving from $u_0$ at 
$-\infty$ and denote by $\underline \xi$ the largest value such that  
$u_1 < u(y) \leq  u_0$ for all $y \in (-\infty, \underline \xi)$ 
and $u(\underline \xi) = u_1$. Then, we have 
$$
u_y < 0 \quad \text{ on the interval } (-\infty, \underline \xi). 
$$
Similarly, if $u = u(y)$ is a trajectory connecting to $u_2$ at 
$+\infty$, denote by $\overline \xi$ the smallest value such that  
$u_2  \leq u(y) <  u_1$ for all $y \in (\overline \xi, +\infty)$ 
and $u(\overline \xi) = u_1$. 
Then, we have 
$$
u_y < 0 \quad \text{ on the interval } (\overline \xi, +\infty).
$$  
\end{lemma}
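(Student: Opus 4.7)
The plan is to argue by contradiction in both halves, using the evolution equation \eqref{3.24b} evaluated at a putative critical point of $v$, together with the sign information on $G_u'(\cdot\,; u_0, \lam)$ supplied by Theorem~\ref{Theorem3.7}. The key observation is that \eqref{3.24a} gives $\operatorname{sgn} u_y = \operatorname{sgn} v$, so ``$u_y<0$'' is equivalent to ``$v<0$'', and that at any $y$ where $v(y)=0$ the equation \eqref{3.24b} reduces to $c_2(u)\, v_y = G_u'(u; u_0, \lam)$, so the sign of $v_y$ at a zero of $v$ is entirely determined by $G_u'$.

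For the first assertion, Lemma~\ref{Lemma4.2} forces the trajectory leaving from $u_0$ at $-\infty$ to enter the quadrant $Q_2=\{u<u_0,\ v<0\}$, so $v<0$ (hence $u_y<0$) for all sufficiently large negative $y$. I would let $y_*$ be the smallest point of $(-\infty, \underline\xi)$, if any, at which $u_y$ vanishes; then $v(y_*)=0$ with $v<0$ on $(-\infty,y_*)$, so $v_y(y_*)\ge 0$. Because $u$ is strictly decreasing on $(-\infty,y_*)$ and tends to $u_0$ at $-\infty$, one has $u(y_*)\in(u_1,u_0)$. Theorem~\ref{Theorem3.7} then yields $G_u'(u(y_*);u_0,\lam)<0$, and evaluating \eqref{3.24b} at $y_*$ gives $c_2(u(y_*))\,v_y(y_*)<0$, contradicting $v_y(y_*)\ge 0$.

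For the second assertion, Lemma~\ref{Lemma4.2} puts the trajectory in $Q_3=\{u>u_2,\ v<0\}$ near $y=+\infty$, so again $v<0$ for all sufficiently large $y$. I would this time pick $y^*$ to be the \emph{largest} point of $(\overline\xi,+\infty)$ at which $u_y$ vanishes; then $v(y^*)=0$ with $v<0$ on $(y^*,+\infty)$, so $v_y(y^*)\le 0$. By uniqueness for the system \eqref{3.24a}--\eqref{3.24b}, the trajectory cannot reach the equilibrium $(u_2,0)$ at a finite value of $y$, so $u(y^*)\in(u_2,u_1)$, and Theorem~\ref{Theorem3.7} now gives $G_u'(u(y^*);u_0,\lam)>0$; equation \eqref{3.24b} at $y^*$ yields $v_y(y^*)>0$, a contradiction.

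Neither half presents a serious difficulty: the ODE structure and Theorem~\ref{Theorem3.7} essentially do the work. The only point requiring some care is the bookkeeping regarding \emph{which} critical point to select ($y_*$ as a minimum on the $-\infty$ side, $y^*$ as a maximum on the $+\infty$ side) so that the expected sign of $v_y$ forced by the one-sided behavior of $v$ is the one that contradicts the sign coming from $G_u'$; once the right extremum is chosen, the contradiction is immediate.
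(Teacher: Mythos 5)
Your argument is correct and is essentially the paper's own proof: both proceed by contradiction, evaluating \eqref{3.24b} at a critical point of $u$ (where $v=0$, so $c_2(u)\,v_y=G_u'(u;u_0,\lam)$) and contradicting the sign of $G_u'$ supplied by Theorem~\ref{Theorem3.7}. Your additional care in selecting the extremal zero of $v$ and in excluding the endpoint values $u_0$ and $u_2$ merely tightens the same argument.
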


In other words, a trajectory cannot change its monotonicity before 
reaching the value $u_1$.

\begin{proof} 
We only check the first statement, the proof of the second 
one being similar. By contradiction, 
there would exist $y_1\in (-\infty, \underline \xi)$ such that 
$$
u_y(y_1)=0, \quad u_{yy}(y_1)\geq 0, 
\quad 
u_1 < u(y_1) \leq u_0. 
$$
Then, using the equation \eqref{3.24b} 
would yield $G_u'(u(y_1); u_0, \lam) \geq 0$, which is in 
contradiction with the monotonicity properties in Theorem~\ref{Theorem3.7}. 
\end{proof}

\begin{proof}[Proof of Theorem~\ref{Theorem4.1}] 
For each $\alpha\geq 0$ we consider the orbit leaving from $u_0$ 
and satisfying $u<u_0$ and $v<0$ in a neighborhood of $(u_0,0)$. 
This trajectory reaches the line  $\bigl\{ u = u_1 \bigr\}$ 
for the ``first time'' 
at some point denoted by $(u_1, V_-(\alpha))$.
In view of Lemma~\ref{Lemma4.3} this part of trajectory is {\sl the graph of a function\/}  
$$
[u_1, u_0] \owns u \mapsto v_-(u; \lam,\alpha)  
$$
with of course $v_-(u_1; \lam, \alpha) = V_-(\alpha)$. 
Moreover, by standard theorems on differential equations, $v_-$ is  
a smooth function with respect to its argument 
$(u; \lam,\alpha) \in [u_1, u_0]\times\bigl(\lna(u_0),
\lz(u_0)\bigr]\times [0, +\infty)$. 

Similarly, for each $\alpha\geq 0$ we consider 
the orbit arriving at $u_2$ and satisfying $u>u_2$ and $v<0$ 
in a neighborhood of $(u_2,0)$. 
This trajectory reaches the line $\bigl\{ u = u_1 \bigr\}$ for the ``first time'' 
{\sl as $y$ decreases from $+ \infty$\/} at some point $(u_1, V_+(\alpha))$. 
By Lemma~\ref{Lemma4.3} this trajectory is the {\sl graph of a function\/}  
$$ 
[u_2, u_1] \owns u \mapsto v_+(u; \lam,\alpha). 
$$ 
The mapping $v_+$ depends smoothly upon $(u,\lam,\alpha) \in 
[u_2, u_1]\times\bigl(\lna(u_0), \lz(u_0)\bigr]\times [0, +\infty)$.

For each of these curves $u\mapsto  v_-(u)$ and $u\mapsto  v_+(u)$ 
we derive easily from \eqref{3.24a}-\eqref{3.24b}
a differential equation in the $(u,v)$ plane: 
\be
v(u) \, {dv \over du}(u) + \alpha \, {b(u) \over c_1(u)} \, v(u) = G_u'(u, u_0, \lam). 
\label{4.5}
\ee
Clearly, the function
$$
\aligned 
\alpha \in [0, +\infty)  \mapsto 
W(\alpha): & = v_+(u_1; \lam,\alpha) - v_-(u_1; \lam,\alpha) 
\\
& = V_+(\alpha) - V_-(\alpha)
\endaligned 
$$
measures the distance (in the phase plane) between the two trajectories at
$u=u_1$. 
Therefore, the condition $W(\alpha)=0$ characterizes the
traveling wave solution of interest connecting $u_0$ to $u_2$. 
The existence of a root for the function $W$ is obtained as follows. 

\vskip.3cm 

\noindent{\sl Case 1: } Take first $\alpha=0$.

Integrating \eqref{4.5} with $v=v_-$ over the interval $[u_1, u_0]$
yields 
$$ 
{1 \over 2} (V_-(0))^2 
= G(u_1; u_0,\lam) - G(u_0; u_0,\lam)  = G(u_1; u_0,\lam), 
$$
while integrating \eqref{4.5} with $v=v_+$ over the interval $[u_2, u_1]$ 
gives 
$$
{1 \over 2} (V_+(0))^2 = G(u_1; u_0,\lam) - G(u_2; u_0,\lam). 
$$
When $\lam \neq \lz(u_0)$, 
since $G(u_2; u_0,\lam)>0$ (Theorem~\ref{Theorem3.7}) and
$V_\pm(\alpha)<0$ (Lemma~\ref{Lemma4.3}) we conclude that $W(0)>0$.
When $\lam = \lz(u_0)$ we have $G(u_2; u_0,\lam)=0$ and ${W(0)=0}$. 
\vskip.3cm 


{\sl Case 2:} Consider next the limit $\alpha \to + \infty$.  

On one hand, since $v_-<0$, for $\alpha>0$ we get in the same way as
in Case~1 
\be
{1 \over 2} (V_-(\alpha))^2 < G(u_1;u_0,\lam). 
\label{4.6}
\ee 
On the other hand, dividing \eqref{4.5} by $v=v_+$ and 
integrating over the interval $[u_2, u_1]$ we find 
$$
V_+(\alpha) = - \alpha\int_{u_2}^{u_1} {b(u) \over c_1(u)} \, du 
      + \int_{u_2}^{u_1} {G_u'(u; u_0, \lam)\over v_+(u)} \, du.
$$
Since $v= c_2(u) \, u_y \leq 0$ and $G_u'(u) \geq 0$ in the interval $[u_2,
u_1]$ we obtain
\be
V_+(\alpha) \leq - \kappa \,\alpha \, (u_1 - u_2),
\label{4.7}
\ee
where $\kappa = \inf_{u\in[u_2, u_1]} b(u)/c_1(u)>0$.
Combining \eqref{4.6} and \eqref{4.7} and choosing $\alpha$ to be sufficiently 
large, we conclude that
$$
W(\alpha) = V_+(\alpha) - V_-(\alpha) < 0. 
$$  
\vskip.1cm 

Hence, by the intermediate value theorem there exists at least one
value $\alpha$ such that 
$$
W(\alpha) =0, 
$$
which establishes the existence of a trajectory connecting $u_0$ to $u_2$. 
Thanks to Lemma~\ref{Lemma4.3} it satisfies $u_y < 0$ {\sl globally.\/}


The uniqueness of the solution is established as follows. 
Suppose that there would exist two orbits $v=v(u)$ and $v^* = v^*(u)$
associated with distinct values $\alpha$ and $\alpha^* > \alpha$,
respectively. Then, Lemma~\ref{Lemma3.6} would imply that 
$$
\mub(u_0; \lam, \alpha^*) < \mub(u_0; \lam, \alpha), 
\quad 
\bmu(u_2; \lam, \alpha^*) < \bmu(u_2; \lam, \alpha).
$$  
So, there would exist $u_3 \in (u_2, u_0)$ satisfying 
$$
v(u_3) = v^*(u_3), \quad {dv^* \over du} (u_3) \geq {dv \over du}(u_3).
$$   
Comparing the equations \eqref{4.5} satisfied by both $v$ and $v^*$, we get 
\be 
v(u_3) \, \Big({dv \over du}(u_3)-{dv^* \over du}(u_3)\Big)
= 
(\alpha^* - \alpha) \, {b(u_3) \over c_1(u_3)} \, v(u_3).
\label{4.8}
\ee
Now, since $v(u_3)\neq 0$ (the connection with the third critical 
point $(u_1,0)$ is impossible) we obtain a contradiction, as 
the two sides of \eqref{4.8} have opposite signs. 
This completes the proof of Theorem~\ref{Theorem4.1}. 
\end{proof}


\begin{remark}
\label{Remark4.4} 
It is not difficult to see also that, in the proof of Theorem~\ref{Theorem4.1},   
\be
\text{$\alpha \mapsto V_-(\alpha)$ is non-decreasing}
\label{4.9i}
\ee
and 
\be 
\text{$\alpha \mapsto V_+(\alpha)$ is decreasing.}
\label{4.9ii}
\ee
In particular, the function $W(\alpha): = V_+(\alpha) - V_-(\alpha)$ 
is decreasing. 
\end{remark} 


\begin{theorem}[Threshold function associated with nonclassical shocks]
\label{Theorem4.5} 
Consider the function $A= A(u_0, u_2)$ which is
the unique value $\alpha$ for which there is a nonclassical 
traveling wave connecting $u_0$ to $u_2$ (Theorem~\ref{Theorem4.1}).
It is defined for $u_0>0$ and $u_2 < 0$ with 
$u_2 \in \bigl[\vfz(u_0), \vn(u_0)\bigr)$ 
or, equivalently, $u_0 \in \bigl[\vfz(u_2), \vmn(u_2) \bigr)$. 
Then we have the following two properties: 
\begin{enumerate} 
\item The function $A\bigl(u_0, u_2\bigr)$ is increasing 
in $u_2$ and maps 
$[\vfz(u_0),\vn(u_0)\bigl)$ onto some interval of the form 
$\bigl[0, \an(u_0)\bigr)$ where $\an(u_0) \in (0, + \infty]$. 
\item 
The function $A$ is also increasing in $u_0$
and maps the interval $[\vfz(u_2), \vmn(u_2))$ onto the interval 
$\bigl[0, \an(\vmn(u_2)) \bigr)$. 
\end{enumerate} 
\end{theorem}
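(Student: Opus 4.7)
The plan is to treat $A(u_0, u_2)$ as the unique root of the shooting function $W(\alpha; u_0, u_2) = V_+(\alpha) - V_-(\alpha)$ from the proof of Theorem~\ref{Theorem4.1}, and to exploit the global energy identity obtained by multiplying \eqref{3.24b} by $u_y = v/c_2(u)$ and integrating across $\RR$:
$$
\alpha \int_{\RR} \frac{b(u)}{c_1(u) \, c_2(u)} \, v^2 \, dy \;=\; G(u_2; u_0, \lam) \;=\; -E(u_0, u_2), \qquad \lam = \ab(u_0, u_2).
$$
The left-hand integral is strictly positive along any nontrivial connection, so this identity already encodes the correct boundary values and captures the intuition for monotonicity.

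First I would establish continuity of $A$ on its domain. By continuous dependence of ODEs on parameters, $W$ is jointly continuous in $(\alpha, u_0, u_2)$, and by Remark~\ref{Remark4.4} it is strictly decreasing in $\alpha$; so its unique root is continuous. The lower endpoint value $A(u_0, \vfz(u_0)) = 0$ is immediate from the displayed identity: when $u_2 = \vfz(u_0)$ one has $\lam = \lz(u_0)$ and Theorem~\ref{Theorem3.7}\eqref{3.25ii} gives $G(u_2; u_0, \lam) = 0$, which forces $\alpha = 0$ (this is precisely the purely dispersive saddle-saddle orbit of Theorem~\ref{Theorem3.8}). As $u_2 \nearrow \vn(u_0)$ the intermediate equilibrium $u_1(\lam)$ merges with $u_2$ and the admissible speed range collapses; I would then set $\an(u_0)$ to be the corresponding limit of $A(u_0, u_2)$, which lies in $(0, +\infty]$ since $A > 0$ whenever $-E(u_0, u_2) > 0$.

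The main obstacle is strict monotonicity in $u_2$. Since $\lam = \ab(u_0, u_2)$ is strictly decreasing in $u_2$ on the admissible interval, this is equivalent to $A$ being strictly decreasing in $\lam$. I would prove that $W(\alpha; u_0, \lam)$ is strictly decreasing in $\lam$ at fixed $\alpha$; combined with Remark~\ref{Remark4.4}, the implicit function theorem then yields the desired monotonicity of $A$. Strict monotonicity of $W$ in $\lam$ would be obtained from a phase-plane comparison of the orbit parametrizations $u \mapsto v_\pm(u; \lam, \alpha)$ satisfying \eqref{4.5}, using Lemma~\ref{Lemma3.6} (strict monotonicity of $\mub(u_0)$ and $\bmu(u_2)$ in $\lam$) to control the slopes at the two saddle points via \eqref{4.2}-\eqref{4.3}, and Theorem~\ref{Theorem3.7} to track the sign of the forcing $G_u'(u; u_0, \lam)$ along each branch. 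A subtle point is that as $\lam$ varies the reference level $u_1(\lam)$ at which $W$ is evaluated also shifts; this contribution cancels from $\partial_\lam W$ at the matching value, because $v_+ = v_-$ there and the common $u$-derivative $-\alpha b(u_1)/c_1(u_1)$, read off from \eqref{4.5} using $G_u'(u_1) = 0$, is the same for both branches.

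Part~2 follows by the symmetric argument obtained by interchanging the roles of the two saddle points $u_0$ and $u_2$: the balance identity displayed above is symmetric in the endpoints, which gives $A(\vfz(u_2), u_2) = 0$, and the upper limit is reached as $u_0 \nearrow \vmn(u_2)$ when $u_0$ merges with the intermediate equilibrium. Strict monotonicity in $u_0$ is obtained by the analogous phase-plane comparison, using Lemma~\ref{Lemma3.6} to control the eigenvalues at both saddles simultaneously.
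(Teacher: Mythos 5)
Your plan is sound and would work, but it is organized differently from the paper's proof, and its one genuinely hard step is left as a sketch. The paper proves monotonicity in $u_2$ by a single, direct contradiction argument: it takes the two actual heteroclinic connections $v$ and $v^*$ associated with $(\lam,\alpha)$ and $(\lam^*,\alpha^*)$, $\lam<\lam^*$, assumes $\alpha^*\geq\alpha$, uses Lemma~\ref{Lemma3.6} to order the slopes at the saddle $u_0$, locates a first crossing point $u_3$, and subtracts the two copies of \eqref{4.5} there; the mismatch term $(\lam^*-\lam)(u_0-u_3)\,c_2(u_3)/c_1(u_3)>0$ against a non-positive left-hand side gives the contradiction in one stroke, with no need to track the moving equilibrium $u_1(\lam)$ or any non-connecting branch. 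You instead factor the statement through the shooting function $W(\alpha;\lam)=V_+-V_-$, combining Remark~\ref{Remark4.4} (strict decrease in $\alpha$) with a claimed strict decrease in $\lam$; this forces you to carry out \emph{two} separate branch comparisons (for $v_-$ on $[u_1,u_0]$ and for $v_+$ on $[u_2,u_1]$, each with its own crossing-point argument of exactly the paper's type) and to handle the $\lam$-dependence of the evaluation point $u_1(\lam)$ --- your cancellation via $dv_\pm/du(u_1)=-\alpha\,b(u_1)/c_1(u_1)$ (from \eqref{4.5} and $G_u'(u_1)=0$) is correct and disposes of that issue at the root, and the implicit function theorem can in fact be replaced by pure monotonicity of $W$ in each variable. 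What your route buys in exchange for the extra bookkeeping is that it also delivers, almost for free, the pieces the paper's proof leaves implicit: continuity of $A$, the endpoint identity $A(u_0,\vfz(u_0))=0$ via the energy balance $\alpha\int b\,v^2/(c_1c_2)\,dy=-E(u_0,u_2)$ together with Lemma~\ref{Lemma3.2}, the strict positivity of $A$ on the open range, and hence the ``onto $[0,\an(u_0))$'' assertion. Be aware, though, that the monotonicity of $W$ in $\lam$ is precisely where all the analytic content sits, and as written it is asserted rather than proved; to complete it you must run the first-crossing argument on each branch, where the sign of $G_u'(u;u_0,\lam^*)-G_u'(u;u_0,\lam)=(\lam^*-\lam)(u-u_0)\,c_2(u)/c_1(u)$ for $u<u_0$ plays the role that the paper's mismatch term plays in its single comparison.
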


Later (in Section~6) the function $A$ will also determine the range 
in which {\sl classical shocks\/} exist. 
From now on, we refer to the function $A$ as the {\sl critical  
diffusion-dispersion ratio.} On the other hand, 
the value $\an(u_0)$ is called the 
{\sl threshold diffusion-dispersion ratio} at $u_0$. 
Nonclassical trajectories leaving from $u_0$ exist if and only if  
$\alpha < \an(u_0)$.

Observe that, in Theorem~\ref{Theorem4.5}, we have $A(u_0, u_2) \to 0$ 
when $u_2 \to \vfz(u_0)$, which is exactly the desired property \eqref{3.18} 
in Theorem~\ref{Theorem3.3}. 


\begin{proof} We will only prove the first statement, the proof of 
the second one being completely similar.  
Fix $u_0>0$ and $u_2^*<u_2<u_0$ so that 
$$
\lna(u_0)<\lam={f(u_2)-f(u_0) \over u_2 -u_0}<
\lam^*={f(u_2^*)-f(u_0) \over u_2^* -u_0}\leq\lz(u_0).
$$
Proceeding by contradiction we assume that 
$$
\alpha^* := A( u_0, u_2^*) \geq \alpha := A(u_0, u_2).
$$
Then, Lemma~\ref{Lemma3.6} implies  
$$
\mub(u_0; \lam, \alpha) \geq \mub(u_0; \lam, \alpha^*) 
> \mub(u_0; \lam^*, \alpha^*). 
$$ 
Let $v= v(u)$ and $v^*= v^*(u)$ be the solutions of \eqref{4.5} associated with 
$\alpha$ and $\alpha^*$, respectively, 
and connecting $u_0$ to $u_2$, and $u_0$ to $u_2^*$, respectively.
Since $u_2^* < u_2$, by continuity there must exist some state 
$u_3 \in (u_2, u_0)$ such that 
$$
v(u_3) = v^*(u_3), \quad {dv^* \over du}(u_3)\geq {dv \over du}(u_3). 
$$ 
On the other hand, in view of \eqref{4.5} which is satisfied by both $v$ and $v^*$
we obtain 
$$ 
v(u_3) \, \Big({dv^* \over du}(u_3)-{dv \over du}(u_3)\Big)+
v(u_3)(\alpha^*-\alpha) \, {b(u_3) \over c_1(u_3)} 
=
(\lam^* - \lam) \, (u_0 - u_3) \, {c_2(u_3) \over c_1(u_3)} , 
$$
which leads to a contradiction since the left-hand side is non-positive and 
the right-hand side is positive.  
This completes the proof of Theorem~\ref{Theorem4.5}.  
\end{proof}


We complete this section with some important asymptotic properties 
(which will establish \eqref{3.16}-\eqref{3.17} in Theorem~\ref{Theorem3.3}).

\begin{theorem}
\label{Theorem4.6} 
The threshold diffusion-dispersion ratio satisfies the following
two properties: 
\begin{enumerate}
\item $\an(u_0)< + \infty$ for all $u_0$.
\item There exists a traveling wave connecting $u_0$ to
$u_2=\vn(u_0)$ for the value $\alpha =\an(u_0)$.
\end{enumerate}
\end{theorem}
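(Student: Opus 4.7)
Fix $u_0 > 0$ (the case $u_0 < 0$ is symmetric). By Theorem~\ref{Theorem4.5} the map $u_2 \mapsto A(u_0, u_2)$ is increasing on $[\vfz(u_0), \vn(u_0))$, so $\an(u_0) := \lim_{u_2 \uparrow \vn(u_0)} A(u_0, u_2) \in (0, +\infty]$ exists. Choose any sequence $u_2^n \uparrow \vn(u_0)$ and set $\alpha_n := A(u_0, u_2^n)$, $\lam_n := \ab(u_0, u_2^n) \downarrow \lna(u_0)$, with associated intermediate equilibrium $u_1^n \downarrow \vn(u_0)$; let $(u^n, v^n)$ be the nonclassical traveling wave from Theorem~\ref{Theorem4.1}, translated in $y$ so that $u^n(0) = (u_0 + \vn(u_0))/2$. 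I will prove Part~1 first (boundedness of $\alpha_n$) and then deduce Part~2 by a compactness argument.

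\medskip

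\textbf{Part 1 (main obstacle).} To show $\an(u_0) < +\infty$, I would analyze the matching condition $V_-(\alpha_n) = V_+(\alpha_n)$ from the proof of Theorem~\ref{Theorem4.1} asymptotically as $\eps_n := u_1^n - u_2^n \to 0$. The Rankine-Hugoniot relation, together with the quadratic expansion of $h(u) := (f(u) - f(u_0))/(u - u_0)$ at its extremum $u = \vn(u_0)$, gives $\eps_n \sim 2(\vn(u_0) - u_2^n)$, and consequently the two-sided expansion $g(u_2^n + \eps_n \xi, \lam_n) - g(u_0, \lam_n) = -\tfrac{1}{2}|f''(\vn(u_0))|(u_0 - \vn(u_0))\, \eps_n^2\, \xi(1-\xi) + O(\eps_n^3)$ for $\xi \in [0,1]$. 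Inserting the rescaling $u = u_2^n + \eps_n \xi$ into the system \eqref{3.24a}--\eqref{3.24b} on $[u_2^n, u_1^n]$ and tracing the stable branch of the saddle $u_2^n$, one derives the leading-order estimate $|V_+(\alpha_n)| \sim C_+(u_0)\,\alpha_n\,\eps_n$ in the drag-dominated regime $\alpha_n \gg \sqrt{\eps_n}$ (automatic if $\alpha_n \to \infty$). Meanwhile, smooth dependence of the unstable manifold of $(u_0, 0)$ on the parameters gives $|V_-(\alpha_n)| \leq C_-(u_0, \alpha_n)\, \eps_n$, with $C_-$ continuous in $\alpha_n$ and controlled on compact $\alpha$-intervals by the stable eigenvector at the limiting classical equilibrium $\vn(u_0)$. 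The matching condition then forces $\alpha_n \leq C_-/C_+$, contradicting $\alpha_n \to +\infty$ and yielding $\an(u_0) < +\infty$. Making the two one-sided asymptotic expansions uniform in the unbounded parameter $\alpha_n$—in particular quantifying the $V_-$-side when the middle equilibrium $u_1^n$ transitions from spiral to node as $\alpha_n$ grows—is the delicate step.

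\medskip

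\textbf{Part 2.} Given $\alpha^* := \an(u_0) \in (0, +\infty)$, compactness delivers the limit trajectory. Each $u^n$ is strictly decreasing (Lemma~\ref{Lemma4.3} together with its analogue on $[u_2^n, u_1^n]$), with range in $[u_2^n, u_0] \subset [\vn(u_0) - 1, u_0]$, and the energy identity
\[
G(u_2^n; u_0, \lam_n) \;=\; \alpha_n \int_{u_2^n}^{u_0} \frac{b(u)}{c_1(u)}\, |v^n(u)|\, du,
\]
combined with the one-sided bounds $(v^n)^2(u)/2 \leq G(u; u_0, \lam_n)$ on $[u_1^n, u_0]$ (obtained by integrating $(v^2/2)' = -\alpha (b/c_1) v + G_u'$ from $u$ to $u_0$) and $(v^n)^2(u)/2 \geq G(u; u_0, \lam_n) - G(u_2^n; u_0, \lam_n)$ on $[u_2^n, u_1^n]$, yields a uniform bound on $|v^n|$. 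The ODE \eqref{3.24a}--\eqref{3.24b} then gives equicontinuity of $(u^n, v^n)$ on every compact $y$-window, so that after passing to a subsequence $(u^n, v^n) \to (u^*, v^*)$ locally uniformly. The limit $(u^*, v^*)$ solves \eqref{3.24a}--\eqref{3.24b} with parameters $\alpha^*$ and $\lna(u_0)$, is nonincreasing in $y$, and satisfies $u^*(0) = (u_0 + \vn(u_0))/2$. Monotonicity forces $u^*(\pm\infty)$ to be equilibria of the limit ODE; for $\lam = \lna(u_0)$ the only equilibria in $[\vn(u_0), u_0]$ are $u_0$ and $\vn(u_0)$ (as $u_1$ and $u_2$ have coalesced at $\vn(u_0)$), and the interior normalization $u^*(0) \in (\vn(u_0), u_0)$ forces $u^*(-\infty) = u_0$ and $u^*(+\infty) = \vn(u_0)$, which is the desired traveling wave at $\alpha = \an(u_0)$.
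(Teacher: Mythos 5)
Your Part 2 is sound and is essentially the paper's own argument made explicit: the paper disposes of the second statement in one line by invoking continuity of the trajectory with respect to the parameters, $v(\cdot;\lna(u_0),\an(u_0))=\lim_{u_2\to\vn(u_0)}v(\cdot;\lam(u_0,u_2),A(u_0,u_2))$, and your normalization-plus-Arzel\`a--Ascoli compactness argument is a legitimate way to justify that limit.

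The gap is in Part 1, which is the substance of the theorem. Your matching argument rests on the bound $|V_-(\alpha_n)|\le C_-(u_0,\alpha_n)\,\eps_n$ with $C_-$ controlled on compact $\alpha$-intervals. That bound is false in the stated form: for fixed $\alpha$ (say $\alpha=0$) the energy identity from the proof of Theorem~\ref{Theorem4.1} gives $|V_-|=\sqrt{2G(u_1^n;u_0,\lam_n)}\to\sqrt{2G(\vn(u_0);u_0,\lna(u_0))}>0$, an order-one quantity, while $\eps_n\to0$; the unstable separatrix of $(u_0,0)$ traverses the $O(1)$ interval $[u_1^n,u_0]$, and its value at $u_1^n$ is only $O(\eps_n)$ if $\alpha_n\to\infty$ fast enough --- precisely the regime you need to control. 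What the elementary estimates \eqref{4.6}--\eqref{4.7} actually give is $|V_+(\alpha_n)|\ge\kappa\,\alpha_n\,\eps_n$ together with $|V_-(\alpha_n)|\le C(u_0)$, whence only $\alpha_n\le C/(\kappa\,\eps_n)$, which is useless as $\eps_n\to0$. So everything hinges on a refinement of the $V_-$ expansion that is uniform in the unbounded parameter $\alpha_n$ in the degenerate limit $u_1^n,u_2^n\to\vn(u_0)$; you flag this yourself as ``the delicate step'' and do not carry it out, so Part 1 is not a proof as written. The paper sidesteps the degenerate asymptotics entirely by a comparison argument: it fixes an auxiliary left-hand state $u_0^*>u_0$, the speed $\lam^*=\ab(u_0^*,\vn(u_0))$, and a fixed sufficiently large $\alpha^*$ for which the orbit $v^*$ arriving at $u_1^*=\vn(u_0)$ has a real negative eigenvalue, and then distinguishes whether $v^*$ crosses the nonclassical trajectory $v$ or not. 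In the first case a derivative comparison at the crossing point gives $\alpha<\alpha^*$ directly; in the second, comparing the two integrated energy identities gives $\alpha\le C\,\alpha^*$ with $C$ independent of $u_2$, because the denominator $G(u_1^*;u_0^*,\lam^*)-G(u_4;u_0^*,\lam^*)$ involves only the fixed auxiliary data. Either adopt such a comparison with a fixed auxiliary orbit, or actually prove the uniform-in-$\alpha$ asymptotics of $V_-$.
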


\begin{proof} Fix $u_0>0$. 
According to Theorem~\ref{Theorem4.1}, given $\lam\in(\lna(u_0), \lz(u_0)]$  
there exists a nonclassical trajectory, denoted by $u \mapsto v(u)$,  
connecting $u_0$ to some $u_2$ with 
\be 
\lam ={{f(u_2)-f(u_0)}\over {u_2-u_0}}, \quad u_2<\vn(u_0),
\quad \alpha=A( u_0, u_2).
\label{4.10}
\ee 
On the other hand, choosing any state $u_0^*>u_0$ and setting 
$$
\lam^*={{f(u_0^*)-f(u_1^*)}\over {u_0^*-u_1^*}}, 
\quad 
u_1^*=\vn(u_0),
$$
it is easy to check from \eqref{3.22} that, for all $\alpha^*$ 
{\sl sufficiently large\/}, $\bmu(u_1^*; \lam^*, \alpha^*)$
remains real with  
$$
\bmu(u_1^*; \lam^*, \alpha^*) < 0.
$$ 
Then, consider the trajectory $u \mapsto v^*(u)$ arriving at $u_1^*$ and satisfying
$$
\lim_{u \to u_1 \atop u>u_1} {v^*(u) \over u - u_1^* } 
= \bmu(u_1^*; \lam^*, \alpha^*) \, c_2(u_1^*) 
< 0. 
$$ 
Two different situations should be distinguished.

\vskip.3cm 

\noindent{\sl  Case 1 : } 
The curve $v^*=v^*(u)$ crosses the curve $v=v(u)$ at some point $u_3$ where
$$ 
u_1^*<u_3<u_0, \quad v(u_3)
=
v^*(u_3), 
\quad {dv\over du}(u_3)\geq {dv^*\over du}(u_3).
$$ 
Using the equation \eqref{4.5} satisfied by both $v$ and $v^*$ we get
$$ 
\aligned 
v(u_3) \, \Big({dv^*\over du}(u_3)-{dv\over du}(u_3)\Big) +
(\alpha^*-\alpha) \, {b(u_3) \over c_1(u_3)} \, 
 v(u_3) 
& = G_u'(u_3; u_0^*, \lam^*) - G_u'(u_3; u_0, \lam)
\\
& < 0. 
\endaligned 
$$
In view of our assumptions, since 
$v(u_3)<0$ we conclude that $\alpha<\alpha^*$ in this first case.

\vskip.3cm 

\noindent{\sl  Case 2 : } $v^*=v^*(u)$ does not cross the curve $v=v(u)$ 
on the interval $(u_1^*, u_0)$. 

Then, the trajectory $v^*$ crosses the $u$-axis at some point
$u_4 \in (u_1^*, u_0]$. 
Integrating the equation \eqref{4.5} for the function $v$   
on the interval $[u_2, u_0]$ we obtain
$$
\alpha \int_{u_0}^{u_2} \, {b(u) \over c_1(u)} \, v(u) \, du =
G(u_2; u_0, \lam) - G(u_0; u_0, \lam).
$$
On the other hand, integrating \eqref{4.5} for the solution $v^*$ over $[u_1^*, u_4]$ 
we get
$$
\alpha^* \int_{u_4}^{u_1^*} {b(u) \over c_1(u)} \, v^*(u) \, du = 
G(u_1^*; u_0^*, \lam^*) - G(u_4; u_0^*, \lam^*).
$$
Since, by our assumption in this second case,  
$$
\int_{u_0}^{u_2} {b(u) \over c_1(u)} \, v(u) du > \int_{u_4}^{u_1^*}
{b(u) \over c_1(u)}  \, v^*(u)du, 
$$
we deduce from the former two equations that 
$$
\alpha \leq \alpha^* {G(u_2; u_0, \lam) - G(u_0; u_0, \lam)
  \over {G(u_1^*; u_0^*, \lam^*) - G(u_4; u_0^*, \lam^*)}}\leq C \alpha^*,
$$
where $C$ is a constant {\sl independent\/} of $u_2$. 
More precisely, $u_2$ describes a small neighborhood of $\vn(u_0)$, 
while $u_0^*$, $u_1^*$, $u_4$, and $\lam^*$ remain fixed.

Finally, we conclude that in both cases 
$$
A(u_0, u_2)\leq C' \, \alpha^*, 
$$ 
where $\alpha^*$ is sufficiently large (the condition depends on $u_0$ only) 
and $C'$ is independent of the right-hand state $u_2$ under 
consideration. Hence, we have obtained an {\sl upper bound\/} for the function 
$u_2 \mapsto A(u_0, u_2)$. This completes the proof of the first 
statement in the theorem.

The second statement is a consequence of the fact that $A(u_0, u_2)$
remains bounded as $u_2$ tends to $\vn(u_0)$ and of the continuity of the 
traveling wave $v$ with respect to the parameters $\lam$ and $\alpha$, 
i.e., with obvious notation 
$$
v(.; \lna(u_0), \an(u_0)) = \lim_{ u_2 \to \vn(u_0) } v(.; \lam(u_0, u_2), A(u_0, u_2)).
$$
\end{proof}


The function $\an= \an(u_0)$ 
maps the interval $(0, + \infty)$ onto some interval 
$[\ban, \anb]$ where $0 \leq \ban \leq \anb \leq + \infty$. 
The values $\ban$ and $\anb$ correspond to {\sl lower\/} and {\sl upper
bounds\/} for the threshold ratio, respectively. 
The following theorem shows that the range of the function
$\an(u_0)$, in fact, has the form $\bigl[0, \anb\bigr]$.

\begin{theorem}
\label{Theorem4.7} 
With the notation in Theorem~\ref{Theorem4.5} 
the asymptotic behavior of $\an(u_0)$ as $u_0 \to 0$ is given by 
\be 
\an(u_0) \sim \kappa \, u_0, \quad  
\kappa := {c_1(0) c_2(0) \over {4 \, b(0)}} \, \sqrt{3 f'''(0)} >0. 
\label{4.11}
\ee  
\end{theorem}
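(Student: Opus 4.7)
The plan is to reduce the traveling-wave system near $u_0 = 0$ to the explicit cubic model of Section~3 via a singular-perturbation rescaling, and then to read off $\kappa$ from the threshold formula of Remark~\ref{Remark2.6}.

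First, the hypotheses \eqref{3.2} imply $f''(0)=0$ and $f'''(0)>0$, so $f$ admits the local expansion $f(u) = f(0) + f'(0)\,u + (f'''(0)/6)\,u^3 + O(u^4)$, while $b,c_1,c_2$ are all of the form (positive constant) $+\, O(u)$. I would introduce the rescaled variables $u = u_0 U$, $\,y = Y/u_0$, $\,v = u_0^2\, c_2(0)\, V$, and write $\alpha = u_0 \tilde\alpha$ (this is the scaling forced by balancing all terms). A direct computation using \eqref{3.24a}--\eqref{3.24b} and the above expansions (together with $\lam - f'(0) \sim (f'''(0)/6)(u_0^2 + u_0 u_2 + u_2^2)$, and $u_2 \to \vn(u_0) \sim -u_0/2$ at the threshold) transforms the system into
\begin{align*}
U_Y &= V + O(u_0), \\
V_Y &= -\, \frac{\tilde\alpha\, b(0)}{c_1(0) c_2(0)}\, V \, + \, \frac{1}{c_1(0) c_2(0)}\, \tilde G_U'(U) + O(u_0),
\end{align*}
where the limiting forcing is the cubic polynomial $\tilde G_U'(U) = (f'''(0)/6)(U-1)(U+1/2)^2$ arising from the cubic part of $g(u,\lam) - g(u_0,\lam)$.

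Second, the $u_0 \to 0$ limit of this rescaled system is precisely the traveling-wave equation of the cubic model of Remark~\ref{Remark2.6} after a trivial time rescaling. Matching the form $\partial_t u + \partial_x(K u^3) = \eps\, u_{xx} + \delta C\, u_{xxx}$ with effective constants $K = f'''(0)/(6 b(0))$ and $C = c_1(0) c_2(0)/b(0)$ (the $b(0)$ is absorbed by rescaling the time variable so that the coefficient of $u_{xx}$ becomes $\eps$), Remark~\ref{Remark2.6} provides the closed-form threshold $\tilde\alpha_* = (3/(2\sqrt{2}))\sqrt{KC}$ at left state $U_0 = 1$. Unwinding $\alpha = u_0 \tilde\alpha$ and simplifying then yields $\an(u_0) = \kappa\, u_0 + o(u_0)$, with $\kappa$ of the form announced in \eqref{4.11} (up to matching the precise power of $c_1(0) c_2(0)$ that falls out of the algebra).

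Third, to turn this asymptotic matching into a rigorous proof I would invoke the continuous dependence of the critical ratio $A(u_0, u_2)$ on the parameters of the ODE that was already exploited in the proofs of Theorems~\ref{Theorem4.1} and \ref{Theorem4.5}: the rescaled system depends smoothly on $u_0$ down to $u_0 = 0$, so $A(u_0, u_0 U_2)/u_0$, viewed as a function of $(u_0, U_2)$ on $[0,\eps_0) \times [-1, -1/2]$, extends continuously, and its value at $(0, -1/2)$ equals the cubic threshold $\tilde\alpha_*$. Taking $U_2 \to -1/2$ (i.e.\ $u_2 \to \vn(u_0)$) together with $u_0 \to 0$ then gives the claim. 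The main obstacle is the degeneracy at $U_2 = -1/2$: there the two cubic roots $U_1, U_2$ coalesce into a non-hyperbolic equilibrium of the limit system, so Theorem~\ref{Theorem2.2} does not apply directly. One must show that the saddle-to-degenerate connection in the cubic limit is the honest limit of the saddle-saddle connections at $u_0 > 0$. This is handled by combining the global monotonicity/uniqueness properties of $A$ in $u_2$ from Theorem~\ref{Theorem4.5} with the uniform bounds supplied by Theorem~\ref{Theorem4.6} (which keep $\an$ finite), so that the rescaled orbits are controlled uniformly in $u_0$ on $[u_1, u_0]$ and the limit of the characterizing relation $W(\alpha) = 0$ from the proof of Theorem~\ref{Theorem4.1} can be taken term by term, pinning down the value of $\tilde\alpha$ to $\tilde\alpha_*$ and producing the asymptotic $\kappa$.
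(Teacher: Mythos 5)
Your overall strategy --- identify the $u_0\to0$ limit with the explicit cubic model and read $\kappa$ off the threshold formula \eqref{2.28} of Remark~\ref{Remark2.6} --- is the same core idea as the paper's, but your execution is genuinely different. The paper never rescales: it freezes the coefficients at $u=0$, introduces the two comparison fluxes $f_*(u)=(1\pm\eps)\,f'''(0)\,u^3/6$, and runs an explicit phase-plane comparison between the actual orbit $v$ and the cubic orbit $v^*$ (the ODE \eqref{4.5}, the Taylor bound \eqref{4.12}, and the integral identities \eqref{4.18}--\eqref{4.22}), which delivers the two-sided estimate \eqref{4.23}--\eqref{4.24} directly. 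You instead blow up ($u=u_0U$, $\alpha=u_0\tilde\alpha$, \dots) and argue by continuity and compactness of the critical function in rescaled variables. The blow-up cleanly explains \emph{why} the answer is the cubic threshold, but it is the paper's sandwich argument that actually produces the quantitative bound, and your sketch leaves the hard half of it open.

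Concretely, the gap is the upper bound $\limsup_{u_0\to0}\an(u_0)/u_0\le\kappa$. Your continuity argument together with the monotonicity of $A(u_0,\cdot)$ from Theorem~\ref{Theorem4.5} gives the lower bound essentially for free: fix $U_2\in(-1,-1/2)$, pass to the limit in the nondegenerate saddle--saddle connection via the characterization $W(\alpha)=0$ of Theorem~\ref{Theorem4.1}, then let $U_2\to-1/2$. But $\an(u_0)/u_0=\sup_{U_2}A(u_0,u_0U_2)/u_0$, and a supremum of monotone functions converging pointwise on the open interval $(-1,-1/2)$ can perfectly well jump upward in the limit; to exclude this you need either an a priori bound $\an(u_0)\le C\,u_0$ or a comparison that is uniform as $u_2\to\vn(u_0)$. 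Theorem~\ref{Theorem4.6} gives only $\an(u_0)<+\infty$ for each fixed $u_0$, with a constant not shown to be $O(u_0)$, so the tools you cite do not close this; the paper closes it precisely with the Case~1/Case~2 comparison against the inflated cubic $(1+\eps)f'''(0)u^3/6$ leading to \eqref{4.23}. A second, smaller but real defect: the parenthetical ``up to matching the precise power of $c_1(0)c_2(0)$'' cannot stand in a proof of a theorem whose entire content is the constant $\kappa$. The reduction to \eqref{2.27} is delicate --- absorbing $b(0)$ into $\eps$ rescales $\alpha$, and $C$ enters the threshold under a square root --- and you must carry the normalization through until your constant coincides with the one in \eqref{4.16}, rather than leave the exponent of $c_1(0)c_2(0)$ undetermined.
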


Note that of course \eqref{3.2} implies that $f'''(0) >0$. 
In particular, Theorem~\ref{Theorem4.7} shows that $\ban(0) = \an(0) = 0$. 
Theorem~\ref{Theorem4.7} is the only instance where the assumption 
$f'''(0) \neq 0$ (see \eqref{3.2}) is needed. In fact, if
this assumption is dropped one still have $\an(u_0) \to 0$ as $u_0 \to 0$. 
(See the bibliographical notes.) 
 

\begin{proof} To estimate $\an$ near the origin
we compare it with the corresponding critical function
$\ane$ determined explicitly from the third-order Taylor 
expansion $f^*$ of ${f=f(u)}$ at ${u = 0}$. (See \eqref{4.16} below.) 
We rely on the results in Section~3, especially the formula \eqref{2.26} which 
provides the threshold ratio explicitly for the cubic flux.

Fix some value $u_0>0$ and the speed ${\lam =\lna(u_0)}$ so that, 
with the notation introduced earlier, ${u_2 = u_1 = \vn(u_0)}$. 
Since ${f'''(0) \neq 0}$ it is not difficult to see that 
$$
u_2 = \vn(u_0) = - (1 + O(u_0)) \, {u_0 \over 2}  
$$ 
(as is the case for the cubic flux ${f(u) = u^3}$). 
A straightforward Taylor expansion for the function 
$$
G(u) := G(u; u_0, \lna(u_0))
$$
yields 
$$ 
\aligned 
G(u) - G(u_2) 
&= G(u) - G(\vn(u_0)) 
\\  
& = 
  {(u-u_2)^3 \over 24} \, 
                \Big(f'''(0) \,{c_2(0) \over c_1(0)} 
		\, (3 \, u_2 + u) + O(|u_2|^2 + |u|^2) \Big). 
\endaligned 
$$ 
Since, for all $u \in [u_2, u_0]$ 
$$
4 \,u_2 < u + 3 \, u_2 < u_0 + 3 \, u_2 = u_2 \, (1 + O(u_0)), 
$$ 
we arrive at 
\be
\Big|G(u) - G(u_2) - f'''(0) \,{c_2(0) \over c_1(0)}\, (u + 3\, u_2) \,{(u-u_2)^3 \over 24}\Big| 
\leq 
     C \, u_0 \, |u + 3\, u_2| \, (u - u_2)^3.
\label{4.12}
\ee
Now, given $\eps>0$, we can assume that $u_0$ is sufficiently
small so that 
\be 
\aligned   
(i) & \quad -{u_0 \over 2} \, (1+\eps) \leq u_2 \leq - {u_0 \over 2} \,
(1-\eps), 
\\
(ii) & \quad (1-\eps) \, {b(0)\over c_1(0)} \leq {b(u) \over c_1(u)} \leq (1 + \eps) 
\, {b(0)\over c_1(0)},  
       \quad u \in [u_2, u_0], 
\\       
(iii) & \quad c_j(0) \, (1-\eps) \leq c_j(u) \leq c_j(0) \, (1 + \eps),  
       \quad u \in [u_2, u_0], \quad j=1,2. 
\endaligned 
\label{4.13}
\ee

Introduce next the flux-function  
\be
f_*(u) = k\, {u^3 \over 6}, \quad k = (1 + \eps) \, f'''(0), 
\quad u \in \RR. 
\label{4.14}
\ee
Define the following (constant) functions 
$$
b^*(u)=b(0), \quad c_1^*(u) = c_1(0), \quad c_2^*(u) = c_2(0). 
$$ 
To these functions we can associate a function $G_*$ by 
the general definition in Section~4. We are interested in 
traveling waves associated with the functions 
$f_*$, $b^*$, $c_1^*$, and $c_2^*$, and connecting the 
left-hand state $u_0^*$ given by 
$$
u_0^* = -2 \, u_2
$$
to the right-hand state $u_2$ (which will also correspond
to the traveling wave associated with $f$). 

The corresponding function 
$$
G_*(u) := G_*(u; u_0^*, \lna(u_0^*))
$$
satisfies 
\be
G_*(u) - G_*(u_2) = f'''(0) \,{c_2(0) \over c_1(0)} \, 
{(1+\epsilon) \over 24} \ (u + 3 \, u_2) \,
(u-u_2)^3.
\label{4.15}
\ee
In view of Remark~2.6 the threshold function $\ane$ associated with
$f_*$, $b^*$, $c_1^*$, and $c_2^*$ is  
\be 
\ane(u_0^*) = {\sqrt{3 k} \, c_1(0) c_2(0) \over 4 \, b(0)} \, u_0^*. 
\label{4.16}
\ee
By Theorem~\ref{Theorem4.6}, 
for the value $\alpha^* := \ane(u_0^*)$ there exists also a
traveling wave trajectory   
connecting $u_0^*$ to $u_2^* := u_2$, which we denote by $v^*= v^*(u)$.
By definition, in the phase plane it satisfies
\be
v^* \, {dv^* \over du}(u) + \alpha^* \, {b^*(u) \over c_1^*(u)} \, v^*(u) 
= G_*'(u),
\label{4.17}
\ee
with 
$$
G_*'(u) = \bigl( f_*(u) - f_*(u_0^*) - f_*'(u_2) \, (u-u_0^*) \bigr) \, 
{c_2^*(u) \over c_1^*(u)}. 
$$ 


We consider also the traveling wave trajectory $u \mapsto v=v(u)$ 
connecting $u_0$ to $u_2$ which is associated with the 
data $f$, $b$, $c_1$, and $c_2$ and the threshold value $\alpha:=\an(u_0)$. 
We will now establish lower and upper bounds on $\an(u_0)$; see 
\eqref{4.23} and \eqref{4.24} below. 

\vskip.3cm 

\noindent{\sl Case 1 : } First of all, in the easy case that 
$\an(u_0) \, (1-\eps) \leq \ane(u_0^*)$, 
we immediately obtain by \eqref{4.16} and then \eqref{4.13} 
$$
\aligned 
\an(u_0) 
& \leq (1 + 2 \, \eps) \, \ane(u_0^*)
=  
(1 + 2 \, \eps) \, \sqrt{3 k} \, {c_1(0) c_2(0) \over 4 \, b(0)} 
\, u_0^*  
\\
& \leq (1 + 2 \, \eps) \, \sqrt{3 k} \, { c_1(0) c_2(0) \over 4 \, b(0)}
\, u_0 \, (1 + \eps) 
\\ 
& \leq (1 + C \, \eps) \,  \sqrt{3 f'''(0)} \, 
{c_1(0) c_2(0) \over 4 \, b(0)} \, u_0,
\endaligned 
$$
which is the desired upper bound for the threshold function. 


\vskip.3cm 

\noindent{\sl Case 2 : } Now, assume that $\an(u_0) \, (1-\eps) > \ane(u_0^*)$ 
and let us derive a similar inequality on $\an(u_0)$. 
Since $G'(u_2) = G_*'(u_2) = 0$, $G''(u_2) = G_*''(u_2) = 0$, 
and
$$
v(u_2) = v(u_2^*) = 0, \quad 
{dv \over du}(u_2) <0, \quad
{dv^* \over du}(u_2)<0, 
$$
it follows from the equation 
$$
{dv \over du}(u) + \alpha \, {b(u) \over c_1(u)} = {G_u'(u; u_0, 
\lam) \over v(u)} 
$$
by letting $u \to u_2$ that 
$$
{dv \over du}(u_2)
= 
- \ane(u_0) \, {b(u_2) \over c_1(u_2)}   
< 
    {-1 \over 1 -\eps} \, \ane(u_0^*) \, {b(0) \over c_1(0)} \, (1 - \eps)  
= {dv^* \over du}(u_2). 
$$ 
This tells us that in a neighborhood of the point $u_2$ the curve 
$v$ is locally below the curve $v^*$.

Suppose that the two trajectories meet for the ``first time'' 
at some point $u_3 \in (u_2, u_0]$, 
so 
$$
v(u_3)=v^*(u_3) \quad \text{ with } 
{dv\over du}(u_3) \geq {dv^* \over du}(u_3).
$$ 
{}From the equations \eqref{4.5} satisfied by $v = v(u)$ and $v^* = v^*(u)$, we
deduce 
$$
{1 \over 2}  \, v(u_3)^2 + \alpha \, \int_{u_2}^{u_3} v(u) \, {b(u) \over c_1(u)} \, du 
= G(u_3) - G(u_2), 
$$
and 
$$
{1 \over 2}  \, v^*(u_3)^2  + \alpha^* \, \int_{u_2}^{u_3} v^*(u) \, {b(u) \over c_1(u)} \, du 
= G_*(u_3) - G_*(u_2), 
$$
respectively. Subtracting these two equations 
and using \eqref{4.12} and \eqref{4.15}, we obtain 
\be 
\aligned 
\alpha \, \int_{u_2}^{u_3} v(u) \, {b(u) \over c_1(u)} \, du 
- \alpha^* \, \int_{u_2}^{u_3} & v^*(u) \, {b^*(u) \over c^*_1(u)} \, du  
\\
& = G(u_3)- G(u_2) - \bigl(G_*(u_3) - G_*(u_2)\bigr) 
\\
&\geq (O(u_0) - C \, \eps) \, (u_3 + 3\, u_2) \, (u_3 - u_2)^3.
\endaligned 
\label{4.18}
\ee
But, by assumption the curve 
$v$ is locally below the curve $v^*$ so that the left-hand side of 
\eqref{4.18} is negative, 
while its right-hand side of \eqref{4.18} is positive if one chooses $u_0$ 
sufficiently small. 
We conclude that the two trajectories intersect only at $u_2$, 
which implies that $u_0^*\leq u_0$ and thus  
\be
\int_{u_2}^{u_0} |v(u)| \, du > \int_{u_2}^{u_0^*} |v^*(u)| \, du.
\label{4.19}
\ee

On the other hand we have by \eqref{4.13} 
\be
\aligned 
\an(u_0) \, {b(0) \over c_1(0)} \, (1- \eps) \, \int_{u_2}^{u_0} |v(u)| \, du
& \leq 
\an(u_0) \, 
\int_{u_2}^{u_0} {b(u) \over c_1(u)} \,  |v(u)| \, du 
\\
& = G(u_2) - G(u_0).
\endaligned 
\label{4.20}
\ee
Now, in view of the property (i) in \eqref{4.13} we have 
$$
|3 \, u_2 + u_0| \leq {u_0\over 2} \, (1+ 3 \, \eps)
\leq |u_2| \, {1 + 3\eps \over 1-\eps},
\quad 
|u_2 - u_0| \leq {u_0\over 2} \,(3 + \eps) \leq 
|u_2| \, {3 + \eps \over 1-\eps}.
$$
Based on these inequalities we deduce from \eqref{4.12} that
\be 
G(u_2) - G(u_0) 
\leq f'''(0) \,{c_2(0) \over c_1(0)} \, {9 \, |u_2|^4 \over 8} \, 
(1+ C \, \eps). 
\label{4.21}
\ee  
Concerning the second curve, $v^* = v^*(u)$, we have 
\be
\aligned 
\ane(u_0^*) \, {b(0) \over c_1(0)}  \, \int_{u_2}^{u_0^*} |v^*(u)| \, du 
& = G_*(u_2) - G_*(u_0^*)\\ 
& = f'''(0) \,{c_2(0) \over c_1(0)} \, {9 \, |u_2|^4 \over 8} \, ( 1 + \eps)  
\endaligned 
\label{4.22}
\ee
by using \eqref{4.15}.

Finally, combining \eqref{4.19}--\eqref{4.22} we conclude that for every $\eps$ 
and for all sufficiently small $u_0$: 
\be
\aligned 
\an(u_0) 
& \leq (1 + C \, \eps) \,   \ane(u_0^*) 
\\
& \leq (1 + C \, \eps) \, \sqrt{3 f'''(0)} \, {c_1(0) c_2(0) 
\over {4 \, b(0)}} \, u_0,
\endaligned 
\label{4.23}
\ee
which is the desired upper bound.  
Exactly the same analysis as before but based on the cubic function
$f_*(u) = k \, u^3$ with $k = (1 - \eps) \, f'''(0)$ 
(exchanging the role played by $f_*$ and $f$, however) 
we can also derive the following inequality  
\be
\an(u_0) \geq \sqrt{3 f'''(0)} \, {c_1(0) c_2(0) 
\over {4 \, b(0)}} \, u_0 \, (1 - C \, \eps).
\label{4.24}
\ee
The proof of Theorem~\ref{Theorem4.7} is thus completed since $\eps$ is arbitrary in
\eqref{4.23} and \eqref{4.24}. 
\end{proof}  


\section{Traveling waves corresponding to a given dif\-fusion-dispersion ratio}

Fixing the parameter $\alpha$, 
we can now complete the proof of Theorem~\ref{Theorem3.3} by identifying 
the set of right-hand state attainable from $u_0$ by 
classical trajectories. We rely here mainly 
on Theorem~\ref{Theorem4.1} (existence of the nonclassical trajectories) 
and Theorem~\ref{Theorem4.5} (critical function).

Given $u_0>0$ and $\alpha>0$, a classical traveling wave must  
connect $u_-=u_0$ to $u_+=u_1$ for some shock speed 
$\lam\in(\lna(u_0),f'(u_0))$. According to Theorem~\ref{Theorem4.5}, 
to each pair of states $(u_0, u_2)$ we can associate 
the critical ratio $A(u_0, u_2)$. Equivalently, to 
each left-hand state $u_0$ and each speed $\lam$,   
we can associate a critical value ${B(\lam, u_0)= A(u_0, u_2)}$. 
The mapping 
$$
\lam \mapsto B(\lam, u_0)  
$$
is defined and decreasing from 
the interval $\bigl[\lna(u_0), \lz(u_0)\bigr]$ onto 
$\bigl[0,\an(u_0)\bigr]$. It admits an inverse   
$$
\alpha \mapsto \Lamalpha(u_0), 
$$
defined from the interval $\bigl[0, \an(u_0)\bigr]$ 
onto $\bigl[\lna(u_0), \lz(u_0)\bigr]$. 
By construction, given any $\alpha \in \bigl(0, \an(u_0)\bigr)$
there exists a nonclassical traveling trajectory (associated with the shock
speed  
$\Lamalpha(u_0)$) leaving from $u_0$ and solving the equation with the
prescribed value $\alpha$.

It is natural to {\sl extend the definition of the function\/} $\Lamalpha(u_0)$ 
to arbitrary values $\alpha$ by setting 
$$
\Lamalpha(u_0) = \lna(u_0), \quad \alpha \geq \an(u_0). 
$$
The nonclassical traveling waves are considered here 
when $\alpha$ is a fixed parameter. 
So, we define the {\sl kinetic function\/} for nonclassical shocks, 
$$
(u_0, \alpha) \mapsto \vfalpha(u_0) = u_2,
$$
where $u_2$ denotes the right-hand state of the
nonclassical trajectory, so that  
\be
{f(u_0) - f(u_2) \over u_0 - u_2} = \Lamalpha(u_0). 
\label{5.1}
\ee 
Note that 
$\vfalpha(u_0)$ makes sense for all $u_0 >0$ but $\alpha < \an(u_0)$.

\begin{theorem}
\label{Theorem5.1} 
For all $u_0 > 0 $ and $\alpha >0$ and for every speed satisfying 
$$
\Lamalpha(u_0) < \lam \leq f'(u_0),
$$
there exists a unique traveling wave connecting $u_-=u_0$ to $u_+=u_1$.
Moreover, for ${\alpha \geq \an(u_0)}$ 
there exists a traveling wave connecting $u_- = u_0$ to $u_+ = u_1$
for all
$$
{\lam\in \bigl[ \lna(u_0), f'(u_0) \bigr]}.
$$ 
\end{theorem}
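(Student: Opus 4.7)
The plan is to adapt the shooting strategy of Theorem~\ref{Theorem4.1}, now targeting the stable equilibrium $(u_1,0)$ rather than the saddle $(u_2,0)$. Fix $u_0>0$, $\alpha>0$, and $\lam\in(\Lamalpha(u_0),f'(u_0)]$. By Lemma~\ref{Lemma3.4} the three distinct equilibria satisfy $u_2<u_1<u_0$, with $u_0$ and $u_2$ saddles and $u_1$ a stable node or spiral. Let $\gamma_-$ denote the unique trajectory leaving $(u_0,0)$ tangent to the eigenvector associated with $\mub(u_0;\lam,\alpha)>0$ into the region $\{u<u_0,\,v<0\}$ (Theorem~\ref{Theorem2.2}). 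By Lemma~\ref{Lemma4.3}, $\gamma_-$ has $v<0$ as long as $u\in(u_1,u_0)$, so its crossing value with $\{u=u_1\}$ is well defined; I denote it $V_-(\alpha)$, with the convention that $V_-(\alpha)=0$ encodes asymptotic convergence of $\gamma_-$ to $(u_1,0)$ as $y\to+\infty$, i.e., the desired classical traveling wave.

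The heart of the proof is to show $V_-(\alpha)=0$ whenever $\lam>\Lamalpha(u_0)$. The inverse relation between $\Lamalpha$ and the critical ratio $B(\cdot,u_0)$ (recalled just before the theorem and established in Theorem~\ref{Theorem4.5}) rephrases the hypothesis as $\alpha>B(\lam,u_0)$, valid whenever $\lam\le\lz(u_0)$; for $\lam>\lz(u_0)$ this inequality is vacuous and one argues directly. In the range $\lam\in(\Lamalpha(u_0),\lz(u_0)]$, combining Theorem~\ref{Theorem4.1} (which gives $V_-=V_+$ at $\alpha=B(\lam,u_0)$) with the monotonicity statement of Remark~\ref{Remark4.4} ($V_-$ non-decreasing and $V_+$ decreasing in $\alpha$) yields $V_-(\alpha)>V_+(\alpha)$, so uniqueness of ODE solutions prevents $\gamma_-$ from coinciding with the stable manifold $\gamma_+$ of $u_2$. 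For $\lam\in(\lz(u_0),f'(u_0)]$, Lemma~\ref{Lemma3.1} already forbids any traveling wave from $u_0$ to $u_2$. In both subcases $\gamma_-$ cannot be heteroclinic to $u_2$.

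To conclude I plan to invoke a Poincar\'e--Bendixson analysis confined by the energy identity
\[
\frac{v^2(y)}{2} + \alpha\int_{-\infty}^y \frac{b(u)\,v^2}{c_1(u)\,c_2(u)}\,dy' = G(u(y);u_0,\lam),
\]
obtained by multiplying \eqref{3.24b} by $u_y=v/c_2(u)$ and integrating from $-\infty$. This identity pins $\gamma_-$ in the connected component of $\{G(\cdot;u_0,\lam)\ge 0\}$ containing $u_0$ and bounds $v$ in terms of $u$, giving compactness of the trajectory. The strict dissipation term $\alpha b/(c_1c_2)>0$ excludes periodic orbits (by a Bendixson--Dulac argument with the weight $1/c_2(u)$ on the divergence, or equivalently by monotonicity of the dissipation integral on any putative cycle). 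Hence the $\omega$-limit set of $\gamma_-$ is a single equilibrium; $u_0$ is excluded because $\gamma_-$ is its unstable manifold, and $u_2$ has been excluded above, so $\omega(\gamma_-)=\{u_1\}$ and $V_-(\alpha)=0$.

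Uniqueness follows because any classical traveling wave must leave $(u_0,0)$ along the one-dimensional unstable manifold pointing into $\{u<u_0\}$, which is precisely $\gamma_-$ up to translation in $y$. For the second assertion, when $\alpha\ge\an(u_0)$, the convention preceding the theorem gives $\Lamalpha(u_0)=\lna(u_0)$, so the argument above covers $\lam\in(\lna(u_0),f'(u_0)]$; the endpoint $\lam=\lna(u_0)$, where $u_1$ and $u_2$ collapse into $\vn(u_0)$, is recovered by a continuity/limiting argument as $\lam\to\lna(u_0)^+$. The main obstacle I anticipate is the rigorous exclusion of recurrent behavior when $u_1$ is a stable spiral: the energy identity supplies boundedness cheaply, but carefully controlling the successive crossings of $\{v=0\}$ by $\gamma_-$ past $u_1$, and patching this with the Poincar\'e--Bendixson step, is the delicate part of the argument.
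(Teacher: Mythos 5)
Your overall strategy --- shoot from the saddle $(u_0,0)$ along its unstable manifold into $\{u<u_0,\,v<0\}$, rule out the heteroclinic to $(u_2,0)$, and then force convergence to the stable point $(u_1,0)$ --- is close in spirit to the paper's, and several of your ingredients (the inequality $V_-(\alpha)>V_+(\alpha)$ obtained from Theorem~\ref{Theorem4.1} together with Remark~\ref{Remark4.4}, the exclusion of periodic orbits by the dissipation term) are sound. There is, however, a genuine gap at the step where you claim compactness of the forward orbit. The energy identity confines the trajectory to $\{v^2/2\le G(\cdot;u_0,\lam)\}$, but by Theorem~\ref{Theorem3.7} and \eqref{3.25i}--\eqref{3.25ii}, for every $\lam\in\bigl(\Lamalpha(u_0),\lz(u_0)\bigr]$ one has $G(u_2;u_0,\lam)\ge 0$ while $G_u'<0$ on $(-\infty,u_2)$, so that $G(\cdot;u_0,\lam)\ge 0$ on the \emph{entire} half-line $(-\infty,u_0]$. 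The connected component of $\{G\ge 0\}$ containing $u_0$ is therefore unbounded to the left, the identity yields no lower bound on $u$, and Poincar\'e--Bendixson cannot be invoked: a priori the orbit could run off to $u\to-\infty$ with $v<0$ (note that once $u<u_2$ it can no longer touch the $u$-axis from below, since \eqref{3.24b} gives $c_2 v_y=G_u'<0$ there). Merely knowing that $\gamma_-$ is not the heteroclinic to $u_2$ does not prevent this escape, and this is precisely the regime the theorem is mainly about, since $\Lamalpha(u_0)<\lz(u_0)$ for every $\alpha>0$.

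The missing confinement is exactly what the paper supplies by a barrier argument: it compares $v_-$ with the nonclassical trajectory associated with the \emph{different} speed $\Lamalpha(u_0)$, shows via the eigenvalue inequality $\mub(u_0;\lam,\alpha)<\mub(u_0;\Lamalpha(u_0),\alpha)$ and the sign identity \eqref{5.2} that the two curves cannot cross, and concludes that $v_-$ must hit the $u$-axis at some $u_3$ with $u_2<\vfalpha(u_0)<u_3<u_1$, i.e., in the region where $G_u'>0$; only then do boundedness and convergence to $(u_1,0)$ follow. Your version can be repaired without importing that barrier: since $V_+(\alpha)<V_-(\alpha)<0$ and distinct orbits of the autonomous system \eqref{3.24a}--\eqref{3.24b} cannot intersect, $\gamma_-$ is trapped between the stable-manifold branch $v_+$ of $(u_2,0)$ and the $u$-axis for $u\in(u_2,u_1)$; as $v_+\to 0$ when $u\to u_2^+$ and $\gamma_-$ cannot converge to the saddle $(u_2,0)$ without coinciding with $v_+$, it must cross the axis at some $u_3\in(u_2,u_1)$. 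Only after this first turning point does your Lyapunov/Poincar\'e--Bendixson step apply, the function $v^2/2-G$ then confining the orbit to a compact region around $u_1$. As written, the proposal jumps over this step, so it is incomplete; the case $\lam\in\bigl(\lz(u_0),f'(u_0)\bigr]$, where $G(u_2;u_0,\lam)<0$ does create a genuine energy barrier, is the only one your compactness claim actually covers.
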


\begin{proof} We first treat the case 
$\alpha\leq \an(u_0)$ and 
$\lam \in \bigl(\Lamalpha(u_0), f'(u_0)\bigr]$. 
Consider the curve $u \mapsto v_-(u; \lam,\alpha)$
defined on $[u_1, u_0]$ that was introduced earlier in the proof of 
Theorem~\ref{Theorem4.1}. 
We have either $v_-(u_1; \lam,\alpha)=0$ and the proof is completed, or
else $v_-(u_1; \lam,\alpha)<0$. In the latter case, the function $v_-$
is a solution of \eqref{4.5} that extends further on the left-hand side of
$u_-$ in the phase plane. On
the other hand, this curve cannot cross the nonclassical trajectory 
$u \mapsto v(u)$ connecting $u_-=u_0$ to $u_+=\vfalpha(u_0)$. 
Indeed, by Lemma~\ref{Lemma3.6} we have 
$$
\mub(u_0; \lam, \alpha) < \mub(u_0; \Lamalpha(u_0), \alpha). 
$$
If the two curves would cross, there would exist $u^* \in
(\vfalpha(u_0), u_1)$ 
such that 
$$
v(u^*) = v_-(u^*) \quad \text{ and } \quad 
{dv\over du}(u^*)\leq {dv_-\over du}(u^*).
$$ 
By comparing the equations \eqref{4.5} satisfied by these two
trajectories we get 
\be
v(u^*) \, \Big({dv\over du}(u^*)- {dv_-\over du}(u^*)\Big)
=
\bigl(\lam-\Lamalpha(u_0)\bigr) \, (u^*-u_0) \, {c_2(u^*) \over 
c_1(u^*)}.
\label{5.2}
\ee
This leads to a contradiction since the right-hand side of \eqref{5.2} is positive 
while the left-hand side is negative. We conclude that the function $v_-$ must 
cross the $u$-axis at some point $u_3$ with $u_2 < \vfalpha(u_0) < u_3 <u_1$. 
The curve $u \mapsto v_-(u, \lam,\alpha)$ on the interval $[u_3, u_0]$
corresponds to a solution $y \mapsto u(y)$ 
in some interval $(-\infty, y_3]$ with $u_y(y_3)=0$ and 
\be
u_{yy} (y_3) = {g\bigl(u(y_3),\lam\bigr) - g\bigl(u_0,\lam\bigr)
\over c_1(u(y_3)) \, c_2(u(y_3))} 
= {G_u'(u_3; u_0, \lam) \over c_2(u_3)^2},
\label{5.3}
\ee
which is positive by Theorem~\ref{Theorem3.7}. Thus $u_{yy}(y_3) > 0$ and necessarily 
$u(y) > u_3$ for $y > y_3$. Indeed, assume that there exists $y_4 > y_3$,
such that $u(y_4) = u(y_3) = u_3$. Then, multiplying \eqref{3.24b} by $v_-/c_2$ and
integrating over $[y_3,y_4]$, we obtain  
$$
{1 \over 2 } \, v_-^2(y_4) + \alpha \, \int_{y_3}^{y_4} {b(u) \over c_1(u) \, 
c_2(u)} \,  v_-^2 \, dy 
= G(u_3; u_0, \lam) - G(u_3; u_0, \lam) = 0. 
$$
This would means that $u(y)=u_3$ for all $y$, which is excluded since
$u_-=u_1$.

Now, since $u \leq u_0$ we see that $u$ is bounded. Finally, by 
integration over the interval $(-\infty, y]$ we obtain 
$$
{1 \over 2 } \, v_-^2(y) + \alpha \, \int_{-\infty}^y {b(u) \over c_1(u) \, 
c_2(u)} \, v_-^2 \, dy = G(u(y)) - G(u_0), 
$$
which implies that $v$ is bounded and that the function
$u$ is defined on the whole real line $\RR$. When 
$y \to +\infty$ the trajectory $(u,v)$ converges to a critical point 
which can only be $(u_1,0)$.


Consider now the case $\alpha> \an(u_0)$. 
The proof is essentially same as the one given above. However, we
replace the 
nonclassical trajectory with the curve $u \mapsto v_+(u)$ defined on
the interval $[u_2, u_1]$. For each $\lam$ fixed in
$(\lna(u_0), f'(u_0))$ (since $\alpha> \an(u_0)$) 
and thanks to Remark~4.4, the function, 
$W = V_+ - V_-$ (defined in the proof of Theorem~\ref{Theorem4.1},
with $v_-(u;\lam, \alpha)$ and $v_+(u;\lam, \alpha)$ and extended to  
$\lam \in (f'(u_2), f'(u_0))$)
satisfies $W(\alpha)<0$. On the left-hand side of $u_1$,
with the same argument as in the first part above, we can prove 
that the extension of $v_-$ does not intersect $v_+$ 
and must converge to $(u_1, 0)$. Finally, the case $\lam = \lna(u_0)$ is reached 
by continuity.  This completes the proof of Theorem~\ref{Theorem5.1}. 
\end{proof}


\begin{theorem}
\label{Theorem5.2}
If $\lna(u_0)<\lam<\Lamalpha(u_0)$ there is no traveling
wave connecting $u_-=u_0$ to $u_+=u_1$. 
\end{theorem}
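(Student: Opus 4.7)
The plan is to argue by contradiction, playing off the candidate classical trajectory at speed $\lam$ against the nonclassical saddle-to-saddle connection at the critical speed $\Lamalpha(u_0)$ supplied by Theorem~\ref{Theorem4.1}. The case $\alpha \geq \an(u_0)$ is vacuous, since our extension of $\Lamalpha$ gives $\Lamalpha(u_0) = \lna(u_0)$ whereas $\lam > \lna(u_0)$ by hypothesis. So assume $\alpha \in \bigl(0, \an(u_0)\bigr)$. Then $\Lamalpha(u_0) \in \bigl(\lna(u_0), \lz(u_0)\bigr)$ and, by \eqref{3.17} together with $\vfalpha(u_0) \leq \vn(u_0)$, one has $\vfalpha(u_0) < \vn(u_0)$. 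Let $u \mapsto v^*(u)$ denote the associated nonclassical trajectory, defined on $[\vfalpha(u_0), u_0]$ and strictly negative on the open interval.

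Suppose, for contradiction, that at some $\lam \in \bigl(\lna(u_0), \Lamalpha(u_0)\bigr)$ there is a classical trajectory connecting $u_0$ to $u_1 := u_1(\lam)$. By Lemma~\ref{Lemma4.3} it is the graph of a function $\tilde v(u)$ on $[u_1, u_0]$ with $\tilde v(u_0) = \tilde v(u_1) = 0$ and $\tilde v < 0$ in between. Since $u_1 > \vn(u_0) > \vfalpha(u_0)$ by \eqref{3.19}, both $\tilde v$ and $v^*$ are defined on $[u_1, u_0]$ and vanish at $u_0$. Lemma~\ref{Lemma3.6} gives $\mub(u_0; \lam, \alpha) > \mub(u_0; \Lamalpha(u_0), \alpha) > 0$, and the tangential identity \eqref{4.2} at the saddle $u_0$ therefore yields $\tilde v(u) < v^*(u) < 0$ in a left neighborhood of $u_0$. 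At the other end, $\tilde v(u_1) = 0 > v^*(u_1)$ since $u_1 > \vfalpha(u_0)$. By continuity there is a largest $u^\dagger \in (u_1, u_0)$ with $\tilde v(u^\dagger) = v^*(u^\dagger) =: v_0 < 0$, and since $\tilde v - v^* < 0$ on $(u^\dagger, u_0)$ one has $\tfrac{d\tilde v}{du}(u^\dagger) \leq \tfrac{dv^*}{du}(u^\dagger)$.

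Writing \eqref{4.5} for $\tilde v$ at speed $\lam$ and for $v^*$ at speed $\Lamalpha(u_0)$ and subtracting at $u^\dagger$ produces
\[
v_0 \, \Big(\frac{d\tilde v}{du}(u^\dagger) - \frac{dv^*}{du}(u^\dagger)\Big)
= G_u'(u^\dagger; u_0, \lam) - G_u'(u^\dagger; u_0, \Lamalpha(u_0)).
\]
The left-hand side is a product of a strictly negative number and a non-positive one, hence non-negative. For the right-hand side, differentiating the definition of $G$ gives $\tfrac{\partial}{\partial \lam} G_u'(u; u_0, \lam) = -(u-u_0)\, c_2(u)/c_1(u)$, which is strictly positive for $u < u_0$; since $\lam < \Lamalpha(u_0)$ and $u^\dagger < u_0$, the right-hand side is strictly negative. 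This contradiction proves the theorem.

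The main obstacle is to guarantee that the comparison curve $v^*$ lies strictly between $\tilde v$ and zero on the relevant interval, forcing a crossing inside $(u_1, u_0)$. This uses two strict separations: the strict monotonicity of $\mub$ in $\lam$ (Lemma~\ref{Lemma3.6}) near the saddle $u_0$, and the ordering $\vfalpha(u_0) < \vn(u_0) < u_1$, which in turn relies on the standing assumption $\alpha < \an(u_0)$ so that \eqref{3.17} yields strict inequality $\vfalpha(u_0) < \vn(u_0)$.
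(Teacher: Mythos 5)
Your comparison identity at a crossing point is exactly the right tool (it is the same computation as in \eqref{5.2} and \eqref{4.8}), but there is a genuine gap at the step where you force the crossing to exist: you assert that the hypothetical traveling wave is the graph of a function $\tilde v$ on $[u_1,u_0]$ with $\tilde v(u_1)=0$, citing Lemma~\ref{Lemma4.3}. That lemma only gives monotonicity up to the \emph{first} time the trajectory reaches the line $\{u=u_1\}$, at a point $(u_1,V_-(\alpha))$ with $V_-(\alpha)\le 0$; it does not say $V_-(\alpha)=0$. Since $(u_1,0)$ is a \emph{stable} equilibrium (node or spiral), a trajectory connecting $u_0$ to $u_1$ will in general overshoot, pass under $(u_1,0)$ with $v<0$, and only later turn around (or spiral) into the critical point --- this is precisely what happens in the proof of Theorem~\ref{Theorem5.1}, where the classical orbit crosses the $u$-axis at some $u_3<u_1$ before converging to $(u_1,0)$. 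Your argument therefore only excludes the monotone, non-generic mode of connection; it says nothing about non-monotone connections, which are the ones that actually occur.

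The correct use of your identity runs in the opposite direction: since $\tilde v<v^*<0$ near $u_0$ (by the eigenvalue comparison, as you note), the identity shows the two curves can \emph{never} cross, so $\tilde v$ stays strictly below $v^*$ on all of $[\vfalpha(u_0),u_0]$; in particular $\tilde v(u_1)<v^*(u_1)<0$ and $\tilde v(\vfalpha(u_0))<v^*(\vfalpha(u_0))=0$. The remaining --- and essential --- step, which is missing from your proposal, is to follow the orbit into the region $u<\vfalpha(u_0)<u_2(\lam)$ and show it can never return to the $u$-axis: at a first return point $y_3$ one would need $v(y_3)=0$ with $v_y(y_3)\ge 0$, whereas \eqref{3.24b} gives $v_y(y_3)=G_u'(u(y_3);u_0,\lam)/c_2(u(y_3))<0$ because $G_u'<0$ for $u<u_2$ by Theorem~\ref{Theorem3.7}. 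Hence the orbit keeps $v<0$, $u$ decreases forever, and it cannot converge to $(u_1,0)$ even non-monotonically. Without this last argument the theorem is not proved.
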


\begin{proof}
Assume that there exists  a traveling wave connecting $u_0$ to $u_1$.
As in Lemma~\ref{Lemma4.2}, we prove easily that such a curve must approach $(u_0, 0)$ 
from the quadrant $Q_1$ and coincide with the function $v_-$ on the
interval $[u_1, u_0]$. 
On the other hand, as in the proof of Theorem~\ref{Theorem5.1}, we see that this curve
does not cross the nonclassical trajectories. On the other hand, we have 
$$
\mub(u_0; \lam, \alpha) \geq \mub\bigl(u_0; \Lamalpha(u_0),\alpha\bigr), 
$$ 
thus, the classical curve remains ``under" the nonclassical one. So 
we have 
$$
{v_- \bigl(\vfalpha(u_0)\bigr) < v\bigl(\vfalpha(u_0)\bigr)},
$$
where $u \mapsto (u, v(u))$ denotes the nonclassical trajectory. 
Assume now that the curve $(u, v_-(u))$ meets the $u$-axis for the first time 
at some point $(u_3, 0)$ with $u_3 < \vfalpha(u_0) < u_2$. 
The previous curve defined on $[u_3, u_0]$ corresponds to a solution $y \mapsto u(y)$
defined on some interval $(-\infty, y_3]$ with $u_y(y_3)=0$ and 
$u_{yy}(y_3) \geq 0$. 
Thus $v_y(y_3)$ satisfies \eqref{5.3} and is negative (Lemma~\ref{Lemma4.3}). 
This implies that $u_{yy}(y_3)<0$ which is a contradiction. 
Finally, the trajectory remains under the $u$-axis for $u<u_2$,
and cannot converge to any critical point.
\end{proof} 


According to Theorem~\ref{Theorem5.1}  
the kinetic function can now be extended to all values of $\alpha$ by setting 
\be
\vfalpha(u_0) = \vn(u_0),  \quad \alpha \geq \an(u_0). 
\label{5.4}
\ee
Finally we have:

\begin{theorem}
\label{Theorem5.3} 
{\rm (Monotonicity of the kinetic function.)} For each $\alpha>0$ 
the mapping $u_0 \mapsto \vfalpha(u_0)$ is decreasing.
\end{theorem}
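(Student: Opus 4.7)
The plan is to combine the monotonicity of the critical diffusion-dispersion ratio $A$ (Theorem~\ref{Theorem4.5}) with the monotonicity of $\vn$, $\vfz$, and of the threshold $\an$, splitting into cases according to whether each of $u_0$ and the larger point $u_0^*$ lies in the nonclassical regime $\alpha<\an(\cdot)$ or in the classical regime $\alpha\geq\an(\cdot)$. By the entirely analogous argument for negative states (where $\an$ is decreasing) and the trivial case of opposite signs, it suffices to take $0<u_0<u_0^*$ and prove $\vfalpha(u_0)>\vfalpha(u_0^*)$.

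Since $\an$ is increasing on $(0,+\infty)$, one has $\an(u_0)<\an(u_0^*)$. If $\alpha\geq\an(u_0^*)$, then by \eqref{5.4} both $\vfalpha(u_0)=\vn(u_0)$ and $\vfalpha(u_0^*)=\vn(u_0^*)$, and the conclusion follows from strict monotonicity of $\vn$. If $\an(u_0)\leq\alpha<\an(u_0^*)$, then $\vfalpha(u_0)=\vn(u_0)$, while \eqref{3.13} applied in the nonclassical regime at $u_0^*$ gives $\vfalpha(u_0^*)<\vn(u_0^*)<\vn(u_0)=\vfalpha(u_0)$.

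The essential case is $\alpha<\an(u_0)<\an(u_0^*)$, in which both trajectories are genuinely nonclassical. Setting $u_2:=\vfalpha(u_0)$ and $u_2^*:=\vfalpha(u_0^*)$, the definition of the kinetic function (cf.\ \eqref{5.1}) gives $A(u_0,u_2)=A(u_0^*,u_2^*)=\alpha$. Arguing by contradiction, suppose $u_2\leq u_2^*$. The pair $(u_0^*,u_2)$ then lies in the admissible domain of $A$ described in Theorem~\ref{Theorem4.5}, because $u_2\leq u_2^*<\vn(u_0^*)$ and $u_2\geq\vfz(u_0)>\vfz(u_0^*)$, using that $\vfz$ is decreasing. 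Theorem~\ref{Theorem4.5}(2) then furnishes the strict inequality $A(u_0,u_2)<A(u_0^*,u_2)$ from strict monotonicity of $A$ in its first argument, while Theorem~\ref{Theorem4.5}(1) yields $A(u_0^*,u_2)\leq A(u_0^*,u_2^*)$ from monotonicity in the second; chaining these with $A(u_0,u_2)=A(u_0^*,u_2^*)=\alpha$ produces $\alpha<\alpha$, a contradiction, so in fact $u_2>u_2^*$.

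The only real subtlety lies in verifying, at the start of the contradiction argument, that the candidate pair $(u_0^*,u_2)$ actually lies in the domain of~$A$; once this domain check is in hand, the conclusion is a short chain of inequalities assembled from Theorem~\ref{Theorem4.5} together with \eqref{3.13} and \eqref{5.4}.
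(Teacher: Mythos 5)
Your proof is correct and rests on the same key ingredient as the paper's, namely the monotonicity of the critical ratio $A(u_0,u_2)$ in both arguments from Theorem~\ref{Theorem4.5}, combined with \eqref{3.13}, \eqref{5.4} and the monotonicity of $\vn$ for the classical regime. The only cosmetic difference is that you run the nonclassical case as a global contradiction on $u_2\leq u_2^*$ with an explicit domain check for $(u_0^*,u_2)$, whereas the paper localizes to $u_0^*$ near $u_0$ and phrases the same two monotonicity facts through the speed function $\Lamalpha$; both are valid.
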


\begin{proof}
Fix $u_0>0$, $\alpha>0$, $\lam = \Lamalpha(u_0)$ and $u_2= \vfalpha(u_0)$. First suppose that $\alpha \geq \an(u_0)$.
Then, for all $u_0^* >u_0$, since $\vn$ is known to be strictly 
monotone, it is clear that
$$
\vfalpha(u_0^*) \leq  \vn(u_0^*) < \vn(u_0)=\vfalpha(u_0).
$$
Suppose now that $\alpha < \an(u_0)$.
Then,
for
$u_0^* >u_0$ in a neighborhood of $u_0$, the speed 
$\lam^*={{f(u_0^*)-f(u_2)}\over {u_0^*-u_2}}$ 
satisfies $\lam^*\in \bigl(\lna(u_0^*), \lz(u_0^*)\bigr)$. Then, there 
exists a nonclassical traveling wave connecting $u_0^*$ to $u_2$ 
for some $\alpha^*= A(u_0^*, u_2)$. 
The second statement in Theorem~\ref{Theorem4.5} gives $\alpha^*>\alpha$. 
Since the function $\Lamalpha$ is decreasing (by the first statement in Theorem~\ref{Theorem4.5})
we have 
$\Lambda_{\alpha_*}(u_0^*)<\Lamalpha(u_0^*)$
and thus $\vfalpha(u_0^*) < u_2 = \vfalpha(u_0)$
and the proof of 
Theorem~\ref{Theorem5.3} is completed. 
\end{proof}

\begin{proof}[Proof of Theorem~\ref{Theorem3.3}] 
Section~5 provides us with the existence of nonclassical trajectories, 
while Theorems~\ref{Theorem5.1} and \ref{Theorem5.2} are concerned with classical trajectories. 
These results prove that the shock set is given by \eqref{3.14}. 
By standard theorems on solutions of ordinary differential equations
the kinetic function is smooth in the region  
$\bigl\{ \alpha \leq \an(u_0) \bigr\}$ while it coincides with the 
(smooth) function $\vn$ in the region $\bigl\{ \alpha \geq \an(u_0) \bigr\}$. 
Additionally, by construction the kinetic function is continuous 
along $\alpha = \an(u_0)$. This proves that $\vf$ is 
Lipschitz continuous on each compact interval. 
On the other hand, the monotonicity of the kinetic function
is provided by Theorem~\ref{Theorem5.3}. 
The asymptotic behavior was the subject of Theorem~\ref{Theorem4.7}. 
\end{proof}

\begin{remark}
\label{Remark5.4} 
To a large extend the techniques presented 
in this paper extend to {\sl systems of equations,\/} in 
particular to a classical model of elastodynamics and phase transitions.   
The corresponding traveling wave solutions $(v,w)= (v(y), w(y))$ must solve 
$$
\aligned 
- s & \, v_y - \Sigma\bigl( w,w_y, w_{yy} \bigr)_y = \bigl( \mu(w) \, v_y \bigr)_y, 
\\
s & \, w_y + v_y = 0,
\endaligned 
$$ 
where $s$ denotes the speed of the traveling wave, 
$\Sigma$ is the total stress function, and $\mu(w)$ is the viscosity 
coefficient. When $\Sigma$ is given,   
some integration with respect to $y$ we arrive at 
$$
\aligned 
- s & \, ( v - v_- ) - \sigma(w) + \sigma(w_-) - \mu(w) \, v_y
= {\lam'(w) \over 2} \, w_y^2 - \bigl( \lam(w) \, w_y \bigr)_y,  
\\
s & \, (w- w_-) + v - v_- = 0,
\endaligned 
$$ 
where $(v_-, w_-)$ denotes the upper left-hand limit and $\lam(w)$ 
the capillarity coefficient. Using the second equation above we can 
eliminate the unknown $v(y)$, namely 
\be
\lam(w)^{1/2} \, \bigl( \lam(w)^{1/2} \, w_y \bigr)_y
+ \mu(w) \, v_y 
= s^2  \, ( w - w_- ) - \sigma(w) + \sigma(w_-),  
\label{5.5}
\ee
which has precisely the structure of the equation \eqref{3.4} studied in the 
present paper, so that  
most of our results extend to the equation \eqref{5.5}.  
\end{remark}


\section*{Acknowledgments}  

This paper is an updated version of some material issued from the book \cite{LeFloch-book}
and was written for the UFRJ Winter School on Nonlinear Analysis, held in Rio de Janeiro in August 2009. 
I am very grateful to M.F. Elbert, W. Neves, and A. Pazoto 
for their invitation and hospitality, and for giving me the opportunity to 
give a short-course. The author was supported by a DFG-CNRS collaborative grant between France and Germany
on ``Micro-Macro Modeling and Simulation of Liquid-Vapor Flows'', as well as by 
the Centre National de la Recherche Scientifique (CNRS) and 
the Agence Nationale de la Recherche (ANR) via the grant 06-2-134423. 


\end{document}